    \newcommand{\Z}{\mathbb{Z}} 
    \newcommand{\LG}{\mathfrak{g}}
    \newcommand{\LT}{\mathfrak{t}}
    \newcommand{\End}{\operatorname{End}} 
    \newcommand{\Hom}{\operatorname{Hom}}
    \renewcommand{\Hom}{\operatorname{Hom}} 
    \newcommand{\uHom}{\underline{\operatorname{Hom}}} 
    \renewcommand{\End}{\operatorname{End}} 
    \newcommand{\uEnd}{\underline{\operatorname{End}}} 
    \newcommand{\QCoh}{\operatorname{QCoh}}
    \newcommand{\IndCoh}{\operatorname{IndCoh}}
    \newcommand{\C}{\mathcal{C}} 
    \renewcommand{\O}{\mathcal{O}} 
    \newcommand{\F}{\mathcal{F}} 
    \newcommand{\D}{\mathcal{D}} 
    \renewcommand{\\}{\backslash}
    \theoremstyle{definition}
    \newtheorem{Theorem}{Theorem}[section]
    \newtheorem{Conjecture}[Theorem]{Conjecture}
    \newtheorem{Corollary}[Theorem]{Corollary}
    \newtheorem{Definition}[Theorem]{Definition}
    \newtheorem{Proposition}[Theorem]{Proposition}
    \newtheorem{Remark}[Theorem]{Remark}
    \newtheorem{Example}[Theorem]{Example}
    \newtheorem{Lemma}[Theorem]{Lemma}
    \title{Classification of Nondegenerate $G$-Categories}
    \author[Tom Gannon]{Tom Gannon \\ Appendix with Germ\'an Stefanich}
    \newcommand{\CatTwTw}{\mathcal{D}(N\backslash G/N)^{\text{(}T \times T\text{, w)}}_{\text{nondeg}}}
    \newcommand{\CatTwTwplus}{\mathcal{D}(N\backslash G/N)^{\text{(}T \times T\text{, w), +}}_{\text{nondeg}}}
    \newcommand{\Hpsi}{\mathcal{H}_{\psi}}
    \newcommand{\AvN}{\text{Av}_*^N}
    \newcommand{\Avpsi}{\text{Av}_{!}^{\psi}}
    \newcommand{\Symt}{\text{Sym(}\mathfrak{t}\text{)}}
    \newcommand{\LTd}{\LT^{\ast}}
    \newcommand{\Wext}{\tilde{W}^{\text{aff}}}
    \newcommand{\Waff}{W^{\text{aff}}}
    \newcommand{\IndCohx}{\text{IndCoh}(\LTd \times_{\LTd\sslash\Wext} \LTd)}
    \newcommand{\AvNShiftedAndLifted}{\widetilde{\mathsf{A}}_*}
    \newcommand{\DGCatContk}{\text{DGCat}^k_{\text{cont}}}
\newcommand{\AvNshifted}{\AvN[\text{dim}(N)]}
\newcommand{\Avpsishifted}{\Avpsi[-\text{dim}(N)]}
\newcommand{\indsch}{\mathcal{X}}
\newcommand{\characterlatticeforT}{X^{\bullet}(T)}
\newcommand{\Spec}{\text{Spec}}
\newcommand{\FourierMukai}{\text{FMuk}}
\newcommand{\generalstacktoGITquotientmap}{\phi}
\newcommand{\Hpsiliteral}{\mathcal{H}_{\psi}}
\newcommand{\Res}{\text{Res}}
\newcommand{\parabolicrestrictionLIFTED}{\text{WRes}}
\newcommand{\horocycleFunctor}{\text{hc}}
\newcommand{\quotientmapforcoarsequotient}{\overline{s}}
\newcommand{\quotientmaptostackquotient}{q}
\newcommand{\terminalmapfromC}{\alpha}
\newcommand{\tildeV}{\tilde{\mathbb{V}}}
\newtheorem{Notation}[Theorem]{Notation}
\DeclareMathOperator{\Groth}{Groth}
\DeclareMathOperator{\DGroth}{DGroth}
\DeclareMathOperator{\Funct}{Funct}
\newcommand{\ccal}{{\mathcal{C}}}
\newcommand{\dcal}{{\mathcal{D}}}
\newcommand{\proj}{{\normalfont \text{proj}}}
\newcommand{\op}{{\normalfont \text{op}}}
\begin{document}
    \maketitle
    \begin{abstract}
We classify a \lq dense open\rq{} subset of categories with an action of a reductive group, which we call nondegenerate categories, entirely in terms of the root datum of the group. As an application of our methods, we also: 

\begin{enumerate}
    \item Upgrade an equivalence of Ginzburg and Lonergan, which identifies the category of bi-Whittaker $\D$-modules on a reductive group with the category of $\Wext$-equivariant sheaves on a dual Cartan subalgebra $\LTd$ which descend to the coarse quotient $\mathfrak{t}^*\sslash \Wext$, to a monoidal equivalence (showing that the Whittaker-Hecke category is symmetric monoidal and answering a question of Drinfeld) and
    \item Show the parabolic restriction of a very central sheaf acquires a Weyl group equivariant structure such that the associated equivariant sheaf descends to the coarse quotient $\mathfrak{t}^*\sslash \Wext$, proving a modified conjecture of Ben-Zvi--Gunningham.
\end{enumerate}

\textit{MSC 2020 Classification}: 22E57, 17B10
    \end{abstract}
    
\tableofcontents 

    \section{Introduction}
    \subsection{Main Results}\label{SurveyOfResults}
    Much of modern geometric representation theory can be interpreted as the study of groups acting on \textit{categories} and the natural symmetries that the various invariants obtain; we will see specific examples of this in \cref{MotivationAndSurvey}. Therefore it is of natural interest to study the class of all categories with an action of a split reductive group $G$. Our main theorem in this paper provides a \lq coherent\rq{} description for a localized class of $G$-categories known as \textit{nondegenerate categories}.
    
    \begin{Definition}\label{Nondegenerate G Category Definition for Simply Connected Group}
    Assume $G$ is simply connected. A \textit{nondegenerate} $G$-category is a $G$-category $\C$ such that for every rank one parabolic $P_{\alpha}$, the invariants $\C^{[P_{\alpha}, P_{\alpha}]}$ vanish. 
    \end{Definition}

    We study some of the basic properties of nondegenerate $G$-categories in the companion paper \cite{GannonNew}.  For example, we argue that any $G$-category $\C$ admits a functor $\C \to \C_{\text{nondeg}}$ that, informally speaking, has the same properties as the map $j^!: \D(X) \to \D(U)$ for an open subset $j: U \xhookrightarrow{} X$. This is made precise in \cite[Section 2.4.5]{GannonNew}.
    
Our main result in this paper states that the 2-category of nondegenerate $G$-categories admits a coherent description as modules over sheaves on an ind-scheme $\Gamma_{\Wext}$ defined only in terms of the action of the extended affine Weyl group $\Wext := \characterlatticeforT \rtimes W$ on $\LTd$. As ind-schemes, we have $\Gamma_{\Wext} \simeq \pi_1(G^{\vee}) \times \Gamma_{\Waff}$, where $G^{\vee}$ denotes the Langlands dual group and $\Gamma_{\Waff}$ denotes the union of graphs in $\LTd \times \LTd$ given by the $\Waff$-action on $\LTd$. One can identify $\Gamma_{\Wext} \simeq \LTd \times_{\LTd\sslash\Wext} \LTd$ for a certain prestack $\LTd\sslash\Wext$ known as the \textit{coarse quotient}, a main object of study of \cite{GannonDescentToTheCoarseQuotientForPseudoreflectionAndAffineWeylGroups}. This implies that one can use the convolution formalism of \cite[Section 5.5]{GaRoI} to equip $\IndCoh(\Gamma_{\Wext})$ with a monoidal structure. Our main result can be summarized as follows:

\begin{Theorem}\label{NondegGCatsTheorem}
There is an equivalence of 2-categories 
\[G\text{-mod}_{\text{nondeg}} \simeq \IndCoh(\Gamma_{\Wext})\text{-mod}\]

\noindent where the left hand side denotes the 2-category of all nondegenerate $G$-categories.
\end{Theorem}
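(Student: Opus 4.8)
The plan is to build the equivalence by a sequence of reductions, each replacing the group $G$ by a smaller object whose associated category we understand explicitly, and to identify the endomorphisms of a suitable generating object with $\IndCoh(\Gamma_{\Wext})$ as a monoidal category. The starting point is that a nondegenerate $G$-category is determined by its bi-Whittaker (or, at an intermediate stage, $N$-invariant) part: one first shows that the horocycle-type functor $\C \mapsto \C^{N,\psi}$, or more precisely the functor to $\D(N\backslash G/N)_{\text{nondeg}}$-modules, is an equivalence onto nondegenerate $G$-categories. This is the categorified analogue of the statement that a nondegenerate representation of $G$ is controlled by its image under averaging against a generic character of $N$; the nondegeneracy hypothesis is exactly what kills the failure of this functor to be fully faithful, since the obstruction lives on the rank-one parabolic strata $[P_\alpha,P_\alpha]$ that are assumed to act trivially. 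Concretely, I would realize $G\text{-mod}_{\text{nondeg}}$ as modules in $\DGCatContk$ over the monoidal category $\mathcal{H}_\psi := \D(N^-_\psi\backslash G/N_\psi)$ of bi-Whittaker $\D$-modules, after checking that $\mathcal{H}_\psi$ acts with the required faithfulness on the nondegenerate locus.

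The second step is to identify $\mathcal{H}_\psi$ monoidally with $\IndCoh(\Gamma_{\Wext})$. This is precisely the upgrade of the Ginzburg--Lonergan equivalence promised in item (1) of the abstract: the underlying equivalence of categories $\mathcal{H}_\psi \simeq \QCoh$ of $\Wext$-equivariant sheaves on $\LTd$ descending to the coarse quotient is due to Ginzburg and Lonergan, and one must promote it to a monoidal equivalence where the target carries the convolution monoidal structure on $\IndCoh(\LTd\times_{\LTd\sslash\Wext}\LTd) \simeq \IndCoh(\Gamma_{\Wext})$ coming from \cite[Section 5.5]{GaRoI}. The key computation here is that the convolution unit and multiplication on the bi-Whittaker side — induced by the correspondence $N^-_\psi\backslash G/N_\psi \xleftarrow{} N^-_\psi\backslash G\times^N G/N_\psi \xrightarrow{} N^-_\psi\backslash G/N_\psi$ — match the fiber-product groupoid structure on $\Gamma_{\Wext}$ under the identification $\LTd\sslash\Wext \simeq \mathfrak{t}^*\sslash\Wext$ of \cite{GannonDescentToTheCoarseQuotientForPseudoreflectionAndAffineWeylGroups}. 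I would verify this by reducing to rank one (where $\Waff$ for a simple factor is an infinite dihedral group and the computation is with $\mathfrak{sl}_2$, resp.\ $\mathfrak{pgl}_2$, bi-Whittaker $\D$-modules), together with the splitting $\Gamma_{\Wext}\simeq \pi_1(G^\vee)\times\Gamma_{\Waff}$ to handle the lattice part, which corresponds on the automorphic side to the center/component-group twisting.

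Combining the two steps gives $G\text{-mod}_{\text{nondeg}} \simeq \mathcal{H}_\psi\text{-mod} \simeq \IndCoh(\Gamma_{\Wext})\text{-mod}$, which is the desired equivalence. I expect the main obstacle to be the first step — establishing that passing to the bi-Whittaker part is an equivalence of $2$-categories onto the nondegenerate locus, rather than merely fully faithful or essentially surjective in isolation. The delicate point is a Barr--Beck-type argument: one must show the relevant comonad (or monad) on $\mathcal{H}_\psi\text{-mod}$ is computed correctly and that the adjunction is comonadic, and the nondegeneracy condition has to be shown to be exactly equivalent to the hypotheses of the Barr--Beck--Lurie theorem in this setting. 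This is where the localization functor $\C\to\C_{\text{nondeg}}$ of \cref{LocalizationDefinition} and the vanishing of $\C^{[P_\alpha,P_\alpha]}$ do the real work, and where the geometry of the big-cell stratification of $N^-_\psi\backslash G/N_\psi$ must be controlled carefully, including the behavior at the boundary where the Whittaker averaging interacts with the parabolic induction functors $\parabolicrestriction$ that feature in item (2) of the abstract.
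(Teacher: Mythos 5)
There is a genuine gap, and it occurs at both steps of your reduction. Your second step asserts a monoidal equivalence $\Hpsi \simeq \IndCoh(\Gamma_{\Wext})$, but this cannot hold: $\Hpsi$ is \emph{symmetric} monoidal (\cref{Whittaker Hecke Category is Symmetric Monoidal}), whereas $\IndCoh(\Gamma_{\Wext})$ with its convolution structure is the groupoid algebra of a nontrivial groupoid acting on $\LTd$ and is not symmetric monoidal. The Ginzburg--Lonergan upgrade you invoke (\cref{Mellin Transform for biWhittaker Sheaves}) identifies $\Hpsi$ with $\IndCoh(\LTd\sslash\Wext)$, i.e.\ with sheaves on the \emph{base} of the groupoid; the category that is monoidally equivalent to $\IndCoh(\Gamma_{\Wext})$ is a different one, namely the nondegenerate universal Hecke category $\D(N\backslash G/N)^{T\times T,w}_{\text{nondeg}}$ (\cref{Main Monoidal Equivalences}). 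These two are related --- $\IndCoh(\Gamma_{\Wext})$ is, morally, $\uEnd_{\Hpsi}(\IndCoh(\LTd))$ --- but conflating them changes the answer.

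Your first step is also not available as a starting point. Even granting the corrected identification $\Hpsi\simeq\IndCoh(\LTd\sslash\Wext)$, the claim that $\C\mapsto\C^{N^-,\psi}$ is an equivalence of $2$-categories from $G\text{-mod}_{\text{nondeg}}$ onto $\Hpsi\text{-mod}$ would yield $G\text{-mod}_{\text{nondeg}}\simeq\IndCoh(\LTd\sslash\Wext)\text{-mod}$, which is a \emph{stronger} statement than the theorem: passing between $\IndCoh(\LTd\sslash\Wext)\text{-mod}$ and $\IndCoh(\Gamma_{\Wext})\text{-mod}$ requires a $1$-affineness/descent statement for $\LTd\to\LTd\sslash\Wext$ at the level of module categories, which is delicate for an infinite discrete groupoid acting on an ind-scheme and which the paper never proves (indeed the theorem is deliberately phrased in terms of $\Gamma_{\Wext}$ to avoid it). The Barr--Beck argument you defer to this step is therefore carrying essentially the entire content of the theorem plus an extra unproved descent claim. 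The paper's route avoids the Whittaker side for the $2$-categorical reduction entirely: by the Morita equivalence of $\D(G)$ with $\D(N\backslash G/N)^{T\times T,w}$ (\cref{BZGOTheorem}), nondegenerate $G$-categories are tautologically modules over $\D(N\backslash G/N)^{T\times T,w}_{\text{nondeg}}$, and all the work goes into the monoidal computation of that category --- via the Gelfand--Graev $W$-action, pointwise Soergel-theoretic computations in category $\mathcal{O}$ over field-valued points of $\LTd$, and a comonadicity argument over $\IndCoh(\LTd)$ matching the comonads $\Avpsi\AvN$ and $t_*^{\IndCoh}t^!$. Your instinct that rank-one reductions and a comonadicity argument are the crux is sound, but they must be applied to the $N$-invariant Hecke category over $\IndCoh(\LTd)$, not to the bi-Whittaker category.
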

\raggedbottom
As we will review in \cref{A Universal Nondeg Intro Section}, this result can reinterpreted as an equivalence of monoidal categories $\D(N\backslash G/N)^{T \times T, w}_{\text{nondeg}} \simeq \IndCoh(\Gamma_{\Wext})$; see \cref{Main Monoidal Equivalences} for the full statement. 

A key difference in the theory of groups $G$ acting on categories $\C$ from the theory of groups acting on vector spaces is the existence of nontrivial morphisms between morphisms of invariants $\C^{H_1}$ and $\C^{H_2}$, where each $H_i$ is a closed subgroup of $G$; concretely, these are manifested by adjoint functors. These relations will prove a key technical tool in the proof of our theorem
. In particular, we will see  in \cref{N Averaging Fully Faithful on Whittaker Subcategory} that, for any $G$-category, the category $\C^N_{\text{nondeg}}$ admits a $W$-action, and that there is a fully faithful functor $\C^{N^{-}, \psi} \xhookrightarrow{} \C_{\text{nondeg}}^{N, W}$ via constructing a certain left adjoint. 

When $\C$ itself is given by Whittaker $\D$-modules on $G$, one can show \cite[Corollary 3.4]{GannonNew} that $\C^N \simeq \C^N_{\text{nondeg}}$, and, using this special case of \cref{N Averaging Fully Faithful on Whittaker Subcategory}, in \cref{Mellin Transform for biWhittaker Sheaves} we derive a monoidal equivalence between the category of bi-Whittaker $\D$-modules on $G$ and $\Wext$-equivariant sheaves on $\LTd$ which descend to the coarse quotient, providing a monoidal upgrade of \cite{Gin} and \cite{Lo}. Using nondegeneracy, we also show that the parabolic restriciton of a very central $\D$-module acquires a $W$-equivariant structure such that the sheaf (with its equivariance) descends to the coarse quotient $\LTd\sslash\Wext$, see \cref{ParabolicInductionAndRestriction}. 

\subsection{Motivation and Survey of Known Results}\label{MotivationAndSurvey}
    Assume we are given a finite dimensional vector space $V$ over an algebraically closed field $k$, equipped with an endomorphism $T: V \to V$. A familiar paradigm in representation theory and algebraic geometry is to regard $V$ as a module over the ring $k[x]$, where $x$ acts by the transformation $T$, and to write $V$ as a direct sum of its generalized eigenspaces $V_{\alpha}$. Furthermore, the vector space $V$ can be recovered from the various $V_{\alpha}$. We may equivalently view $V$ as a sheaf over the line, and then each $V_{\alpha}$ can be identified as the subsheaf which lives over $\alpha$. This particular example gives the well known Jordan normal form of a matrix, but there are analogues of this process for any $k$-algebra $A$ and any module $M \in \QCoh(\text{Spec}(A))$. 
    
    We can also apply this idea to other representation theoretic contexts. For example, let $\LG$ be a semisimple Lie algebra, and let $M$ be a representation of the Lie algebra. Then it is known (see eg \cite{HumO}) that $Z\LG$ is a polynomial algebra, and furthermore that we may identify $\text{Spec}(Z\LG) \simeq \LTd\sslash W$. Therefore we may spectrally decompose a given $\LG$-representation by viewing it as a sheaf on the space $\LTd\sslash W$.  
    
    We will discuss analogues for these results one categorical level higher. Specifically, our notion of vector space will be replaced with that of a category. The analogue of an algebraic group acting on a vector space is a \textit{group acting on a category}. For example, if $G$ acts on a variety $X$, then the category $\D(X)$, the category of $\D$-modules on $X$, obtains a canonical $G$-action. Similarly, we can obtain a $G$-action on the category $\LG\text{-mod}$. 
    
    Analogous to the case of a group acting on a vector space, we can define the invariants of a group acting on a category. For example, one can understand representations of the Lie algebra $\LG$ of a semisimple algebraic group $G$ via the invariants $\text{Rep}(G) \simeq \LG\text{-mod}^G$, or the associated zero block of the BGG category $\O$, which can be viewed (via the Beilinson-Bernstein localization theorem) as objects of $\D(G/B)^N$. Similarly, one can also study the other blocks of category $\O$ via the \textit{twisted invariants} $\D(G/_{\lambda}B)^N$ for $\lambda \in \LTd$. 
    
    Certain twisted invariants play a special role in geometric representation theory. Specifically, for a reductive group $G$ acting on a category $\C$, one can take the \textit{Whittaker invariants} $\C^{N^{-}, \psi}$. This category can be interpreted as the generically twisted $N^{-}$ invariants of $\C$. Often, Whittaker subcategories can be easier to understand than the usual $N$-invariants. For example, we have seen above that one may identify $\D(G/B)^N$ contains all of the information of the BGG category $\mathcal{O}_{0}$, whereas one can use the ideas of \cite{BBM} discussed below to show that $\D(G/B)^{N^{-}, \psi} \simeq \text{Vect}$.
    
    The Whittaker invariants of a category have often been used to \lq bootstrap\rq{} information about the original category, see, for example, \cite{AB} or  \cite{BezYun}. In fact, our results below can be viewed as an attempt to generalize the work done by \cite{BezYun} at generalized central character 0 to the setting of varying central character. One can formally argue that the category of \textit{bi-}Whittaker invariants of $\D(G)$, denoted $\Hpsi := \D(N^-_{\psi}\backslash G/_{-\psi}N^-)$, acts on the Whittaker invariants of any category with a $G$-action. It is therefore of interest to determine an explicit description for $\Hpsi$. This was identified in terms of sheaves on $\LTd$ which are equivariant with respect to the extended affine Weyl group $\Wext := \characterlatticeforT \rtimes W$ for the Langlands dual group, where $\characterlatticeforT$ is the character lattice $\text{Hom}_{\text{AlgGp}}(T, \mathbb{G}_m)$:
    
    \begin{Theorem}\label{Abelian Categorical Mellin Transform for biWhittaker Sheaves} (\cite{Lo}, \cite{Gin}) There is an equivalence identifying the abelian category of bi-Whittaker $\D$-modules on $G$ with the abelian category of $\Wext$-equivariant quasicoherent sheaves on $\LTd$ which descend to the coarse quotient $\LTd\sslash \Wext$.
    \end{Theorem}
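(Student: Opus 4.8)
The plan is to reduce the computation of $\Hpsi$ to the maximal torus via the open Bruhat cell, apply the classical Mellin transform there, and then recover the extended affine Weyl symmetry together with the coarse‑quotient condition from the way the remaining Bruhat cells glue onto the open one. This is, in spirit, the route taken in \cite{Lo} (through the quantum Toda lattice) and in \cite{Gin} (through nil‑Hecke algebras).

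I would begin with the open cell. Fixing a representative $\overline{w_0}$ of the longest element $w_0 \in W$, the big cell $B^-\overline{w_0}B^-$ is isomorphic to $N^- \times T \times N^-$ via $(n_1, t, n_2) \mapsto n_1 \overline{w_0} t n_2$, and in these coordinates the left $N^-$-action is left translation on $n_1$ and the right $N^-$-action is right translation on $n_2$. Hence the Whittaker reduction of the restriction of $\D(G)$ to $B^-\overline{w_0}B^-$ --- with respect to $(N^-,\psi)$ on the left and $(N^-,-\psi)$ on the right --- is computed by Fourier transform along the two affine‑space factors $n_1$ and $n_2$, and yields precisely $\D(T)$ (up to a cohomological shift by $\dim N^-$). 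The Mellin transform $\D(T) \simeq \QCoh(\LTd/\characterlatticeforT)$, the equivalence carrying a $\D$-module on $T$ to the quasicoherent sheaf on the moduli $\LTd$ of rank‑one local systems recording its monodromy and with $\characterlatticeforT$ acting by lattice translations, then identifies the restriction of a bi‑Whittaker $\D$-module to the big cell with a $\characterlatticeforT$-equivariant quasicoherent sheaf on $\LTd$.

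Next I would reconstruct the full category by gluing in the remaining Bruhat cells. The codimension‑one cells $B^- s_\alpha \overline{w_0}B^-$, one for each simple root $\alpha$, reduce to the rank‑one situation (the subgroup generated by $U_{\pm\alpha}$, a quotient of $\SL_2$), where the bi‑Whittaker $\D$-modules form the module category of the rank‑one quantum Toda (Bessel) algebra; by Kostant's theorem its Toda Hamiltonians generate a polynomial ring $k[\text{Cas}_\alpha]$ with $\text{Cas}_\alpha$ a degree‑two function on the relevant $\LTd$-line, and the explicit Bessel computation shows that such a module is, in a neighbourhood of the $\alpha$-wall, a quasicoherent sheaf pulled back along the branched double cover $\LTd \to \LTd /\!/ W_\alpha = \Spec k[\text{Cas}_\alpha]$ --- equivalently, a sheaf carrying a $W_\alpha$-equivariant structure on which the reflection $s_\alpha$ acts trivially over its fixed hyperplane. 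Gluing the open cell onto these codimension‑one cells forces the $\characterlatticeforT$-equivariant sheaf produced above to acquire a $\Wext = \characterlatticeforT \rtimes W$-equivariant structure --- the finite Weyl part emerging from the Bruhat stratification, whose strata are indexed by $W$, and the lattice part already present in the Mellin transform, up to the usual $\rho$-shift --- satisfying the trivial‑reflection condition above along every affine wall of $\Wext$. By the description of the coarse quotient in \cite{GannonDescentToTheCoarseQuotientForPseudoreflectionAndAffineWeylGroups}, imposing exactly these wall conditions cuts out $\QCoh(\LTd\sslash\Wext)$ inside $\QCoh^{\Wext}(\LTd)$, and one checks that the higher‑codimension cells, which meet the closed strata only in codimension $\geq 2$, impose no further relations. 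Finally, since the statement concerns abelian categories, I would confirm that the Mellin transform is t‑exact and that the Whittaker averagings are t‑exact up to the shift by $\dim N^-$ (being, up to that shift, Fourier transforms along affine spaces), so that the equivalence restricts to an equivalence of hearts.

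The main obstacle I anticipate is exactly this last matching: showing that the data glued in from the non‑open Bruhat cells amounts to precisely the coarse‑quotient descent condition, rather than to something weaker (all of $\QCoh^{\Wext}(\LTd)$) or stronger. This is the only step in which $\LTd\sslash\Wext$, as opposed to the naive stack quotient $\LTd/\Wext$, genuinely appears; it requires the hands‑on rank‑one computation together with a careful gluing argument along the Bruhat stratification, and it is also where one must be most careful with the affine (as opposed to the finite) part of $\Wext$, since the affine walls are invisible on $G$ itself and materialize only after the Mellin transform has exchanged lattice translations on $\LTd$ for monodromy twists of $\D$-modules on $T$.
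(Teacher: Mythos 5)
Your open-cell computation is correct as far as it goes: the $(N^-\times N^-,\psi\times(-\psi))$-reduction of $\D$-modules on the big cell $N^-\times T\times N^-$ is $\D(T)$, and Mellin turns this into a $\characterlatticeforT$-equivariant sheaf on $\LTd$. The gap is the step you yourself flag: the reconstruction of $\Hpsi$ from the open cell by ``gluing along the Bruhat stratification.'' First, the premise that the boundary cells only impose \emph{conditions} on the open-cell data is false. Bi-Whittaker $\D$-modules are not supported on the open cell: already for $G=\SL_2$, the $N^-\times N^-$-stabilizer of a point $t$ of the closed cell $B^-=N^-T$ is a copy of $N^-$ on which the character $(\psi,-\psi)$ restricts to $-(1+\alpha(t)^{-1})\psi$, which vanishes along a nonempty locus of $T$; the corresponding orbits carry nonzero twisted local systems whose extensions are genuine objects of $\Hpsi$ killed by restriction to the open cell. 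These boundary objects are precisely what makes the answer $\QCoh(\LTd\sslash\Wext)$ rather than $\QCoh(\LTd/\characterlatticeforT)$, so they must be accounted for as objects, not as wall conditions, and in particular restriction to the open cell is not even conservative. Second, your proposal supplies no mechanism for the $W$-symmetry: the Bruhat strata being indexed by $W$ does not give a $W$-action on anything, and in every known proof the finite Weyl part of the $\Wext$-equivariance is a nontrivial construction --- the Gelfand--Graev action realized here via Kazhdan--Laumon symplectic Fourier transforms (\cref{Nondegeneracy is Closed Under Actions}, \cref{WActionOnWhittaker}), geometric Satake in \cite{Lo}. Third, even granting the equivariant structure and the rank-one wall analysis, you never address why the resulting functor into $\Wext$-equivariant sheaves is fully faithful; recollement produces gluing data, not an embedding.

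For comparison, the paper does not argue on $G$ by stratification at all. It proves the derived, monoidal statement (\cref{Mellin Transform for biWhittaker Sheaves}) by (i) constructing the $W$-action on $\D(G/N)_{\text{nondeg}}$ via symplectic Fourier transforms, (ii) proving fully faithfulness of $\AvNShiftedAndLifted$ one infinitesimal character at a time using Soergel's Endomorphismensatz, where the key computation is $\uEnd(\underline{P}_{\lambda})^W\simeq (W\mathop{\times}\limits^{W_{[\lambda]}}C_{\lambda})^W\simeq k$ (\cref{UniversalAvNIsFullyFaithful}), and (iii) identifying the essential image with the sheaves descending to the coarse quotient via the triviality of the $\langle s_{\alpha}\rangle$-action on $\mathbb{G}_m^{\alpha}$-invariants (\cref{Action of Order Two Simple Reflection Group on Associated G_m Invariants is Trivial}, \cref{All Objects of Lifted AvN descend to the coarse quotient}); the abelian statement of \cref{Abelian Categorical Mellin Transform for biWhittaker Sheaves} then follows by passing to hearts of $t$-exact functors as in \cref{Our Argument Shows a t-Exact Equivalence of IndCoh on Coarse Quotient and Hpsi}. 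To salvage your route you would need an honest recollement argument for the two-step stratification in rank one that incorporates the boundary-supported objects, together with an independent construction of the $W$-action; at that point you would essentially be reproving the inputs the paper already uses.
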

    
    Ginzburg \cite{Gin} and Ben-Zvi--Gunningham \cite[Section 1.2]{BZG} also recorded the expectation that a derived, monoidal variant of \cref{Abelian Categorical Mellin Transform for biWhittaker Sheaves} should hold (see \cref{Conventions Subsection} for our exact categorical conventions). To state this precisely, we first recall the notion of the \textit{Mellin transform}, a symmetric monoidal, $W$-equivariant equivalence $\FourierMukai: \IndCoh(\LTd/\characterlatticeforT) \xrightarrow{\sim} \D(T)$, where the notation follows \cite{LurieEllipticI}. Here, we use ind-coherent sheaves rather than quasi-coherent sheaves since our $\D$-modules are right $\D$-modules in the sense of \cite{GaiRozCrystals}, although since $T$ is smooth, there is also a similar equivalence for left $\D$-modules $\QCoh(\LTd/\characterlatticeforT) \xrightarrow{\sim} \D^{\ell}(T)$.\footnote{Although the Mellin transform is typically stated as an equivalence of abelian categories, as the aforementioned categories can be identified as the derived categories of their respective hearts, one can deduce an analogous symmetric monoidal equivalence of the corresponding $\infty$-categories, which we show in \cref{Mellin Transform Appendix}.}
    
    Letting $\overline{\mathsf{s}}: \LTd/\Wext \to \LTd\sslash\Wext$ denote the canonical map, we can now state the derived, monoidal version of \cref{Abelian Categorical Mellin Transform for biWhittaker Sheaves}:  
    
    
\begin{Theorem}\label{Mellin Transform for biWhittaker Sheaves}
    There is a monoidal, $t$-exact, fully faithful functor \[\jmath^* \AvNShiftedAndLifted: \Hpsi \xhookrightarrow{} \D(T)^W\] such that, under the Mellin transform, this functor induces monoidal equivalence $F'$ for which the following diagram commutes
    \raggedbottom
     \begin{equation*}
  \xymatrix@R+2em@C+2em{
  \IndCoh(\LTd\sslash\Wext) \ar[d]_{F'} \ar[r]^{\mathsf{s}^!} &  \IndCoh(\LTd/\Wext) \ar[d]^{\FourierMukai} \\
  \Hpsi \ar[r]^{\jmath^*\AvNShiftedAndLifted} & \D(T)^W
  }
 \end{equation*} and such that $F'[\text{dim}(\LTd)]$ is $t$-exact. 
\end{Theorem}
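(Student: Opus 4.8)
The plan is to construct $\AvNShiftedAndLifted$ by specializing \cref{N Averaging Fully Faithful on Whittaker Subcategory} to the $G$-category $\C := \D(G)^{N^-, \psi}$ of left-Whittaker $\D$-modules on $G$, regarded as a $G$-category via right translation, and then to identify every category and functor that appears. For this $\C$ one has $\C^{N^-, \psi} = \D(N^-_\psi \backslash G /_{-\psi} N^-) = \Hpsi$ and $\C^{N} = \D(N^-_\psi \backslash G / N)$, and \cref{No Degenerate Whittaker Objects in D(G/N)} shows the latter is already nondegenerate; hence \cref{N Averaging Fully Faithful on Whittaker Subcategory} equips $\D(N^-_\psi \backslash G / N)$ with a $W$-action and produces a fully faithful functor $\Hpsi \hookrightarrow \D(N^-_\psi \backslash G / N)^{W}$. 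Next I would identify $\D(N^-_\psi \backslash G / N) \simeq \D(T)$: the Whittaker reduction of the basic affine space $G/N$ along the nondegenerate character $\psi$ of $N^-$ is concentrated on the open subset $N^-B/N \subset G/N$, whose Whittaker reduction is canonically $\D(T)$, compatibly with the residual right $T$-action. One must then check that the abstract $W$-action of \cref{N Averaging Fully Faithful on Whittaker Subcategory} goes over, under this identification, to the standard action of the Weyl group of $(G,T)$ on $T$; this is done by unwinding that $W$-action cell by cell, where it is governed by exactly the affine-Weyl combinatorics defining $\LTd \sslash \Wext$. Granting this, $\C^{N, W}_{\text{nondeg}} \simeq \D(T)^W$, and $\AvNShiftedAndLifted$ is the resulting fully faithful functor $\Hpsi \hookrightarrow \D(T)^W$ --- concretely the averaging functor $\AvN$ along the right copy of $N$, shifted to $\AvNshifted$ so as to be $t$-exact and lifted to the $W$-equivariant category, whence the notation $\widetilde{\text{Av}}_*$.

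Next I would establish the three properties of $\AvNShiftedAndLifted$. Fully faithfulness is immediate from the construction. For $t$-exactness: $*$-averaging along the unipotent $N$ has bounded cohomological amplitude, and on $\Hpsi$ the shifted functor $\AvNshifted$ is $t$-exact; since the identification $\D(N^-_\psi\backslash G/N)\simeq\D(T)$ and the inclusion $\D(T)^W\hookrightarrow\D(T)$ are $t$-exact, so is $\AvNShiftedAndLifted$. Monoidality is the crux: $\Hpsi$ carries the convolution monoidal structure from $N^-_\psi\backslash G/_{-\psi}N^-$, while $\D(T)^W$ carries convolution along the group $T$ equipped with its $W$-action, which under the Mellin transform is the symmetric monoidal structure $\otimes^!$ on $\IndCoh(\LTd/\Wext)$. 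A priori $\AvNShiftedAndLifted$, being (up to the shift) a right adjoint of a monoidal functor, is only lax monoidal. Since it is cocontinuous and $\Hpsi$ is generated under colimits by the standard and costandard bi-Whittaker sheaves indexed by $\Wext$, it suffices to show the lax structure map is an isomorphism on pairs of such generators; one can do this either by the explicit ``affine-Hecke'' convolution computation, which matches the tensor product on $\LTd/\Wext$, or formally from the monoidal classification $\DbiTwnondeg \simeq \IndCoh(\biggroupoid)$ of \cref{NondegGCatsTheorem} (cf.\ \cref{Main Monoidal Equivalences}), by realizing $\AvNShiftedAndLifted$ through the bi-Whittaker averaging functor out of $\DbiTwnondeg$ together with the convolution formalism of \cite[Section 5.5]{GaRoI}.

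Finally I would produce $F'$ and verify the square. Under $\FourierMukai$ the target becomes $\IndCoh(\LTd/\Wext) \simeq \IndCoh(\LTd/\characterlatticeforT)^W$, and by \cite{GannonDescentToTheCoarseQuotientForPseudoreflectionAndAffineWeylGroups} the functor $\pi^!: \IndCoh(\LTd\sslash\Wext)\to\IndCoh(\LTd/\Wext)$ is fully faithful, with essential image on the heart exactly the $\Wext$-equivariant quasicoherent sheaves descending to the coarse quotient. On the heart, $\AvNShiftedAndLifted$ realizes the abelian equivalence of \cref{Abelian Categorical Mellin Transform for biWhittaker Sheaves} post-composed with $\pi^!$ and $\FourierMukai$, so the essential image of $\Hpsi^{\heartsuit}$ under $\FourierMukai^{-1}\circ\AvNShiftedAndLifted$ coincides with that of $\IndCoh(\LTd\sslash\Wext)^{\heartsuit}$ under $\pi^!$. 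Since $\AvNShiftedAndLifted$ and $\pi^!$ are fully faithful, cocontinuous, and $t$-exact up to a shift, and both $\Hpsi$ and $\IndCoh(\LTd\sslash\Wext)$ are generated under colimits by their hearts, the two essential images in $\D(T)^W$ agree. Hence there is a unique equivalence $F': \IndCoh(\LTd\sslash\Wext)\xrightarrow{\sim}\Hpsi$ with $\AvNShiftedAndLifted\circ F' \simeq \FourierMukai\circ\pi^!$, which gives the asserted commuting square; $F'$ is monoidal because $\AvNShiftedAndLifted$, $\pi^!$ (a $!$-pullback, hence symmetric monoidal for $\otimes^!$), and $\FourierMukai$ are, and $F'[\text{dim}(\LTd)]$ is $t$-exact since $\AvNShiftedAndLifted$ is $t$-exact while $\FourierMukai\circ\pi^!$ is $t$-exact up to the shift $[\text{dim}(\LTd)]$. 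The one genuinely hard point is the monoidality of $\AvNShiftedAndLifted$ --- that bi-Whittaker convolution passes to the tensor product on the coarse quotient --- which is exactly the content promoting the abelian equivalence of \cite{Gin} and \cite{Lo} to a monoidal one, and which also subsumes matching the abstract $W$-action with the geometric one.
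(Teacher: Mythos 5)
Your overall architecture tracks the paper's: fully faithfulness is obtained by specializing \cref{N Averaging Fully Faithful on Whittaker Subcategory} to $\C = \D(G)^{N^{-},\psi}$ (this is precisely \cref{BBMExample}), the identification $\D(N^-_{\psi}\backslash G/N)\simeq \D(T)$ is \cref{SupportOfWhittakerSheaves}, and the compatibility of the abstract $W$-action with the standard one on $T$ is Ginzburg's result recorded in \cref{WActionOnWhittaker}. The genuine gap is the monoidality of $\AvNShiftedAndLifted$, which you correctly single out as the crux but do not actually prove. Your first route asserts that $\AvNShiftedAndLifted$ is a priori lax monoidal as ``a right adjoint of a monoidal functor,'' but the relevant left adjoint is $\Avpsi$, and its strong monoidality is nowhere established --- it is essentially equivalent to the statement you are trying to prove, so the lax structure whose invertibility you propose to check on generators has not been constructed. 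Your second route, invoking the classification $\DbiTwnondeg\simeq\IndCoh(\biggroupoid)$ of \cref{NondegGCatsTheorem}/\cref{Main Monoidal Equivalences}, is circular in the paper's logical order: the proof of \cref{Main Monoidal Equivalences} (in particular the identification of comonads in \cref{IdentificationOfCoalgebrasSection}) itself invokes \cref{Mellin Transform for biWhittaker Sheaves}. The paper's actual argument (\cref{MonoidalityOfAvN}) is a short formal one your proposal is missing: identify $\Hpsi\simeq\uEnd_G(\D(G/_{-\psi}N^-))$, observe that passing to right $N$-invariants gives a \emph{monoidal} functor $\uEnd_G(\D(G/_{-\psi}N^-))\to\uEnd_{T\rtimes W}(\D(N\backslash G/_{-\psi}N^-))$ since it preserves composition of endofunctors, and note that the evaluation functors at $\delta_{N^{-},\psi}$ and at $\AvN(\delta_{N^{-},\psi})\simeq\delta_1\in\D(T)$ are monoidal equivalences intertwining this with $\AvN$. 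No generator-by-generator convolution computation is needed.

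A secondary divergence concerns the essential image. You propose to import the abelian equivalence of \cref{Abelian Categorical Mellin Transform for biWhittaker Sheaves} (Ginzburg--Lonergan) and match hearts. The paper instead proves containment of the image in the sheaves descending to the coarse quotient (\cref{All Objects of Lifted AvN descend to the coarse quotient}, via the triviality of the Gelfand--Graev action on $\mathbb{G}_m^{\alpha}$-invariants), and then proves that $\Avpsi$ is conservative on that subcategory by a field-valued-point reduction to category $\mathcal{O}$; this keeps the argument independent of \cite{Gin} and \cite{Lo}, which is what allows the paper to \emph{reprove} their theorem (\cref{Our Argument Shows a t-Exact Equivalence of IndCoh on Coarse Quotient and Hpsi}). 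Your route is admissible if their result is accepted as input, but it additionally requires verifying that $\AvNShiftedAndLifted$ restricted to the heart coincides with their equivalence, which you assert rather than check.
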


\begin{Remark}\label{Our Argument Shows a t-Exact Equivalence of IndCoh on Coarse Quotient and Hpsi}
The heart of any $t$-exact functor of DG categories (or, more generally, triangulated categories) equipped with $t$-structures is an exact functor of abelian categories, see \cite[Proposition 1.3.17]{FaisceauxPerverseBeilinsonBernsteinDeligne}.  Therefore, taking the heart of the equivalence in \cref{Mellin Transform for biWhittaker Sheaves}, our methods show that there is an \textit{exact} equivalence of the abelian categories in \cref{Abelian Categorical Mellin Transform for biWhittaker Sheaves}.
\end{Remark}

\begin{Remark}
The $t$-exactness of $\jmath^*\AvNShiftedAndLifted$ is essentially due to Ginzburg \cite[Theorem 1.5.4]{Gin}. Moreover, as we will see below, the composite 
\raggedbottom
\[\Hpsi \xrightarrow{\jmath^*\AvNShiftedAndLifted} \D(T)^W \xrightarrow{\text{oblv}^W} \D(T)\]

\noindent of $\jmath^*\AvNShiftedAndLifted$ and the forgetful functor $\mathrm{oblv}^W$ can be identified, up to cohomological shift, with the composite of an averaging functor $\AvN$ and a standard equivalence of categories $\jmath^*$ whose construction we recall below in \cref{SupportOfWhittakerSheaves}.
\end{Remark}

Of course, our proof of \cref{Mellin Transform for biWhittaker Sheaves} is different than the proofs of \cite{Gin} or \cite{Lo}. For example, the ideas in \cite{Lo} pass through the geometric Satake equivalence, whereas we do not. We view our proof as closer in spirit to the proof of \cite{Gin}; for example, both use versions of the \textit{Gelfand-Graev action}, see \cite[Section 3]{GannonNew}. However, the idea to use the groupoid $\Gamma_{\Wext}$ is taken from \cite{Lo} and \cite{BZG}. 

\subsection{Symmetric Monoidality of Whittaker-Hecke Category}
Much of our proof of \cref{Mellin Transform for biWhittaker Sheaves} is phrased in the language of categorical representation theory. This can provide another conceptual explanation for \cref{Mellin Transform for biWhittaker Sheaves} which makes certain aspects of this equivalence follow from general, categorical principles. For example, in \cite{Gin}, Ginzburg, following Drinfeld, noted that a derived version of \cref{Abelian Categorical Mellin Transform for biWhittaker Sheaves} would have the following consequence, proved by Ben-Zvi and Gunningham shortly after the first edition of \cite{Gin} was published:

\begin{Corollary}\label{Whittaker Hecke Category is Symmetric Monoidal} \cite[Corollary 6.15]{BZG}
The convolution monoidal stucture on $\Hpsi$ can be upgraded to a symmetric monoidal structure. 
\end{Corollary}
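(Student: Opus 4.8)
The plan is to deduce this immediately from \cref{Mellin Transform for biWhittaker Sheaves}. That theorem supplies a monoidal equivalence $F' \colon \IndCoh(\LTd\sslash\Wext) \xrightarrow{\sim} \Hpsi$, with the target carrying its convolution monoidal structure; so it suffices to exhibit a \emph{symmetric} monoidal structure on the source that refines the monoidal structure along which $F'$ is monoidal, and then to transport it across $F'$. A monoidal equivalence identifies the target monoidal structure with the image of the source one, so the symmetric monoidal structure transported to $\Hpsi$ will have underlying monoidal structure precisely the convolution one, which is exactly the assertion of the corollary.

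First I would identify the relevant monoidal structure on $\IndCoh(\LTd\sslash\Wext)$ by tracing through the commuting square of \cref{Mellin Transform for biWhittaker Sheaves}. The Mellin transform $\FourierMukai$ is a $W$-equivariant symmetric monoidal equivalence $\IndCoh(\LTd/\characterlatticeforT) \xrightarrow{\sim} \D(T)$ (with $\D(T)$ given the convolution symmetric monoidal structure, which exists since $T$ is commutative), and hence induces a symmetric monoidal equivalence $\IndCoh(\LTd/\Wext) \xrightarrow{\sim} \D(T)^W$ on passing to $W$-invariants (using $\IndCoh(\LTd/\Wext) \simeq \IndCoh(\LTd/\characterlatticeforT)^W$); the pullback $\pi^! \colon \IndCoh(\LTd\sslash\Wext) \to \IndCoh(\LTd/\Wext)$ is a fully faithful symmetric monoidal functor for the pointwise $!$-tensor products, by the coarse-quotient formalism of \cite{GannonDescentToTheCoarseQuotientForPseudoreflectionAndAffineWeylGroups}; and $\AvNShiftedAndLifted$ is fully faithful and monoidal for convolution. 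Since these three functors are monoidal for the indicated structures and sit in the square with $F'$, one reads off that $F'$ is monoidal for the $!$-tensor product on $\IndCoh(\LTd\sslash\Wext)$, i.e.\ $\mathcal F \otimes \mathcal G := \Delta^!(\mathcal F \boxtimes \mathcal G)$ for the diagonal $\Delta$ of the prestack $\LTd\sslash\Wext$.

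Now the $!$-tensor product on $\IndCoh$ of any prestack canonically refines to a symmetric monoidal structure: the diagonal $\Delta$ is a cocommutative comonoid with respect to the Cartesian symmetric monoidal structure on prestacks, $\IndCoh(-)$ carries that structure to external product, and pulling back along $\Delta$ therefore equips $\IndCoh(\LTd\sslash\Wext)$ with the structure of a commutative algebra object in $\DGCatContk$ lifting its $!$-tensor. Transporting this symmetric monoidal structure along the monoidal equivalence $F'$ upgrades the convolution monoidal structure on $\Hpsi$ to a symmetric monoidal one, recovering \cite[Corollary 6.15]{BZG}. No new geometric input is needed: the only genuinely non-formal ingredients are \cref{Mellin Transform for biWhittaker Sheaves} itself and the fact that $\pi^!$ and the $W$-invariants of $\FourierMukai$ are symmetric monoidal and not merely monoidal — so that the symmetric structure on the $!$-tensor really is the one transported to convolution — and both of these are contained in the cited formalisms; the rest is bookkeeping of symmetric-monoidal coherence data, which I expect to be the only delicate point.
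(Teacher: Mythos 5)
Your argument is correct, and it is the same basic deduction the paper makes — read the corollary off from \cref{Mellin Transform for biWhittaker Sheaves} — but you travel around the other leg of the commuting square. The paper's route is the right vertical arrow: $\AvNShiftedAndLifted\colon \Hpsi \hookrightarrow \D(T)^W$ is monoidal (\cref{MonoidalityOfAvN}) and fully faithful, $\D(T)^W$ is symmetric monoidal under convolution because $T$ is commutative, and a full monoidal subcategory of a symmetric monoidal category inherits a symmetric monoidal structure; no identification of the essential image is needed. Your route instead uses the left vertical arrow $F'$, which requires the full strength of the theorem (that the essential image is exactly the sheaves descending to the coarse quotient), plus the extra bookkeeping you correctly flag: that $\pi^!$ and the $W$-invariants of $\FourierMukai$ are symmetric and not merely monoidal, so that the monoidal structure on $\IndCoh(\LTd\sslash\Wext)$ for which $F'$ is monoidal really is the $!$-tensor, which canonically refines to a commutative algebra structure in $\DGCatContk$. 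Both are sound; the paper's version is lighter because it never has to pin down the monoidal structure on the source of $F'$, while yours has the minor advantage of exhibiting the symmetric structure on $\Hpsi$ explicitly as the $!$-tensor on $\LTd\sslash\Wext$ rather than as an abstractly inherited one. One small point of care in your write-up: transporting along the monoidal equivalence $F'$ produces a symmetric refinement of a monoidal structure that is \emph{equivalent} to convolution, which is exactly what ``can be upgraded'' asks for, but you should say this rather than claim literal equality of the underlying monoidal data.
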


The proof of \cref{Whittaker Hecke Category is Symmetric Monoidal} in \cite{BZG} is somewhat indirect. Specifically, the authors prove that the \textit{cohomologically sheared} (or \textit{asymptotic}) version of the Whittaker-Hecke category $\mathcal{H}_{\psi}^{\hbar}$ is symmetric monoidal using the derived, loop rotation equivariant geometric Satake theorem of \cite{BezrukavnikovFinkelbergEquivariantSatakeCategoryandKostantWhittakerReduction}, and argue that one can obtain a symmetric monoidal structure on $\Hpsi$ by unshearing (see \cite[Section 5.3]{BZG}) a graded lift of $\mathcal{H}_{\psi}^{\hbar}$ provided by a mixed version of the derived, loop rotation equivariant geometric Satake theorem of \cite{BezrukavnikovFinkelbergEquivariantSatakeCategoryandKostantWhittakerReduction}, which is not currently available in the literature.\footnote{For some progress in this direction, see the results announced in \cite{HoLiRevisitingMixedGeometry}.}

The fully faithfulness of \cref{Mellin Transform for biWhittaker Sheaves} provides an alternate proof of the symmetric monoidality of $\Hpsi$ which is more direct. Specifically, because we will see that the functor $\AvNShiftedAndLifted$ is monoidal in \cref{MonoidalityOfAvN}, the fully faithfulness of $\AvNShiftedAndLifted$ immediately implies \cref{Whittaker Hecke Category is Symmetric Monoidal}, since we can identify $\Hpsi$ as a monoidal subcategory of a symmetric monoidal category. 

\subsection{Generalization to Nondegenerate $G$-categories}\label{Generalization to Nondegenerate G-Categories Section} The principles of categorical representation theory will also allow us to prove the following generalization of \cref{Mellin Transform for biWhittaker Sheaves} to all nondegenerate $G$-categories (discussed above in \cref{SurveyOfResults}), see \cref{N Averaging Fully Faithful on Whittaker Subcategory}. Specifically, note that we may interpret the symmetric monoidality of  \cref{Mellin Transform for biWhittaker Sheaves} as a statement regarding spectrally decomposing categories with a $G$-action. For example, \cref{Mellin Transform for biWhittaker Sheaves} says that if $\C$ is a category with a $G$-action, then for each $[\lambda] \in \LTd\sslash\Wext$, we may consider the eigencategories of its Whittaker invariants $(\C^{N^{-}, \psi})_{[\lambda]}$. However, some categories do not admit Whittaker invariants. For example, one can show that $\text{Vect}^{N^{-}, \psi} \simeq 0$, see \cite[Example 2.41]{GannonNew}.

On the other hand, work of \cite{BZGO} (which we will summarize below in \cref{BZGOTheorem}) shows that if $\C$ is any $G$-category, the subcategory $\C^N$ (with its natural symmetries) determines $\C$. It is therefore of interest to relate the $N$-invariants of a category to the Whittaker invariants. To do this, we recall the following well known result, which we provide a proof for the sake of completeness in \cite[Proposition 1.3]{GannonNew}: 

\begin{Proposition}\label{SupportOfWhittakerSheaves}
Restricting to the open Bruhat cell induces an equivalence $$\D(G/N)^{N^-, \psi} \xrightarrow{\sim} \D(N^-B/N)^{N^-, \psi}$$ which is $t$-exact and (right) $T$-equivariant. Moreover, if $\jmath^*$ denotes the composite of this equivalence with the equivalence \begin{equation}\label{Equivalence of Whittaker on Big Cell with DT}\tag{*}\D(N^-B/N)^{N^-, \psi} \simeq \D(T)\end{equation} then $\jmath^*$ exhibits $\D(G/N)^{N^-, \psi}$ as a right $\D(T)$-category that is free of rank one.
\end{Proposition}

Using the equivalence $\jmath^*$ of \cref{SupportOfWhittakerSheaves}, we may reinterpret the statement of \cref{Mellin Transform for biWhittaker Sheaves} in the language of groups acting on categories. Specifically, after applying $\jmath^*$, \cref{Mellin Transform for biWhittaker Sheaves} in fact says that for the left $G$-category $\C := \D(G)^{N^{-}, -\psi}$, the averaging functor $\AvN: \C^{N^{-}, \psi} \to \C^N$, after applying cohomological shift, lifts to a fully faithful, $t$-exact functor:\footnote{Here, and in what follows, we lightly abuse notation and identify $\D(G)^{N} \simeq \D(G/N)$ without explicitly recording this equivalence of categories in our notation.} 
\raggedbottom
\begin{equation}\label{GeneralAvSetup}
\C^{N^-, \psi} \xhookrightarrow{\AvNShiftedAndLifted} \C^{N, W}.
\end{equation}

Now let $\C$ be any $G$-category. Since $\C^N$ determines $\C$, one may ask whether a similar technique can be applied. Unfortunately, for example, in the universal case $\C = \D(G)$, the category $\C^N$ is not expected to admit a natural $W$-action. However, as we have shown in \cite{GannonNew} (and recall below in \cref{Summary of Companion Paper}) $W$ \textit{does} act on any nondegenerate $G$-category $\C$. We will show that, for such $\C$, the analogue of \cref{Mellin Transform for biWhittaker Sheaves} holds:
    
\begin{Theorem}\label{N Averaging Fully Faithful on Whittaker Subcategory}
For any nondegenerate $G$-category $\C$, the category $\C^N$ acquires a canonical $W$-action and there is an induced, fully faithful functor $\AvNShiftedAndLifted: \C^{N^{-}, \psi} \xhookrightarrow{} \C^{N, W}$. \end{Theorem}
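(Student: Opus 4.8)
The plan is to bootstrap the statement from the universal computation recorded in \cref{Mellin Transform for biWhittaker Sheaves} by means of the reconstruction theorem of \cite{BZGO} (see \cref{BZGOTheorem}), which recovers an arbitrary $G$-category $\C$, together with its various invariants, from the datum of $\C^N$ equipped with its symmetries as a module category over $\HN$. For the $W$-action, recall that $\CatN$ carries a canonical action of $W$ by monoidal autoequivalences --- one of the two commuting copies of $W$ in the Gelfand--Graev symmetry of \cref{WeylGroupAction}. For nondegenerate $\C$ one checks directly from \cref{NondegenerateDefinition} that the $\HN$-module structure on $\C^N$ factors through $\CatN$, and transporting the $W$-action along this module structure equips $\C^N$ with the asserted $W$-action; under \cref{BZGOTheorem} it refines to the $W$-action on $\C$ itself of \cref{Corollary on Cats}. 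Note also that $\C^{N^-, \psi}$ is determined by $\C^N$, since Whittaker invariants are automatically nondegenerate (\cref{No Degenerate Whittaker Objects in D(G/N)}) and hence unchanged by the passage to $\C_{\text{nondeg}}$.

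To construct the functor, recall that $\AvN[\dim N]\colon \C^{N^-, \psi} \to \C^N$ --- the composite of $\text{oblv}\colon \C^{N^-, \psi} \to \C$ with the right adjoint $\C \to \C^N$ of the forgetful functor, appropriately shifted --- is defined for every $G$-category. Promoting it to a functor valued in $\C^{N, W}$ amounts to equipping $\AvN[\dim N](M)$ with a natural $W$-equivariant structure. I would produce this by transporting, through \cref{BZGOTheorem}, the $W$-equivariant structure that \cref{Mellin Transform for biWhittaker Sheaves} supplies in the universal case $\C = \D(G)^{N^-, -\psi}$ --- where $\C^{N^-, \psi} \simeq \Hpsi$, $\C^N \simeq \D(T)$ by a twisted variant of \cref{SupportOfWhittakerSheaves}, and $\AvNShiftedAndLifted \colon \Hpsi \hookrightarrow \D(T)^W$ is the lifted, fully faithful functor of \cref{Mellin Transform for biWhittaker Sheaves} --- and then checking that the transported structure is compatible with the Gelfand--Graev $W$-action on $\CatN$ used above. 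This yields $\AvNShiftedAndLifted \colon \C^{N^-, \psi} \to \C^{N, W}$ for every nondegenerate $\C$.

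For full faithfulness, write $R$ for the right adjoint of $\AvNShiftedAndLifted \colon \C^{N^-, \psi} \to \C^{N, W}$; it suffices to prove the unit $\id_{\C^{N^-, \psi}} \to R \circ \AvNShiftedAndLifted$ is an equivalence. The source $\C^{N^-, \psi}$ is a module category over $\Hpsi$ and the target $\C^{N, W}$ is a module category over $\D(T)^W$ --- the torus monodromy on the $\CatN$-module $\C^N$ furnishing the latter --- and since $\Hpsi \simeq \IndCoh(\LTd\sslash\Wext)$ is rigid, both $\AvNShiftedAndLifted$ and $R$ are $\Hpsi$-linear, so $R \circ \AvNShiftedAndLifted$ is an $\Hpsi$-linear endofunctor of $\C^{N^-, \psi}$. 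The key point is that this endofunctor, and the unit, are assembled from the $\CatN$-module $\C^N$ by one and the same recipe for every nondegenerate $\C$; evaluating the recipe on the universal category $\C = \D(G)^{N^-, -\psi}$ --- where the unit is an equivalence by \cref{Mellin Transform for biWhittaker Sheaves} --- and then reinstating a general $\C$ by base change along $\C^N$ shows the unit is an equivalence in general, so $\AvNShiftedAndLifted$ is fully faithful.

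The main obstacle is precisely the last manoeuvre: that forming the right adjoint $R$ commutes with base change along the $\CatN$-module $\C^N$, equivalently that the universal adjunction descends to a statement about bimodule categories over $\CatN$ that is stable under $- \otimes_{\CatN} \C^N$. Right adjoints are not base-change stable in general; here nondegeneracy is what rescues the argument. The vanishing of rank-one parabolic invariants forces the Gelfand--Graev reflection functors to be equivalences, which makes $\CatN$ rigid and the kernels implementing $\AvN$, the $W$-averaging, and their adjoints dualizable --- and dualizable kernels do respect base change. Establishing this dualizability, and the coherence of the $W$-equivariant structures against the Gelfand--Graev action, is where the bulk of the work lies; the $t$-exactness in \cref{Mellin Transform for biWhittaker Sheaves} helps by pinning the universal monad to the identity on the nose.
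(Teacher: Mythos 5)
There is a genuine gap here, and it is one of logical direction. Your argument routes entirely through \cref{Mellin Transform for biWhittaker Sheaves} as an input, but in the paper that theorem is a \emph{consequence} of the present one: the bi-Whittaker statement is precisely the special case $\C = \D(G)^{N^-,-\psi}$ (see \cref{BBMExample} and \cref{Reduction to Fixed Central Character}), so invoking it is circular, and nowhere in your proposal is fully faithfulness actually established by an independent computation. Moreover, the Morita-theoretic reduction runs the wrong way. The universal object for this purpose is $\D(G)$ itself, not $\D(G)^{N^-,-\psi}$: writing $\C^{N^-,\psi}\simeq \D(N^-_{\psi}\backslash G)\otimes_G\C$ and $\C^{N,W}_{\text{nondeg}}\simeq \D(N\backslash G)^{W}_{\text{nondeg}}\otimes_G\C$, the functor for general $\C$ is $F\otimes_G\mathrm{id}_{\C}$ for a universal right-$G$-equivariant $F$, and fully faithfulness for all $\C$ follows from the unit of $(F,F^R)$ being an equivalence \emph{before} tensoring. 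Your bootstrap instead assumes the unit becomes an equivalence after applying $-\otimes_{\mathcal{H}_N}\D(N\backslash G/_{-\psi}N^-)$, i.e.\ for the particular non-free $\CatN$-module $\C^N\simeq\D(T)$, and tries to propagate this to arbitrary modules; that implication fails unless $\D(T)$ is a Morita generator over $\CatN$, which is essentially the content of \cref{Main Monoidal Equivalences} and is not available at this stage. Your closing appeal to rigidity and dualizable kernels addresses a non-issue (a continuous $G$-equivariant right adjoint automatically commutes with $-\otimes_G\C$ by \cref{Adjoint Functor is Automatically G Equivariant}); the obstruction is the direction of the reduction, not base change of adjoints.

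What is missing is the computation in the genuine universal case. The paper shows the kernel of $\AvN$ is $\delta_1\in\D(T)\simeq\D(N^-_{\psi}\backslash G/N)_{\text{nondeg}}$, canonically $W$-equivariant since $\ast\hookrightarrow T$ is, which produces the lift; fully faithfulness is then checked one central character at a time via \cref{Can Check Equivalence on Each field-valued Point}, where it reduces to a Soergel-theoretic identity in nondegenerate category $\mathcal{O}$: the unit evaluated on $k\in\text{Vect}\simeq\D(G/_{\lambda}B)^{N^-,\psi}$ is controlled by $\uEnd_{\D(G/_{\lambda}B)^{N,W}_{\text{nondeg}}}(\underline{P}_{\lambda})\simeq (W\times^{W_{[\lambda]}}C_{\lambda})^{W}\simeq k$, using the Endomorphismensatz of \cref{Soergel Summary}, \cref{Avpsi is Translation}, and \cref{Sym(t)-Action Factors Through Finite Subscheme}. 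None of this content can be recovered from the formal manipulations in your proposal.
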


\begin{Example}\label{BBMExample}
    Let $\C = \D(G)^{N^{-}, -\psi}$ with its canonical left $G$-action. Then $\C$ is nondegenerate, see \cite[Corollary 3.4]{GannonNew}. Furthermore, by \cref{SupportOfWhittakerSheaves} we have $C^N \simeq \D(T)$ and a result of Ginzburg's (\cite[Proposition 5.5.2]{Gin}) states that this isomorphism is $W$-equivariant. Thus a special case of \cref{N Averaging Fully Faithful on Whittaker Subcategory} gives the fully faithfulness statement in \cref{Mellin Transform for biWhittaker Sheaves}. 
\end{Example}

\subsection{A Universal Nondegenerate $G$-category}\label{A Universal Nondeg Intro Section}
Recall that if $H$ is any algebraic group acting on a category $\C$, we may also define its \textit{weak invariants}. This is defined by forgetting the action of the category $\D(H)$ down to an action of $\IndCoh(H)$ and taking invariants of $\C$ as an $\IndCoh(H)$-module category. The notion of weak invaraints is specific to groups acting on categories (as opposed to vector spaces). Moreover, for any discrete group $F$ the data of a weak action is equivalent to a strong action since the forgetful functor $\text{oblv}: \D(F) \xrightarrow{} \IndCoh(F)$ is an equivalence. 

\begin{Example}\label{Harish-Chandra Example}
The category $\D(G)^{G, w} \simeq \LG\text{-mod}$, while $\D(G)^G \simeq \text{Vect}$. We also note that the category $\D(G)$ obtains two commuting $G$-actions (one from the left action of $G$ on itself and one from the right). Therefore, we may define the category $\D(G)^{G \times G, w}$, and this category identifies with the \textit{Harish-Chandra category} $HC_G$ the category of $U\LG$-bimodules with an integrable diagonal action. Note we also see from this example a natural way to interpret the $G$-action on $\LG\text{-mod}$. 
\end{Example}

\begin{Example}
The category $\LG\text{-mod}$ acquires a $G$-action, and so, in particular, the category $\LG\text{-mod}^N$ acquires a $T \cong B/N$ action. We can identify the category $\LG\text{-mod}^{N, (T,w)}$ with the \textit{universal category} $\mathcal{O}$, see \cite{KalSaf}. We survey and study the connections to the BGG category $\mathcal{O}$ in much more detail in \cite{GannonNew}. In particular, we show there that the left adjoint to the functor $\AvNShiftedAndLifted$ at a fixed central character can be identified with an enhanced version of Soergel's functor $\mathbb{V}$. Thus, as explained in more detail in \cite[Section 1]{GannonNew}, this gives one interpretation of the left adjoint of $\AvNShiftedAndLifted$ in the universal case--it is an analogue of Soergel's $\mathbb{V}$ which does not require a fixed character. 
\end{Example}

The following theorem then states that a category $\C$ with a $G$-action can be recovered from $\C^{N, (T,w)}$ with its natural symmetries. 

\begin{Theorem}\label{BZGOTheorem} \cite[Theorem 1.2]{BZGO}
The monoidal categories $\D(G)$, $\D(N\backslash G/N)$, and $\D(N\backslash G/N)^{T \times T, w}$ are all Morita equivalent. 
\end{Theorem}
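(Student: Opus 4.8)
Recall that two monoidal DG categories are Morita equivalent precisely when their $2$-categories of module categories in $\DGCatContL$ are equivalent, equivalently when there is an invertible bimodule between them; in particular the relation is transitive. So it suffices to establish the two consecutive reductions $\D(G) \rightsquigarrow \D(N\backslash G/N)$ and $\D(N\backslash G/N) \rightsquigarrow \D(N\backslash G/N)^{T\times T,w}$.

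\emph{First reduction.} View $\D(G/N)$ as a left module category over $\D(G)$ via left translation. The projection/base-change formalism for $\D$-modules on quotient stacks (legitimate since $BG$ is $1$-affine for $G$ reductive) identifies the monoidal category $\End_{\D(G)}(\D(G/N))$ with $\D(N\backslash G/N)$, so $\D(G/N)$ is canonically a $(\D(G),\D(N\backslash G/N))$-bimodule; I would show it is invertible, equivalently that the strong $N$-invariants functor
\[
(-)^N \colon G\text{-mod} \longrightarrow \D(N\backslash G/N)\text{-mod},\qquad \C\mapsto\C^N,
\]
is an equivalence. Since $\D(G/N)$ is dualizable as a $\D(G)$-module (with dual $\D(N\backslash G)$), this functor is identified with $\D(N\backslash G)\otimes_{\D(G)}(-)$, hence preserves colimits and has both adjoints; and it is conservative, because it factors as $G\text{-mod}\xrightarrow{\mathrm{oblv}}N\text{-mod}\xrightarrow{(-)^N}\DGCatContL$ where $\mathrm{oblv}$ is conservative and $(-)^N$ on $N\text{-mod}$ is an equivalence since $N$ is unipotent (the de Rham prestack of a unipotent group is contractible, so $\D(BN)\simeq\mathrm{Vect}$ and weak $=$ strong invariants is inverse to the trivial-action functor). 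Barr--Beck--Lurie then applies; the resulting comonad on $\D(N\backslash G/N)\text{-mod}$ is corepresented by $\D(G/N)^N \simeq \D(N\backslash G/N)$, i.e.\ is the trivial comonad, and the equivalence follows.

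\emph{Second reduction.} Because $N$ is normal in $B$, the torus $T=B/N$ acts strongly on $N\backslash G/N$ on each side, and $\D(N\backslash G/N)^{T\times T,w}$ is the weak invariants for this action, monoidal via the residual convolution. The strategy is parallel: produce an invertible bimodule — for instance $\D(N\backslash G/N)^{T,w}$, the weak invariants for the right $T$ only, which carries commuting actions of $\D(N\backslash G/N)$ (left convolution) and of $\D(N\backslash G/N)^{T\times T,w}$ — and recognise the induced adjunction as an equivalence via Barr--Beck--Lurie. The structural input replacing contractibility of $N$ is the $1$-affineness of $BT$: a weak $T$-action is the datum of a quasicoherent sheaf of categories on $BT$, and one descends along the atlas $\Spec k\to BT$. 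This is exactly the point at which, in the Mellin-dual language used later in the paper, weak $T$-equivariance becomes descent along $\LTd\to\LTd/\characterlatticeforT$, so the argument here is the categorified shadow of the computation behind \cref{Mellin Transform for biWhittaker Sheaves}.

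\emph{Main obstacle.} I expect the second reduction to be the crux. In contrast with the unipotent case, weak invariants for a torus is \emph{not} a Morita equivalence in general — $\mathrm{Vect}^{T,w}\simeq\QCoh(BT)$ is far from Morita trivial — so one must genuinely use that the weak $T\times T$-action on $\D(N\backslash G/N)$ is the restriction of a \emph{strong} one (equivalently, that the bimodule implementing the equivalence is faithfully flat, or \'etale, over $\D(N\backslash G/N)^{T\times T,w}$), and one must carefully bookkeep the several commuting copies of $T$ so that the bar-resolution identifications remain compatible with all the monoidal structures. Isolating the correct ``monadicity against a strongly acting torus'' statement, checking its hypotheses, and ruling out the failure modes exhibited by the trivial action is where the real work lies; the first reduction, by comparison, is the standard unipotent-contractibility argument.
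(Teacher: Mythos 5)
The paper offers no proof of this statement: it is quoted directly from \cite[Theorem 1.2]{BZGO}, so the only thing to assess is whether your argument would actually establish it. It would not, because the first reduction rests on a false claim.

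You deduce conservativity of $\C \mapsto \C^N$ on $G$-mod by factoring through $N$-mod and asserting that $(-)^N \colon N\text{-mod} \to \DGCatContk$ is an equivalence ``since $N$ is unipotent.'' Unipotence gives you that $\text{oblv}\colon \C^N \to \C$ is fully faithful, that invariants agree with coinvariants, and that $\text{Vect}^N \simeq \D(BN) \simeq \text{Vect}$; it does \emph{not} give you that the trivial module $\text{Vect}$ generates $N$-mod, and it does not. For any nontrivial additive character $\psi\colon N \to \mathbb{G}_a$, the category $\text{Vect}_\psi$ (the one-dimensional category with action twisted by the character sheaf $\mathcal{L}_\psi$) is a nonzero $N$-category with $\text{Vect}_\psi^N \simeq 0$ --- this is exactly the mechanism of \cref{No Simultaneously Monodromic and Twisted Monodromic Ga Objects} and \cref{NoMonodromicWhittakerForVect} in this paper. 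So $(-)^N$ is not conservative on $N$-mod, and conservativity on $G$-mod cannot be formal in $N$ alone: it is precisely the nontrivial ``highest weight'' content of the theorem, and the proof in \cite{BZGO} genuinely uses the ambient reductive group (the Bruhat decomposition of $G/N$, the interplay between $N$-invariants, the twisted $B_\lambda$-invariants, and Whittaker invariants). Your Barr--Beck step also quietly assumes what is to be proved: identifying the comonad as trivial is exactly the assertion that the counit $\D(G/N)\otimes_{\D(N\backslash G/N)} \C^N \to \C$ is an equivalence. As for the second reduction, your diagnosis of the obstacle is accurate --- weak torus invariants is not a formal Morita equivalence, as $\text{Vect}^{T,w} \simeq \text{Rep}(T)$ shows, and one must use the residual $\characterlatticeforT$-action recovering $\C$ from $\C^{T,w}$ --- but you explicitly stop short of carrying it out. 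So neither half of the argument is complete, and the first half as written is wrong.
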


\noindent Therefore, to understand results on $G$-categories, it suffices to understand the monoidal category $\D(N\backslash G/N)^{T \times T, w}$. In particular, via application of \cite[Theorem 1.3]{BZGO}, we may similarly understand nondegenerate $G$-categories via understanding the localized monoidal category $\D(N\backslash G/N)_{\text{nondeg}}^{T \times T, w}$. 

We are now in a position to recast \cref{NondegGCatsTheorem} as an equivalence of monoidal categories: 

\begin{Theorem}\label{Main Monoidal Equivalences}
    There are monoidal equivalences of categories
    \raggedbottom
    \begin{equation}\label{First Main Moniodal Equivalence}\D(N\backslash G/N)_{\text{nondeg}}^{T \times T, w} \simeq \IndCoh(\Gamma_{\Wext}) \simeq \IndCoh(\LTd \times_{\LTd\sslash \Wext} \LTd)\end{equation}
    \raggedbottom
    \begin{equation}\label{Second Line of Equivalence}\D(N\backslash G/N)_{\text{nondeg}} \simeq \IndCoh(\LTd/\characterlatticeforT \times_{\LTd\sslash \Wext} \LTd/\characterlatticeforT)\end{equation}
   \noindent which are $t$-exact up to cohomological shift. 
   
\end{Theorem}

In particular, the formalism of \cite[Theorem 1.1]{BZG} applies\footnote{For adjoint $G$, this particular example for the category $\IndCoh(\Gamma_{\Wext})$ is, in fact, given in \cite[Section 2.7.3]{BZG}. The new input is here is providing a description of $\IndCoh(\Gamma_{\Wext})$ in terms of $\D$-modules on $G$, see \cref{Main Monoidal Equivalences}.} and we obtain an $\mathbb{E}_2$ functor $\text{IndCoh}(\LTd\sslash\Wext) \to \mathcal{Z}(\D(G)_{\text{nondeg}}) \simeq \D(G)_{\text{nondeg}}^G$, where given a monoidal category $\mathcal{A}$, $\mathcal{Z}(\mathcal{A})$ denotes its \textit{center} $\mathcal{Z}(\mathcal{A}) := \uEnd_{\mathcal{A} \times \mathcal{A}}(\mathcal{A})$. Using this, we may consider the eigencategories for any nondegenerate $G$-category over the category $\IndCoh(\LTd\sslash\Wext)$, see \cite[Section 2.8.2]{BZG}. 

\begin{Remark}
The statement of \cref{Main Monoidal Equivalences} can be interpreted at the level of abelian categories as follows, which we state for $G$ adjoint type for the ease of exposition. Let $\Gamma_{\Waff}$ be the union of the graphs of the affine Weyl group $\Waff$. Then $\Gamma_{\Waff}$ is an ind-scheme, and so, in particular, every compact object in $\IndCoh(\Gamma_{\Waff})$ can be realized as the pushforward $i_{S, *}^{\IndCoh}(\mathcal{F}_S)$ for $i_S: \Gamma_S \to \Gamma_{\Waff}$ the closed embedding of the union of some finite collection graphs of the affine Weyl group, and $\F_S$ an object of the abelian category of coherent sheaves on $\Gamma_S$ \cite[Chapter 3, Section 1]{GaRoII}. Therefore, every object of $\D(N\backslash G/N)^{T \times T, w, \heartsuit}$ admits a quotient which can be viewed as a filtered colimit of such sheaves. 
\end{Remark}

\begin{Remark}
We can also interpret nondegeneracy as a localization of 2-categories 
\raggedbottom
\[\D(G)\text{-mod} \to \D(G)\text{-mod}_{\text{nondeg}}.\] 

\noindent This perspective may prove useful in the local geometric Langlands correspondence, which studies twisted representations of the \textit{loop group}. Our localization can be interpreted as an upgraded version of the functor $\C \mapsto \text{Whit}(\C)$. The functor Whit is of importance to the local geometric Langlands program, see \cite{Ras2}. However, one does not need knowledge of this program for the results below. 
\end{Remark}

\begin{Remark}
This result, along with \cref{NondegGCatsTheorem}, admits an interpretation in the theory of 2 ind-coherent sheaves, in upcoming work of Arinkin-Gaitsgory and di Fiore-Stefanich \cite{DS}. In this vein, an informal interpretation of \cref{NondegGCatsTheorem} is that we can identify a generic part of $G$-categories as free of rank one over $\IndCoh(\LTd\sslash\Wext)$, and furthermore we have complete understanding of the singular support behavior which can occur. 
\end{Remark}

\begin{Remark}\label{Remark on Ngo Functor of BZG}
In \cite{BZG}, a $\mathbb{E}_2$-functor $\text{Ng\^o}_{\hbar}$ from the cohomologically sheared Whittaker-Hecke category $\Hpsi^{\hbar}$ to the cohomologically sheared category $\mathcal{D}_{\hbar}(G/G)$ is constructed using the derived, loop rotation equivariant geometric Satake of \cite{BezrukavnikovFinkelbergEquivariantSatakeCategoryandKostantWhittakerReduction}. Using this, the authors also sketch an argument that there is an $\mathbb{E}_2$-functor $\text{Ng\^o}: \Hpsi \to \D(G/G)$. Assuming the existence of such an $\mathbb{E}_2$-functor, we would obtain that \textit{all} $G$-categories diagonalize over $\LTd\sslash \Wext$. The idea that such categories with a strong $G$-action should diagonalize over $\LTd\sslash \Wext$ is implicitly used in the proofs below, and was inspired by \cite{BZG}. 
\end{Remark}
\newcommand{\hc}{\text{hc}}
\newcommand{\hcW}{\tilde{hc}}
\newcommand{\ch}{\text{ch}}
\newcommand{\chW}{\tilde{ch}}

\subsection{Equivariance on Very Central $\mathcal{D}$-modules on $G$}\label{ParabolicInductionAndRestriction}
Using the ideas of categorical representation theory, we can also provide some evidence for a recent conjecture of Ben-Zvi and Gunningham on the essential image of enhanced parabolic restriction, which we state explicilty below after recalling some preliminaries.

\subsubsection{The Horocycle Functor and Parabolic Restriction}Consider the category 
\raggedbottom
\[\mathcal{Z}(\D(G)) := \uEnd_{G \times G}(\D(G)) \simeq \D(G)^G_.\] Here, as with all invariants in this subsection, $G$ is acting via the adjoint action. This category is canonically the center of all categories with a $G$-action. Associated to it is a functor known as \textit{parabolic restriction} $\text{Res}: \D(G)^G \to \D(T)^T$. For an excellent survey on parabolic restriction in many of its guises in representation theory, see \cite{KalSaf}. We will define parabolic restriction in terms of a related functor, known as the \textit{horocycle functor} hc, which is defined as the composite:
    \raggedbottom
    \[\D(G)^G \xrightarrow{\text{oblv}^G_B} \D(G)^B \xrightarrow{\text{Av}_*^{N \times N}} \D(N\backslash G/N)^{T}_.\]
    
    \noindent Let $i: N\backslash B/N \xhookrightarrow{} N\backslash G/N$ denote the closed embedding.
    
    \begin{Definition}
    The \textit{parabolic restriction} functor is the composite 
    \raggedbottom
    \[\D(G)^G \xrightarrow{\text{hc}} \D(N\backslash G/N)^T \xrightarrow{i^!} \D(T)_.^T\]
    \end{Definition}
    
    It was proved that parabolic restriction is $t$-exact in \cite{BezYom}. In particular, parabolic restriction induces an exact functor of abelian categories
    \raggedbottom
    \[\text{Res}: \D(G)^{G, \heartsuit} \to \D(T)^{T, \heartsuit}_.\]
    
    \noindent These abelian categories can often be easier to work with than their corresponding derived counterparts. For example, a standard argument (see, for example \cite[Section 10.3]{RaskinAffineBBLocalization}) shows the forgetful functor identifies $\D(G)^{G, \heartsuit}$ as a full abelian subcategory of $\D(G)^{\heartsuit}$ for $G$ any connected algebraic group. On the other hand, $\D(G)^G$ is not the derived category of its heart for any nontrivial reductive $G$, see \cite[Proposition 1.3.3.7, Dual Version]{LuHA} for the particular property which fails. 
    
    Let $\text{Ind}$ denote the left adjoint to parabolic restriction, known as \textit{parabolic induction}. At the level of abelian categories, it was shown in \cite[Section 3.2]{ChenOnTheConjecturesofBravermanKazhdan} that if $\F \in \D(T)^{W, \heartsuit} \simeq \D(T)^{T \rtimes W, \heartsuit}$, then the sheaf 
    \raggedbottom
    \[\text{Ind}(\text{oblv}^W(\F)) \in \D(G)^{G, \heartsuit}\]
    
    \noindent acquires a canonical $W$-representation functorial in $\F$. Using this, it is standard to show that one can lift parabolic restriction to a functor
    \raggedbottom
    \[\parabolicrestrictionLIFTED: \D(G)^{G, \heartsuit} \to \D(T)^{W, \heartsuit}\]
    
    \noindent which Ginzburg computed explicitly and showed identifies $\D(T)^{W, \heartsuit}$ with a quotient category of $\D(G)^{G, \heartsuit}$ in \cite[Theorem 4.4]{GinzburgParabolicInductionandtheHarishChandraDModule}. 
    
    \subsubsection{Very Central $\mathcal{D}$-Modules}
    While parabolic restriction in general has many interesting properties, it is \textit{not} monoidal in general. However, a standard argument (see \labelcref{MonoidalityOfHorocycle} below) gives that the horocycle functor \textit{is} monoidal. This suggests that a distinguished role is played by those sheaves $\F \in \D(G)^{G, \heartsuit}$ for which one can recover $\horocycleFunctor(\F)$ from $\Res(\F)$, which, following \cite{BZG}, we call \textit{very central}:
    
    \begin{Definition}
    We say a sheaf $\F \in \D(G)^{G, \heartsuit}$ is \textit{very central} if $\text{oblv}^T \circ \text{hc}(\F) \in \D(N\backslash G/N)$ is supported on $N\backslash B/N$. 
    \end{Definition}
    
     We let $\mathcal{V}$ denote the category of very central $\D$-modules, which is an abelian category by the $t$-exactness of parabolic restriction. These $\D$-modules on $G$ have recently appeared in the \'etale setting in works of Chen. Specifically, in \cite{ChenAVanishingConjecturetheGLnCase}, the author argues that the acyclicity of $\rho$-Bessel sheaves follows from the very centrality of certain sheaves obtained from enhanced parabolic induction on the \'etale analogue of sheaves in $\D(T)^W$ which descend to the coarse quotient. This very centrality is proved in \cite{ChenOnTheConjecturesofBravermanKazhdan}. 
     
     In \cite{BZG}, the authors conjecture that very central $\D$-modules are precisely those given by the Ng\^o functor (discussed in \cref{Remark on Ngo Functor of BZG}) at the level of abelian categories.\footnote{This also justifies the term \lq very central,\rq{} since the abelian category of very central $\D$-modules is expected to be a \textit{symmetric} monoidal subcategory of an abelian category which is only braided monoidal.} We state the following formulation of the conjecture here: 
    
\begin{Conjecture}\label{BZGConjecture}  \cite[Conjecture 2.14(2)]{BZG} 
The restricted functor of abelian categories 
\raggedbottom
\[\parabolicrestrictionLIFTED: \mathcal{V} \to \D(T)^{W, \heartsuit}\]

\noindent has essential image given by those sheaves descending to the coarse quotient $\LTd\sslash \Wext$. 
\end{Conjecture}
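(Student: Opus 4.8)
The plan is to establish the inclusion of essential images predicted by \cref{BZGConjecture}: that for every very central $\F \in \V$ the $W$-equivariant sheaf $\parabolicrestrictionLIFTED(\F) \in \D(T)^{W,\heartsuit}$ descends to the coarse quotient $\LTd\sslash\Wext$, together with an intrinsic construction of this equivariant structure. The idea is to factor $\parabolicrestrictionLIFTED|_{\V}$ through the bi-Whittaker Hecke category $\Hpsi$ and then invoke \cref{Mellin Transform for biWhittaker Sheaves}: under the Mellin transform, the fully faithful functor $\AvNShiftedAndLifted : \Hpsi \hookrightarrow \D(T)^W$ has essential image precisely the $\Wext$-equivariant sheaves on $\LTd$ that descend to $\LTd\sslash\Wext$. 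Thus it suffices to produce, functorially in $\F$, an object $\text{bWhit}(\F) \in \Hpsi$ together with an identification $\AvNShiftedAndLifted(\text{bWhit}(\F)) \simeq \parabolicrestrictionLIFTED(\F)$; working at the level of hearts throughout is harmless by the $t$-exactness assertions of \cref{Mellin Transform for biWhittaker Sheaves} and \cref{Our Argument Shows a t-Exact Equivalence of IndCoh on Coarse Quotient and Hpsi}.

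The mechanism producing the candidate $\text{bWhit}(\F)$ is the central action of $\D(G)^G$. This category acts centrally on every category with a $G$-action, in particular on the left $G$-category $\C := \D(G)^{N^-,-\psi}$ considered in \cref{BBMExample}, which is nondegenerate and satisfies $\C^{N^-,\psi}\simeq\Hpsi$ and $\C^N\simeq\D(T)$ by \cref{SupportOfWhittakerSheaves} and \cref{NoMonodromicWhittaker}. Because the action is central it commutes with the formation of $N$- and $(N^-,\psi)$-invariants, hence with the $W$-action on $\C^N\simeq\D(T)$ of \cref{WActionOnWhittaker} and with the functor $\AvNShiftedAndLifted:\C^{N^-,\psi}\hookrightarrow\C^{N,W}$ of \cref{N Averaging Fully Faithful on Whittaker Subcategory} and \cref{Mellin Transform for biWhittaker Sheaves}. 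Evaluating the resulting central endofunctors on the monoidal unit of $\Hpsi$ and on its $N$-averaging yields a commuting triangle of functors out of $\D(G)^G$: a bi-Whittaker averaging (i.e. Kostant--Whittaker reduction) functor $\text{bWhit} : \D(G)^G \to \Hpsi$, a functor $R : \D(G)^G \to \D(T)$, and a $W$-equivariant refinement $\widetilde R : \D(G)^G \to \D(T)^W$ with $\widetilde R \simeq \AvNShiftedAndLifted \circ \text{bWhit}$ by construction.

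The geometric heart of the argument is then the identification, up to a cohomological shift, of $R$ with parabolic restriction $\Res$ and of $\widetilde R$ with the lift $\parabolicrestrictionLIFTED$ computed by Ginzburg in \cite{GinzburgParabolicInductionandtheHarishChandraDModule} (equivalently the lift of \cite{ChenOnTheConjecturesofBravermanKazhdan}). One compares the bi-Whittaker averaging of $\F \in \D(G)^G$ with the horocycle sheaf $\horocycleFunctor(\F) \in \D(N\backslash G/N)^T$ via a support computation on $G$ along the Bruhat decomposition; here the very centrality of $\F$ is used essentially, since it forces $\horocycleFunctor(\F)$ to be supported on $N\backslash B/N$, so that $i^!\horocycleFunctor(\F)=\Res(\F)$ loses no information and the comparison with the Whittaker picture is an equivalence rather than a one-sided map. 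One then checks that the $W$-equivariance carried by $\widetilde R$, which originates through \cref{N Averaging Fully Faithful on Whittaker Subcategory} from the Gelfand--Graev action of \cref{WeylGroupAction}, matches the one underlying $\parabolicrestrictionLIFTED$. Granting this, $\parabolicrestrictionLIFTED(\F)\simeq\widetilde R(\F)\simeq\AvNShiftedAndLifted(\text{bWhit}(\F))$ lies in the essential image of $\AvNShiftedAndLifted$, hence descends to $\LTd\sslash\Wext$ by \cref{Mellin Transform for biWhittaker Sheaves}; passing to hearts gives the statement for $\V$.

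I expect the main obstacle to be exactly this last identification: matching bi-Whittaker (Kostant--Whittaker) averaging with the horocycle/parabolic-restriction functor as $W$-equivariant functors, including the bookkeeping of cohomological shifts and the reconciliation of the two a priori distinct sources of the $W$-action on $\D(T)$ (the Gelfand--Graev action versus Ginzburg's construction). The support computation relating $\horocycleFunctor(\F)$ to Whittaker averaging is the place where very centrality is indispensable rather than merely convenient. Finally, this strategy only yields the inclusion of the essential image of $\parabolicrestrictionLIFTED|_{\V}$ into the descending sheaves; the reverse inclusion in \cref{BZGConjecture} — that every sheaf descending to $\LTd\sslash\Wext$ arises from a very central $\D$-module — would require constructing a section of $\parabolicrestrictionLIFTED|_{\V}$, presumably via a Ng\^o-type functor as in \cref{Remark on Ngo Functor of BZG}, and is left open here.
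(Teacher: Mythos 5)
You are attempting to prove \cref{BZGConjecture}, which is a conjecture of Ben-Zvi--Gunningham that the paper does \emph{not} prove: it remains open, and the paper only offers evidence for it in the form of \cref{Parabolic Restriction of a Very Central Sheaf in Heart Has W-Equivariant Structure Descending to Coarse Quotient}, proved in \cref{The Nondegenerate Horocycle Functor Section}. Your proposal likewise does not prove the conjecture, and the two places where it falls short are genuine gaps rather than bookkeeping. First, the containment you do sketch --- that $\parabolicrestrictionLIFTED(\F)$ descends to $\LTd\sslash\Wext$ for $\F\in\V$ --- is conditional on identifying $\AvNShiftedAndLifted\circ\text{bWhit}$ with $\parabolicrestrictionLIFTED$ \emph{as $W$-equivariant functors} on $\V$. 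Without that identification you only show that $\text{oblv}^W\Res(\F)$ admits \emph{some} $W$-equivariant structure descending to the coarse quotient, which is exactly the weaker statement \cref{Parabolic Restriction of a Very Central Sheaf in Heart Has W-Equivariant Structure Descending to Coarse Quotient}; the conjecture concerns the specific equivariance of Ginzburg and Chen, and reconciling the Gelfand--Graev equivariance with theirs is carried out neither in your proposal nor in the paper. Second, the reverse containment --- that every descending sheaf is in the image --- is precisely what keeps the statement a conjecture; as you note, it would require a section of $\parabolicrestrictionLIFTED|_{\V}$, e.g.\ a Ng\^o-type functor as in \cref{Remark on Ngo Functor of BZG}, which is not unconditionally available.

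For the partial result, your route also differs from the paper's. You factor through $\Hpsi$ and the essential-image characterization of \cref{Mellin Transform for biWhittaker Sheaves}, constructing $\text{bWhit}$ from the central action of $\D(G)^G$; this requires verifying that the central action commutes with the Gelfand--Graev $W$-action and with $\AvNShiftedAndLifted$, which is plausible but not free. The paper instead never leaves the horocycle picture: it lifts $J^!\circ\horocycleFunctor$ to a functor $\tilde\Psi$ valued in diagonally $W$-equivariant objects by identifying the image of the monoidal unit with $\Delta_*^{\IndCoh}(\omega_{\LTd/\characterlatticeforT})$ under \cref{Main Monoidal Equivalences}, uses very centrality to transport the equivariance to $\Res(\F)$ via the $W$-equivariant functor $\Avpsi$, and checks descent root by root using \cref{New Various Conditions for Wext Equivariant Sheaf to Satisfy Coxteter Descent}(3) together with \cref{Action of Order Two Simple Reflection Group on Associated G_m Invariants is Trivial}. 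The paper's route avoids having to match two $W$-equivariant structures, at the cost of proving only the existence of an equivariance rather than a statement about $\parabolicrestrictionLIFTED$ itself.
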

    
In \cref{Horocycle Functor Section}, we provide some evidence for \cref{BZGConjecture}. Specifically, we show: 
    
\begin{Theorem}\label{Parabolic Restriction of a Very Central Sheaf in Heart Has W-Equivariant Structure Descending to Coarse Quotient}
If $\mathcal{F} \in \D(G)^{G, \heartsuit}$ is very central, then there is a $W$-equivariant structure on $\text{Res}(\mathcal{F})$ such that 
\raggedbottom
\[\text{oblv}^T(\text{Res}(\mathcal{F})) \in \D(T)^{W, \heartsuit}\]

\noindent descends to the coarse quotient.
\end{Theorem}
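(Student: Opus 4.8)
The plan is to realize $\text{Res}(\F)$, together with a $W$-equivariant structure, inside the essential image of the functor $\AvNShiftedAndLifted\colon\Hpsi\hookrightarrow\D(T)^W$, and then to appeal to \cref{Mellin Transform for biWhittaker Sheaves}. That theorem identifies $\AvNShiftedAndLifted$, after transport along the Mellin transform and the equivalence $F'$, with $\pi^!\colon\IndCoh(\LTd\sslash\Wext)\to\IndCoh(\LTd/\Wext)$, which is fully faithful with essential image exactly the sheaves descending to the coarse quotient; so it suffices to exhibit, functorially in the very central $\F$, an object $\kappa(\F)\in\Hpsi$ and an isomorphism $\AvNShiftedAndLifted(\kappa(\F))\simeq\text{Res}(\F)$ in $\D(T)^W$ up to cohomological shift. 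The shift is then harmless: parabolic restriction is $t$-exact (\cite{BezYom}), $\F$ lies in the heart, and by \cref{Our Argument Shows a t-Exact Equivalence of IndCoh on Coarse Quotient and Hpsi} we may pass to the hearts of the relevant $t$-structures, where the shift is invisible.

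First I would construct $\kappa$. Since $\D(G)^G\simeq\mathcal{Z}(\D(G))$ is the center of the $2$-category of $G$-categories, $\F$ acts by a natural endofunctor on the nondegenerate $G$-category $\C:=\D(G)^{N^-,-\psi}$ of \cref{BBMExample}, hence on all of its invariants, and these actions are compatible with the averaging functors between invariants. Because $\Hpsi\simeq\C^{N^-,\psi}$ is symmetric monoidal (\cref{Whittaker Hecke Category is Symmetric Monoidal}), any natural endofunctor is convolution with a canonical object, so we may set $\kappa(\F):=\F\cdot\mathbf{1}_{\Hpsi}\in\Hpsi$ (the bi-Whittaker, or Kostant--Whittaker, reduction of $\F$). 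Using the identification $\text{oblv}^W\circ\AvNShiftedAndLifted\simeq\AvN[\dim N]$, compatibility of the central action with $\AvN\colon\C^{N^-,\psi}\to\C^N$, and \cref{SupportOfWhittakerSheaves} (so that $\C^N\simeq\D(T)$ and $\AvN(\mathbf{1}_{\Hpsi})$ is a shift of $\omega_T$), we get $\text{oblv}^W\AvNShiftedAndLifted(\kappa(\F))\simeq\F\cdot\AvN(\mathbf{1}_{\Hpsi})[\dim N]$, i.e.\ the central endofunctor $\F\cdot(-)$ applied to the unit object of $\C^N\simeq\D(T)$.

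It remains to identify this with $\text{Res}(\F)$ up to shift, and here very centrality is the crucial input. Unwinding the central action through the monoidal horocycle functor $\text{hc}\colon\D(G)^G\to\D(N\backslash G/N)^T$ --- which is precisely the mechanism by which the center acts on $N$-invariants --- the endofunctor $\F\cdot(-)$ of $\C^N$ is convolution with $\text{hc}(\F)$ along the middle copy of $G$. When $\F$ is very central, $\text{hc}(\F)$ is supported on the closed stratum $N\backslash B/N\cong T$, so this convolution collapses, after accounting for the $\rho$-shift and the $w_0$-twist inherent in the $T\times T$-equivariant structure, to the $!$-tensor action of $i^!\text{hc}(\F)=\text{Res}(\F)$ on the unit of $\D(T)$. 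Hence $\text{oblv}^W\AvNShiftedAndLifted(\kappa(\F))\simeq\text{Res}(\F)$ up to shift, so $\AvNShiftedAndLifted(\kappa(\F))$ is a $W$-equivariant lift of a shift of $\text{Res}(\F)$; transporting this $W$-equivariant structure and invoking the first paragraph shows that $\text{oblv}^T(\text{Res}(\F))$ descends to $\LTd\sslash\Wext$.

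The main obstacle is this last identification. One must carefully reconcile the abstractly defined central action of $\D(G)^G$ on the Whittaker model $\C$ with the explicit horocycle and parabolic restriction functors, keeping track of every cohomological shift, the $\rho$-twist distinguishing the $(N,\psi)$- from the $(N^-,\psi)$-conventions, and the longest element $w_0$ built into the $T\times T,w$-equivariance; and one should check that the $W$-equivariant structure produced in this way agrees with the one entering $\parabolicrestrictionLIFTED$ --- for instance via the $W$-action recalled in \cref{WActionOnWhittaker} and Ginzburg's computation in \cite{GinzburgParabolicInductionandtheHarishChandraDModule}, in harmony with \cite{ChenOnTheConjecturesofBravermanKazhdan} --- so that the statement genuinely provides evidence for \cref{BZGConjecture}. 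A secondary subtlety is that $\D(G)^G$ is not the derived category of its heart, so the whole construction is carried out at the derived level and only then restricted to $\F\in\D(G)^{G,\heartsuit}$, which is legitimate by the $t$-exactness of $\text{hc}$ and $\text{Res}$.
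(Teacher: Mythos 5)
Your strategy is genuinely different from the paper's, and I believe it works. You realize $\Res(\F)$ as $\AvNShiftedAndLifted(\kappa(\F))$ for $\kappa(\F):=\F\cdot\mathbf{1}_{\Hpsi}\in\Hpsi$ and let descent come for free from \cref{All Objects of Lifted AvN descend to the coarse quotient} (equivalently, from the identification of the essential image of $\AvNShiftedAndLifted$ with $\pi^!(\IndCoh(\LTd\sslash\Wext))$ in \cref{Mellin Transform for biWhittaker Sheaves}); the only real work is the chain $\text{oblv}^W\AvNShiftedAndLifted(\kappa(\F))\simeq\F\cdot\AvNshifted(\mathbf{1}_{\Hpsi})\simeq\hc(\F)\star(-)$ applied to the unit, which collapses to $\Res(\F)$ by very centrality. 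The paper instead never forms $\kappa(\F)$: it constructs the $W$-equivariance directly on $J^!\hc(\F)$ via the nondegenerate horocycle functor $\tilde{\Psi}$ of \cref{Claims About Horocycle Functor as G x G Categorical Functor} (by the integral-kernel argument applied to $\hc(\delta_{\Delta_G})=\delta_1$, which corresponds to $\Delta_*^{\IndCoh}\omega_{\LTd/\characterlatticeforT}$ under \cref{Main Monoidal Equivalences}), recovers $\text{oblv}^T\Res(\F)$ by applying the $W$-equivariant functor $\Avpsi$, and then verifies descent root by root via criterion (3) of \cref{New Various Conditions for Wext Equivariant Sheaf to Satisfy Coxteter Descent}, resting on the triviality of the Gelfand--Graev action on $\mathbb{G}_m^{\alpha}$-equivariant nondegenerate categories (\cref{Action of Order Two Simple Reflection Group on Associated G_m Invariants is Trivial}). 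Your route buys an automatic descent statement and a cleaner conceptual packaging (the object $\kappa(\F)$ is the bi-Whittaker/Kostant reduction of $\F$, directly relevant to \cref{BZGConjecture}); the paper's route avoids having to identify the central action of $\D(G)^G$ on $\C^N$ with convolution by $\hc(\F)$ applied to a specific unit, at the cost of the explicit $\mathbb{G}_m^{\gamma}$-averaging check.

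One concrete slip to fix: $\AvNshified$ of the unit $\mathbf{1}_{\Hpsi}$ is the skyscraper $\delta_1\in\D(T)$, not a shift of $\omega_T$ (see the proofs of \cref{MonoidalityOfAvN} and \cref{Averaging Essential Image of AvN Has Trivial Representation of Order Two Reflection Group}), and the residual action of $i_{*,dR}\Res(\F)$ on $\C^N\simeq\D(T)$ through the open cell of \cref{SupportOfWhittakerSheaves} is group convolution over $T$, not $!$-tensor. Your two misidentifications happen to cancel ($\Res(\F)\otimes^!\omega_T\simeq\Res(\F)\star_T\delta_1\simeq\Res(\F)$), so the conclusion survives, but in an argument you yourself flag as requiring every shift and twist to be tracked, this should be stated correctly. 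Also note that the theorem only asserts the existence of \emph{some} $W$-equivariant structure with the descent property, so your worry about matching the equivariance entering $\parabolicrestrictionLIFTED$ is not needed for the statement as written.
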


We note that, while nondegeneracy and Whittaker invariants are used heavily in the proof of \cref{Parabolic Restriction of a Very Central Sheaf in Heart Has W-Equivariant Structure Descending to Coarse Quotient}, the statement of \cref{Parabolic Restriction of a Very Central Sheaf in Heart Has W-Equivariant Structure Descending to Coarse Quotient} uses neither. 
\subsection{Outline of Paper}\label{PaperOutline}
In \cref{Preliminaries Section}, we review some conventions and prove a result on comonadicity which will be used later. In \cref{ProofOfFirstTheorem}, using the foundations on nondegenerate $G$-categories developed in the companion paper \cite{GannonNew}, we prove \cref{Mellin Transform for biWhittaker Sheaves} and \cref{N Averaging Fully Faithful on Whittaker Subcategory}. In \cref{ProofOfMainTheorem}, we prove \cref{Main Monoidal Equivalences}. In \cref{The Nondegenerate Horocycle Functor Section}, we then prove \cref{Parabolic Restriction of a Very Central Sheaf in Heart Has W-Equivariant Structure Descending to Coarse Quotient}. This paper also contains one appendix, \cref{Mellin Transform Appendix}, which is written jointly with Germ\'an Stefanich and upgrades the classical Mellin transform to symmetric monoidal equivalence of DG categories.
\subsection{Acknowledgements}
I am especially grateful to my advisor, Sam Raskin, who originally posed the question of whether the ideas of groups acting on categories could be used to extend the results of \cite{Lo} and \cite{Gin} to a general expression of the form \labelcref{GeneralAvSetup}, and provided a tremendous amount of encouragement and useful insights along the way. Additionally, I would like to thank Rok Gregoric for patiently explaining the details of the theory of $\infty$-categories. I would also like to thank David Ben-Zvi, Justin Campbell, Tsao-Hsien Chen, Desmond Coles, Dennis Gaitsgory, Sanath Devalapurkar, Gurbir Dhillon, Victor Ginzburg, Sam Gunningham, Natalie Hollenbaugh, Gus Lonergan, Kendric Schefers, Brian Shin, Germ\'an Stefanich, Harold Williams, and Yixian Wu for many interesting and useful conversations. Additionally, I would like to thank the referee for a careful reading of this paper as well as for useful comments.
 
This project was completed while I was a graduate student at the University of Texas at Austin, and I would like to thank everyone there for contributing to such an excellent environment.  

\section{Preliminaries}\label{Preliminaries Section}

\subsection{Conventions}\label{Conventions Subsection}
We summarize the conventions we use here; we also give a more detailed discussion of our conventions and the motivation behind them in \cite[Section 2.1]{GannonNew}. We let $k$ denote a field of characteristic zero. Unless otherwise stated, we use the term \lq category\rq{} to refer to a DG category, or in other words a $k$-linear presentable stable $\infty$-category. Additionally, unless otherwise stated, we assume all functors between (DG) categories commute with colimits. We will always emphasize when a category we are working with is not a DG category: one prominent example which will play a key role for us is the \textit{eventually coconnective} subcategory $\C^{+} := \cup_{n \in \mathbb{Z}^{\geq 0}}\C^{\geq -n}$ of a (DG) category $\C$ equipped with a $t$-structure, which we view as a non-cocomplete DG category.

\subsection{Comonadicity} We now recall the notion of comonadicity and prove some elementary results related to it which will be used in what follows. 
\subsubsection{Barr-Beck-Lurie} In the proof of \cref{Main Monoidal Equivalences}, we will use the Barr-Beck-Lurie theorem. We will recall the result here for the reader's convenience--this is summarized in much more depth and proved in \cite[Theorem 4.7.3.5]{LuHA}. 

Given a functor of (not necessarily DG) infinity categories $L: \C \to \D$ which admits a right adjoint $R: \D \to \C$, we can obtain a comonad in $\D$ which we denote $LR$. The functor $L$ canonically lifts to a functor $L^{\text{enh}}: \C \to LR\text{-comod}(\D)$. 
    
\begin{Definition}
We say that $L$ is \textit{comonadic} if $L^{enh}$ is an equivalence. 
\end{Definition}
    
\begin{Theorem}\label{BarrBeck}(Barr-Beck-Lurie Theorem for Comonads) The following are equivalent:
\begin{itemize}
    \item The functor $L$ is comonadic.
    \item The functor $L$ is conservative, and moreover for any $L$-split cosimplicial object $C^{\bullet}$ of $\C$, the totalization of $C^{\bullet}$ exists\footnote{Observe that if $\C$ and $\D$ are DG categories then these totalizations always exist. This is because, by definition, any DG category is cocomplete and thus, by the conventions as in \cite[Section 1.5.1.5]{GaRoI}, all DG categories are presentable; therefore DG categories are closed under totalizations by \cite[Corollary 5.5.2.4]{LuHTT}.} in $\C$ and $L$ preserves this totalization in the sense that the canonical map $L(\text{Tot}(C^{\bullet})) \to \text{Tot}(L(C^{\bullet}))$ is an equivalence.
\end{itemize}
\end{Theorem}

\subsubsection{A Comonadicity Condition} In this section, we state and prove a condition for the comonadicity of functors, see \cref{ComonadicityCorollary}. The results of this subsection are modifications of ideas contained in the proof of \cite[Proposition 3.7.1]{Ras3}.

\begin{Proposition}\label{ComonadicityMiracleFact}
If $L: \C \to \D$ is any functor of DG categories equipped with $t$-structures such that the $t$-structures on $\C$ and $\D$ are right-complete and $L$ is $t$-exact, then the induced functor $L: \C^{\geq 0} \to \D^{\geq 0}$ commutes with arbitrary totalizations.
\end{Proposition}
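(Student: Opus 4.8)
The plan is to reduce the statement about arbitrary totalizations to a statement about finite (partial) totalizations, where $t$-exactness immediately applies, and then take a colimit. Recall that a totalization $\mathrm{Tot}(X^\bullet)$ is the limit of a cosimplicial object, which can be computed as the limit over the truncated cosimplicial diagrams: $\mathrm{Tot}(X^\bullet) \simeq \lim_n \mathrm{Tot}^{\leq n}(X^\bullet)$, where $\mathrm{Tot}^{\leq n}$ is the partial totalization (the limit over the subcategory $\Delta_{\leq n}$). Each $\mathrm{Tot}^{\leq n}$ is a \emph{finite} limit, and since $L$ is $t$-exact it is in particular exact, hence commutes with finite limits. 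So the content is entirely about comparing $\lim_n$ with the functor $L$, i.e. about the interaction of $L$ with countable cofiltered limits of a \emph{special} shape — those arising from Tot towers.

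First I would record the key structural input: for a cosimplicial object $X^\bullet$ valued in $\C^{\geq 0}$, the tower $\{\mathrm{Tot}^{\leq n}(X^\bullet)\}_n$ has the property that the successive fibers $\mathrm{fib}(\mathrm{Tot}^{\leq n} \to \mathrm{Tot}^{\leq n-1})$ are, up to shift by $[-n]$, given by the $n$-th normalized term of $X^\bullet$, which lies in $\C^{\geq 0}$; hence this fiber lies in $\C^{\geq -n}$ — in fact in $\C^{\geq n}$ after accounting for the shift, so the tower becomes "more and more connected". This is the standard convergence-of-the-Tot-tower estimate (cf. the Bousfield–Kan / Milnor-type analysis of cosimplicial limits). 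Consequently, for each fixed $m$, the truncation $\tau^{\leq m}\mathrm{Tot}^{\leq n}(X^\bullet)$ stabilizes as $n \to \infty$, so that $\tau^{\leq m}\mathrm{Tot}(X^\bullet) \simeq \tau^{\leq m}\mathrm{Tot}^{\leq n}(X^\bullet)$ for $n \gg m$. The same estimate holds in $\D^{\geq 0}$ for the cosimplicial object $L(X^\bullet)$, using $t$-exactness of $L$ to see its terms are in $\D^{\geq 0}$.

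Now I would assemble the argument: by right-completeness of the $t$-structure on $\C$ (hypothesis (1)), we have $\mathrm{Tot}(X^\bullet) \simeq \mathrm{colim}_m \tau^{\leq m}\mathrm{Tot}(X^\bullet) \simeq \mathrm{colim}_m \tau^{\leq m}\mathrm{Tot}^{\leq n(m)}(X^\bullet)$ for a suitable $n(m) \to \infty$; applying $L$, which is exact and hence commutes with the finite limits defining $\mathrm{Tot}^{\leq n(m)}$ and with the truncation $\tau^{\leq m}$ by $t$-exactness, and which commutes with the colimit over $m$ by continuity, yields $L(\mathrm{Tot}(X^\bullet)) \simeq \mathrm{colim}_m \tau^{\leq m}\mathrm{Tot}^{\leq n(m)}(L(X^\bullet))$. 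By the same stabilization estimate applied in $\D$, this equals $\mathrm{colim}_m \tau^{\leq m}\mathrm{Tot}(L(X^\bullet))$, which by right-completeness of $\D$ (hypothesis (1) again) is $\mathrm{Tot}(L(X^\bullet))$. This gives the desired equivalence $L(\mathrm{Tot}(X^\bullet)) \simeq \mathrm{Tot}(L(X^\bullet))$, compatibly with the canonical comparison map.

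The main obstacle I expect is making the connectivity/stabilization estimate for the Tot tower fully precise — i.e. verifying that $\mathrm{fib}(\mathrm{Tot}^{\leq n} \to \mathrm{Tot}^{\leq n-1})$ is $(n-1)$-connective (in cohomological terms, lies in $\C^{\geq n-\text{const}}$) when the cosimplicial object is valued in $\C^{\geq 0}$. This is where the cosimplicial combinatorics lives: one uses that the latching/matching description identifies this fiber with (a shift of) the $n$-th term of the normalized, or conormalized, cochain complex, and the Dold–Kan–type observation that passing from $X^\bullet$ to its normalization does not leave $\C^{\geq 0}$. Notably, hypothesis (3), conservativity of $L$ on the heart, plays \emph{no} role here — it is needed later (in \cref{ComonadicityCorollary}) for the Barr–Beck–Lurie application, not for this commutation statement — so I would flag that only (1) and (2) are used in this Proposition.
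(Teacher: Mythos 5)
Your proposal is correct and follows essentially the same route as the paper: the stabilization estimate $\tau^{\leq n}(\text{Tot}(\F^{\bullet})) \simeq \tau^{\leq n}(\text{Tot}^{\leq n+1}(\F^{\bullet}))$ that you identify as the key input is exactly the paper's \cref{TruncationIdentity} (cited from Raskin), and the remaining steps --- right-completeness on both sides, continuity of $L$ to commute it past the colimit of truncations, and $t$-exactness plus commutation with the finite limits defining the partial totalizations --- reproduce the paper's argument. Your observation that hypothesis (3) is not used here and only enters later in \cref{ComonadicityCorollary} is also accurate.
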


We first begin with a standard lemma on cosimplicial sets, whose proof can be found, for example, in the third paragraph of the proof of \cite[Proposition 3.7.1]{Ras3}:

\begin{Lemma}\label{TruncationIdentity}
For any DG category $\C$ equipped with a right-complete $t$-structure and any cosimplicial object $\F^{\bullet}$ of $\C$ such that $\F^i \in \C^{\geq 0}$ for all $i$, the totalization $\text{Tot}(\F^{\bullet})$ exists and we have the identity
\raggedbottom
\begin{equation*}
\tau^{\leq n}(\text{Tot}(\F^{\bullet})) \simeq \tau^{\leq n}(\text{Tot}^{\leq n + 1}(\F^{\bullet}))
\end{equation*}

\noindent where $\text{Tot}^{\leq n + 1}(\F^{\bullet})$ denotes the partial totalization, i.e. the limit over $\Delta_{\leq n + 1}$. 
\end{Lemma}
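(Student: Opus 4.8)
The plan is to compare $\text{Tot}(\F^\bullet)$ with its partial totalizations via the standard ``$\text{Tot}$-tower'', exploiting the fact that, for a coconnective cosimplicial object, the successive fibers of this tower become arbitrarily coconnective. Since $\C$ is presentable it admits all small limits, so $\text{Tot}(\F^\bullet) := \lim_{\Delta}\F^\bullet$ exists; and because $\Delta = \bigcup_m \Delta_{\leq m}$, this totalization is computed as the limit of the tower $\cdots \to \text{Tot}^{\leq m}(\F^\bullet) \to \text{Tot}^{\leq m-1}(\F^\bullet) \to \cdots \to \text{Tot}^{\leq 0}(\F^\bullet) = \F^0$. (The right-completeness hypothesis is what guarantees this limit of partial totalizations recovers $\text{Tot}(\F^\bullet)$ in the generality of \cite[Proposition 3.7.1]{Ras3}; in our presentable setting the totalization exists and is computed this way unconditionally.)

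The key input I would import is the identification of the fibers of this tower with shifts of the conormalization of $\F^\bullet$: the fiber of $\text{Tot}^{\leq m}(\F^\bullet) \to \text{Tot}^{\leq m-1}(\F^\bullet)$ is equivalent to $N^m(\F^\bullet)[-m]$, where $N^m(\F^\bullet)$ is the $m$-th conormalized term — the fiber of the $m$-th matching map, equivalently a retract of $\F^m$ under the dual of the Dold--Kan correspondence. This is the dual of the statement that the associated graded of the skeletal filtration of a simplicial object is given by the normalized chains, and it can be extracted from \cite[Proposition 3.7.1]{Ras3}. Since each $\F^i$ lies in $\C^{\geq 0}$, so does the retract $N^m(\F^\bullet)$, and hence $\text{fib}\big(\text{Tot}^{\leq m}(\F^\bullet) \to \text{Tot}^{\leq m-1}(\F^\bullet)\big) \in \C^{\geq m}$.

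Granting this, the conclusion is formal bookkeeping with the $t$-structure. For every $m \geq n+1$ the transition map $\text{Tot}^{\leq m}(\F^\bullet) \to \text{Tot}^{\leq m-1}(\F^\bullet)$ has fiber in $\C^{\geq m} \subseteq \C^{\geq n+1}$, hence becomes an equivalence after applying $\tau^{\leq n}$; composing these transition maps shows that the fiber of $\text{Tot}^{\leq m}(\F^\bullet) \to \text{Tot}^{\leq n+1}(\F^\bullet)$ is an iterated extension of the objects $N^j(\F^\bullet)[-j] \in \C^{\geq n+2}$ for $n+2 \leq j \leq m$, and therefore lies in $\C^{\geq n+2}$ (as $\C^{\geq n+2}$ is closed under extensions). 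Passing to the limit over $m$ and using that $\C^{\geq n+2}$ is closed under limits, I would conclude that $K := \text{fib}\big(\text{Tot}(\F^\bullet) \to \text{Tot}^{\leq n+1}(\F^\bullet)\big)$ lies in $\C^{\geq n+2}$, so $\tau^{\leq n}(K) \simeq 0$. Applying $\tau^{\leq n}$ to the fiber sequence $K \to \text{Tot}(\F^\bullet) \to \text{Tot}^{\leq n+1}(\F^\bullet)$ then yields the asserted identity $\tau^{\leq n}(\text{Tot}(\F^\bullet)) \simeq \tau^{\leq n}(\text{Tot}^{\leq n+1}(\F^\bullet))$.

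The main obstacle is the second step — the precise structure of the $\text{Tot}$-tower and in particular the connectivity estimate $\text{fib}(\text{Tot}^{\leq m}(\F^\bullet) \to \text{Tot}^{\leq m-1}(\F^\bullet)) \in \C^{\geq m}$. This is genuinely a fact about the combinatorics of $\Delta$ (the conormalization, equivalently the dual of the latching/skeletal filtration of simplicial objects), and it is exactly the point at which I would cite \cite[Proposition 3.7.1]{Ras3} rather than reprove it; once that structural statement is in hand, the passage to truncations is routine.
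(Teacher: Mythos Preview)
Your proposal is correct and is precisely the standard Tot-tower argument the paper defers to via its citation of \cite[Proposition 3.7.1]{Ras3}; the paper gives no independent proof beyond that reference. One small remark: the phrase ``applying $\tau^{\leq n}$ to the fiber sequence'' is slightly loose since $\tau^{\leq n}$ is not exact, but your actual claim (that $K \in \C^{\geq n+2}$ forces $\tau^{\leq n}(\text{Tot}) \xrightarrow{\sim} \tau^{\leq n}(\text{Tot}^{\leq n+1})$) follows immediately from the $t$-structure axioms, so the argument stands.
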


\begin{proof}[Proof of \cref{ComonadicityMiracleFact}]
We have
\newcommand{\Fbullet}{\F^{\bullet}}
\[L(\text{Tot}(\F^{\bullet})) \xleftarrow{\sim} L(\text{colim}_n(\tau^{\leq n}(\text{Tot}(\Fbullet)))) \]\[\simeq \text{colim}_nL(\tau^{\leq n}(\text{Tot}(\Fbullet))) \simeq \text{colim}_nL(\tau^{\leq n}(\text{Tot}^{\leq n + 1}(\Fbullet)))\]

\noindent where the first step uses the right-completeness of the $t$-structure of $\D$, the second uses the fact that all functors of DG categories are continuous, and the third uses \cref{TruncationIdentity}. We may continue this chain of equivalences to obtain
\raggedbottom
\[L(\text{Tot}(\F^{\bullet})) \simeq \text{colim}_n(\tau^{\leq n}(\text{Tot}^{\leq n + 1}(L\Fbullet))) \simeq \text{colim}_n(\tau^{\leq n}(\text{Tot}(L\Fbullet)))\]

\noindent where the first step follows from the fact that $L$ is $t$-exact and commutes with finite limits and the second equivalence follows from \cref{TruncationIdentity}. In particular, by the right-completeness of the $t$-structure of $\D$, we see that $L$ preserves these totalizations, as desired. 
\end{proof}

\begin{Corollary}\label{ComonadicityCorollary}
Let $\C, \D$ be DG categories equipped $t$-structures for which the $t$-structure on $\C$ and $\D$ are right-complete, and assume that $L: \C \to \D$ is a $t$-exact functor which admits a right adjoint $R$. Then if $L$ is conservative on $\C^{\heartsuit}$, the restricted functors $\C^{\geq 0} \to \D^{\geq 0}$ and $\C^+ \to \D^+$ are comonadic. 
\end{Corollary}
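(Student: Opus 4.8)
The plan is to deduce this from the Barr--Beck--Lurie theorem (\cref{BarrBeck}), applied to the adjunction $L \dashv R$ restricted to the eventually coconnective subcategories, with \cref{ComonadicityMiracleFact} supplying the one non-formal ingredient. First I would check that the adjunction restricts to these subcategories: since $L$ is $t$-exact it carries $\C^{\geq -n}$ into $\D^{\geq -n}$, hence $\C^+$ into $\D^+$; and since $L$ is in particular right $t$-exact, its right adjoint $R$ is left $t$-exact, so $R$ carries $\D^{\geq -n}$ into $\C^{\geq -n}$, hence $\D^+$ into $\C^+$. The unit and counit restrict accordingly, so that $L \colon \C^+ \to \D^+$ has right adjoint $R \colon \D^+ \to \C^+$ and one gets a comonad $LR$ on $\D^+$; comonadicity of $\C^+ \to \D^+$ is exactly the statement that the comparison functor $L^{\text{enh}} \colon \C^+ \to LR\text{-comod}(\D^+)$, formed in the setting of not necessarily cocomplete categories, is an equivalence.

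By the third characterization in \cref{BarrBeck} it then suffices to verify two things. First, $L \colon \C^+ \to \D^+$ is conservative: if $L(X) \simeq 0$ then, by $t$-exactness, each $L(H^i(X)) \simeq H^i(L(X)) \simeq 0$, whence $H^i(X) \simeq 0$ by conservativity of $L$ on $\C^{\heartsuit}$, and an eventually coconnective object all of whose cohomology objects vanish is zero. Second, for every $C \in \C^+$ the canonical map $L(\text{Tot}\,(RL)^{\bullet+1}(C)) \to \text{Tot}\,L(RL)^{\bullet+1}(C)$ must be an equivalence. For this I would choose $n$ with $C \in \C^{\geq -n}$; since $L$ is $t$-exact and $R$ is left $t$-exact, the composite $RL$ preserves $\C^{\geq -n}$, so the whole cosimplicial object $(RL)^{\bullet+1}(C)$ lies in $\C^{\geq -n}$, and its totalization exists there because coconnective parts of a $t$-structure are closed under limits. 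Shifting by $[n]$ presents this as a cosimplicial object of $\C^{\geq 0}$, and \cref{ComonadicityMiracleFact} tells us that $L \colon \C^{\geq 0} \to \D^{\geq 0}$ commutes with all totalizations; undoing the shift gives the required equivalence, and \cref{BarrBeck} then yields comonadicity.

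Since the substantive input --- that $L$ commutes with the relevant infinite totalizations, which is reduced in the proof of \cref{ComonadicityMiracleFact} via \cref{TruncationIdentity} to right-completeness of the $t$-structures involved --- is already in hand, I do not expect a serious obstacle. The only point that requires any thought is the truncation bookkeeping that lets one replace the ``$\C^+$'' statement by the fixed-truncation statement of \cref{ComonadicityMiracleFact} on $\C^{\geq 0}$ (keeping the cosimplicial bar construction inside a single $\C^{\geq -n}$), together with the routine check that the adjunction $L \dashv R$ descends to eventually coconnective subcategories.
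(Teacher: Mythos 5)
Your argument is correct and follows the paper's own proof essentially verbatim: reduce to $C \in \C^{\geq 0}$ by $t$-exactness, observe that the bar resolution $(RL)^{\bullet+1}(C)$ stays coconnective because $R$ is left $t$-exact, and then invoke \cref{ComonadicityMiracleFact} together with the conservativity criterion of \cref{BarrBeck}. The only point worth noting is that both your conservativity check on $\C^+$ (an eventually coconnective object with vanishing cohomologies is zero) and the appeal to \cref{ComonadicityMiracleFact} implicitly use right-completeness of the $t$-structure on $\C$ as well --- as does the paper's own proof, which speaks of ``the right-completeness assumption on $C$'' --- even though the corollary's hypotheses as literally stated only assert right-completeness for $\D$.
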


\begin{proof}
By induction on cohomological amplitude, the $t$-exactness of $L$ implies that $L$ sends no nonzero object of finite cohomological amplitude to zero. By right-completeness of the $t$-structure on $\C$ and the fact that $L$ commutes with colimits, we see that $L$ sends no nonzero object of $C^{+}$ to zero, and therefore both restrictions $\C^{\geq 0} \to \D^{\geq 0}$ and $\C^+ \to \D^+$ of $L$ are conservative.

Next, notice that if $C^{\bullet}$ is an $L$-split cosimplicial object of $\C^{\geq 0}$ then by \cref{TruncationIdentity} its totalization exists in $\C$ and by \cref{ComonadicityMiracleFact} $L$ commutes with these totalizations. Thus by \cref{BarrBeck} we see that $\C^{\geq 0} \to \D^{\geq 0}$ is comonadic, and an identical argument shows $\C^{\geq -i} \to \D^{\geq -i}$ is comonadic for any integer $i$. This, in turn, implies that the functor $\C^+ \to LR\text{-comod}(\D^+)$ is fully faithful and essentially surjective, and so it is an equivalence of categories as desired.
\end{proof}

\newcommand{\quotientMapFromLTdToQuotientByCharacterLattice}{q}
\section{Proofs of \cref{Mellin Transform for biWhittaker Sheaves} and \cref{N Averaging Fully Faithful on Whittaker Subcategory}}\label{ProofOfFirstTheorem} 
In this section, we prove \cref{Mellin Transform for biWhittaker Sheaves} and \cref{N Averaging Fully Faithful on Whittaker Subcategory}. 

\subsection{Reminder on Descent to the Coarse Quotient}
In this section, we survey the results we will need, primarily taken from \cite{GannonDescentToTheCoarseQuotientForPseudoreflectionAndAffineWeylGroups}, on the \textit{coarse quotient} $\LTd\sslash\Wext$, a prestack defined precisely in \cite[Section 4.2.1]{GannonDescentToTheCoarseQuotientForPseudoreflectionAndAffineWeylGroups}. By construction, this space admits a quotient map $\quotientmapforcoarsequotient: \LTd \to \LTd\sslash \Wext$, and pullback by this map induces a fully faithful functor \[\quotientmapforcoarsequotient^!: \IndCoh(\LTd\sslash \Wext) \xrightarrow{} \IndCoh(\LTd)^{\Wext}\] by \cite[Theorem 4.18]{GannonDescentToTheCoarseQuotientForPseudoreflectionAndAffineWeylGroups}. If an object of $\IndCoh(\LTd)^{\Wext}$ lies in the essential image of this functor, we say it \textit{descends to the coarse quotient}. In \cite[Theorem 4.23]{GannonDescentToTheCoarseQuotientForPseudoreflectionAndAffineWeylGroups}, we give some conditions on a given sheaf descending to the coarse quotient; we record the ones we will use from \emph{loc.\ cit.} as well give a new condition involving the Mellin transform here:

\begin{Proposition}\label{New Various Conditions for Wext Equivariant Sheaf to Satisfy Coxteter Descent}
A sheaf $\mathcal{F} \in \IndCoh(\LTd)^{\Wext}$ descends to the coarse quotient $\LTd\sslash \Wext$ if and only if it satisfies one of the following equivalent conditions:
\begin{enumerate}
    \item For every field-valued point $x \in \LTd(K)$, the canonical $\Waff_x$-representation on $\overline{x}^!(\text{oblv}^{\Wext}_{\Waff_x}(\F))$ is trivial. 
    
        \item The object $\text{oblv}^{\Wext}_{\langle r \rangle}(\F) \in \IndCoh(\LTd)^{\langle r \rangle}$ descends to the coarse quotient $\LTd\sslash \langle r \rangle$ for every simple reflection $r \in W$.
        
         \item For every simple coroot $\gamma$ with associated simple reflection $r$ of the (finite) Weyl group $W$ and associated closed subgroup scheme $\mathbb{G}_m^{\gamma} \xhookrightarrow{} T$, the functor 
    \raggedbottom
    \[\D(T)^W \xrightarrow{\text{oblv}} \D(T)^{\langle r \rangle} \xrightarrow{\text{Av}_*^{\mathbb{G}_m^{\gamma}}} \D(T/\mathbb{G}_m^{\gamma})^{\langle r \rangle} \simeq \D(T/\mathbb{G}_m^{\gamma}) \otimes \text{Rep}(\langle r \rangle)\]
    
    \noindent maps $\FourierMukai(\F)$ into the subcategory $\D(T/\mathbb{G}_m^{\gamma}) \simeq \D(T/\mathbb{G}_m^{\gamma}) \otimes \text{Vect}_{\text{triv}} \xhookrightarrow{} \D(T/\mathbb{G}_m^{\gamma}) \otimes \text{Rep}(\langle r\rangle)$, where $\mathbb{G}_m^{\gamma}$ is the rank 1 subgroup scheme of $T$ associated to $\gamma$, and $\text{Vect}_{\text{triv}}$ is the full subcategory generated by the trivial representation of the order two group $\langle r \rangle$.
    \end{enumerate}
\end{Proposition}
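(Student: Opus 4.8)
\textbf{Proof strategy for \cref{New Various Conditions for Wext Equivariant Sheaf to Satisfy Coxteter Descent}.} The plan is to prove the cited theorem by reducing the general statement about $\Wext$-descent to the local, rank-one statement about $\langle s \rangle$-descent, and then to translate that rank-one statement between its $\IndCoh$ form, its pointwise-representation form, and its averaging form. Since the equivalences themselves (conditions (1), (2), (3) are mutually equivalent and characterize descent) are established in \cite[Section 4]{GannonDescentToTheCoarseQuotientForPseudoreflectionAndAffineWeylGroups}, the work here is to recast those conditions in the precise language used later in the paper (in particular, to pass from quasicoherent/ind-coherent sheaves on $\LTd$ with its $\Wext$-action to $\D$-modules on $T$ via the Mellin transform, and to phrase everything in terms of the averaging functors $\AvN[\cdot]$ and forgetful functors that appear elsewhere). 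So the first step is to recall from \cite{GannonDescentToTheCoarseQuotientForPseudoreflectionAndAffineWeylGroups} that $s^!\colon \IndCoh(\LTd \sslash \Wext) \to \IndCoh(\LTd/\Wext)$ is fully faithful with essential image characterized by pointwise triviality of the stabilizer representations, and to observe that since every stabilizer $\Waff_x$ of a $K$-point is generated by the affine reflections it contains (a standard fact about affine Weyl groups acting on $\LTd$), condition (1) can be checked one reflection at a time — this is exactly the passage from (1) to (2).

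\textbf{Key steps in order.} First I would state the main input: \cite[Section 4]{GannonDescentToTheCoarseQuotientForPseudoreflectionAndAffineWeylGroups} shows descent to $\LTd\sslash\Wext$ is equivalent to the collection of descents to $\LTd\sslash\langle s\rangle$ over all simple reflections $s$, and descent to $\LTd\sslash\langle s\rangle$ is in turn equivalent to pointwise triviality of the $\Waff_x$-action on the stalks. Second, I would verify that reducing to simple reflections (rather than all affine reflections) is legitimate: the $\Wext$-equivariant structure propagates the condition at one reflection to all $W$-conjugate reflections, and translation by $\characterlatticeforT$ handles the affine ones, so the simple reflections of the finite $W$ suffice — this gives (1)$\Leftrightarrow$(2). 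Third, for the equivalence of (2) (or (1)) with (3), I would unwind the rank-one geometry: for a simple coroot $\gamma$ with reflection $s$, the hyperplane fixed by $s$ in $\LTd$ is (Mellin-dually) the image of $\mathbb{G}_m^\gamma \hookrightarrow T$, so pushing forward along $T \to T/\mathbb{G}_m^\gamma$ realizes restriction to that fixed locus; the stabilizer representation on the stalk along the fixed hyperplane is trivial precisely when $\text{Av}_*^{\mathbb{G}_m^\gamma}$ lands in the trivial-isotypic summand $\D(T/\mathbb{G}_m^\gamma)\otimes \text{Vect}_{\text{triv}}$ of $\D(T/\mathbb{G}_m^\gamma)^{\langle s\rangle} \simeq \D(T/\mathbb{G}_m^\gamma)\otimes\text{Rep}(\langle s\rangle)$. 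The identification $\D(T/\mathbb{G}_m^\gamma)^{\langle s\rangle} \simeq \D(T/\mathbb{G}_m^\gamma)\otimes\text{Rep}(\langle s\rangle)$ itself uses that $\langle s\rangle$ acts trivially on the quotient $T/\mathbb{G}_m^\gamma$, which is why the equivariant category splits as a tensor product with $\text{Rep}(\langle s\rangle)$.

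\textbf{Main obstacle.} The routine parts are the sheaf-theoretic bookkeeping; the delicate step is matching up the three pictures across the Mellin transform coherently — specifically, checking that "the stalk representation along the fixed hyperplane" (condition (1)'s formulation on $\IndCoh(\LTd)$) corresponds under Mellin to "the $\mathbb{G}_m^\gamma$-averaging lands in the trivial summand" (condition (3)'s formulation on $\D(T)$). This requires knowing that $\text{Av}_*^{\mathbb{G}_m^\gamma}$ Mellin-transforms to $!$-restriction to the fixed hyperplane $\hyperplanefixedbys \subseteq \LTd$ (up to shift), which one gets from the behavior of the Mellin transform under the closed embedding of a subtorus, and then that the $\langle s \rangle$-action on that restriction is read off as a $\text{Rep}(\langle s\rangle)$-grading. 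I would handle this by citing the compatibility of the Mellin transform with pushforward along subtori and with the finite-group equivariance, and then the desired equivalence is formal. Everything else — the reduction to simple reflections and the reduction to rank one — is either in \cite{GannonDescentToTheCoarseQuotientForPseudoreflectionAndAffineWeylGroups} or is a short argument using the $\Wext$-equivariant structure.
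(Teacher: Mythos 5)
Your strategy is essentially the paper's: the equivalence of descent with (1) and (2) is simply cited from \cite{GannonDescentToTheCoarseQuotientForPseudoreflectionAndAffineWeylGroups} (so your step re-deriving the reduction to simple reflections is unnecessary), and the only content of the proof is (2)$\Leftrightarrow$(3), which both you and the paper obtain by Mellin-transforming the rank-one averaging functor and matching the trivial-isotypic condition against pointwise triviality of the stalk representations. One correction to the step you yourself flag as delicate: the Mellin dual of $\text{Av}_*^{\mathbb{G}_m^{\gamma}}$ (pushforward along $T \to T/\mathbb{G}_m^{\gamma}$) is $!$-restriction to the locus $\LTd_{\gamma \in \mathbb{Z}}$ of \emph{all} integer level sets of $\gamma$, quotiented by the complementary lattice $X'$ of fundamental weights, not to the single fixed hyperplane $\{\gamma = 0\}$; the paper also first applies $\text{Av}_*^{T/\mathbb{G}_m^{\gamma}, w}$, using conservativity of weak averaging, so that the Mellin-dual description lands in $\IndCoh$ of a scheme and the comparison with condition (2) can be read off directly.
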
 

\begin{proof}
The equivalence of an $\F$ descending to the coarse quotient, (1), and (2) are given in \cite{GannonDescentToTheCoarseQuotientForPseudoreflectionAndAffineWeylGroups}. We now show that the full subcategories given by (2) and (3) of \cref{New Various Conditions for Wext Equivariant Sheaf to Satisfy Coxteter Descent} are equivalent. To this end, fix a simple reflection $s$, and note that if $F$ denotes the composite functor 
\raggedbottom
\[\D(T)^{\langle r \rangle} \xrightarrow{\text{Av}_*^{\mathbb{G}_m^{\gamma}}} \D(T/\mathbb{G}_m^{\gamma})^{\langle r \rangle} \simeq \D(T/\mathbb{G}_m^{\gamma}) \otimes \text{Rep}(\langle r \rangle)\]

\noindent then for any $\F \in \D(T)^W, F(\F)$ lies in the subcategory $\D(T/\mathbb{G}_m^{\gamma}) \simeq \D(T/\mathbb{G}_m^{\gamma}) \otimes \text{Vect}_{\text{triv}}$ if and only if the object $\text{Av}_*^{T/\mathbb{G}_m^{\gamma}, w}F(\F)$ lies in the full subcategory $\D(T/\mathbb{G}_m^{\gamma})^{T/\mathbb{G}_m^{\gamma}, w} \simeq \D(T/\mathbb{G}_m^{\gamma})^{T/\mathbb{G}_m^{\gamma}, w} \otimes \text{Vect}_{\text{triv}}$, by the conservativity of weak averaging, see \cite{Gai1Aff}. Furthermore, if we assume $r$ has associated coroot $\gamma$ such that $s$ reflects across the hyperplane $\LTd_{\gamma = 0} := \{\gamma = 0\} \xhookrightarrow{} \LTd$, the Mellin transform allows us to identify the functor $\text{Av}_*^{T/\mathbb{G}_m^{\gamma}, w}F$ with the composite:
\raggedbottom
\[\IndCoh(\LTd/\characterlatticeforT)^{\langle r \rangle} \to \IndCoh(\LTd_{\gamma \in \mathbb{Z}}/X')^{\langle r \rangle}\] \[\simeq \IndCoh(\LTd_{\gamma \in \mathbb{Z}}/X') \otimes \text{Rep}(\langle r \rangle) \xrightarrow{\text{oblv}^{X'} \otimes \text{ id}} \IndCoh(\LTd_{\gamma \in \mathbb{Z}}) \otimes \text{Rep}(\langle r \rangle)\]

\noindent where $X'$ is the lattice of weights generated by the fundamental weights distinct from the fundamental weight associated to $\gamma$. Therefore, any sheaf satisfying $(2)$ immediately satisfies $(3)$, and any sheaf satisfying $(3)$ also satisfies $(2)$.
\end{proof}

As in \cite[Section 4.2.4]{GannonDescentToTheCoarseQuotientForPseudoreflectionAndAffineWeylGroups}, we can define a $t$-structure on $\IndCoh(\LTd\sslash\Wext)$ by declaring $\IndCoh(\LTd\sslash\Wext)^{\leq 0}$ to be the full ordinary $\infty$-subcategory closed under colimits and containing $\quotientmapforcoarsequotient_*^{\IndCoh}(\mathcal{O}_{\LTd})$. We also can similarly define a $t$-structure on $\IndCoh(\LTd/\Wext)$ (respectively, $\IndCoh(\LTd/\characterlatticeforT)$) by declaring $\IndCoh(\LTd/\Wext)^{\leq 0}$ to be the full ordinary $\infty$-subcategory closed under colimits and containing the respective IndCoh pushforward given by the quotient map of the structure sheaf $\mathcal{O}_{\LTd}$. Let $\Gamma_{\Wext}$ denote the balanced product $\Wext \mathop{\times}\limits^{\Waff} \Gamma_{\Waff}$, where $\Gamma_{\Waff}$ denotes the union of graphs of the affine Weyl group. We recall some results of \cite{GannonDescentToTheCoarseQuotientForPseudoreflectionAndAffineWeylGroups} which will be used below. 

\begin{Theorem}\label{Base Change and t-Exactness for Coarse Quotient Functors}With the above notation, we have \begin{enumerate}
    \item The commutative diagram
\begin{equation*}
  \xymatrix@R+2em@C+2em{
   \Gamma_{\Wext} \ar[r]^{s} \ar[d]^{t}& \LTd \ar[d]^{\quotientmapforcoarsequotient} \\
  \LTd \ar[r]^{\quotientmapforcoarsequotient} & \LTd\sslash \Wext
  }
 \end{equation*}
 
 \noindent is Cartesian, where $s$, respectively $t$, denote the source and target maps, and $\quotientmapforcoarsequotient$ denotes the quotient map. 
 \item The maps $s$, $t$, and $\quotientmapforcoarsequotient$ are ind-finite flat and the functors $s^!, t^!, \quotientmapforcoarsequotient^!$ admit left adjoints, denoted $s_{\ast}^{\IndCoh}, t_*^{\IndCoh}$, and $\quotientmapforcoarsequotient_*^{\IndCoh}$ respectively. 
 \item The functors $s_{\ast}^{\IndCoh}, s^{!}, t_{\ast}^{\IndCoh}, t^{!}, \quotientmapforcoarsequotient_{\ast}^{\IndCoh}, \quotientmapforcoarsequotient^{!}$ are all $t$-exact. 
 \item There is a canonical base change isomorphism $t_*s^! \xrightarrow{\sim} \quotientmapforcoarsequotient^!\quotientmapforcoarsequotient_*^{\IndCoh}$.
 \end{enumerate}
\end{Theorem}

\begin{Proposition}
Analogous results to \cref{Base Change and t-Exactness for Coarse Quotient Functors} also hold if we replace $\LTd\sslash \Waff$ with $\LTd/\Waff$ or $\LTd/\characterlatticeforT$. For example, 
\begin{equation*}
  \xymatrix@R+2em@C+2em{
   \Wext \times \LTd \ar[r]^{\text{act}} \ar[d]^{\text{proj}}& \LTd \ar[d]^{\quotientmaptostackquotient} \\
  \LTd \ar[r]^{\quotientmaptostackquotient} & \LTd/\Wext
  }
 \end{equation*}
\noindent is Cartesian and base change holds. Moreover, the pullback and pushforward functors given by this diagram are $t$-exact. 
\end{Proposition}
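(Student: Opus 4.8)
The statement to be proven asserts that the square
\begin{equation*}
  \xymatrix@R+2em@C+2em{
   \Wext \ar[r]^{\text{act}} \ar[d]^{\text{proj}}& \LTd \ar[d]^{\quotientmaptostackquotient} \\
  \LTd \ar[r]^{\quotientmaptostackquotient} & \LTd/\Wext
  }
 \end{equation*}
is Cartesian, that base change holds for it, and that the pullback and pushforward functors are $t$-exact, together with the analogous assertions for $\LTd/\Waff$ and $\LTd/\characterlatticeforT$. The plan is to first establish the Cartesian property, which is essentially formal: for any group(-ind-scheme) $\mathscr{G}$ acting on a prestack $Y$, the fiber product $Y \times_{Y/\mathscr{G}} Y$ is by definition $\mathscr{G} \times Y$, with the two projections to $Y$ being $\text{proj}$ and $\text{act}$; here $\mathscr{G} = \Wext$ (a discrete group, hence a disjoint union of copies of $\Spec k$), $Y = \LTd$, so $\Wext \times \LTd \simeq \Wext$ as the "union of graphs" picture makes explicit. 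I would cite the fact (already in the excerpt via \cref{Orbits of Field-Valued Points Make Sense if Field-Valued Points Are Sets}) that $\Wext$ has discrete set of $K$-points, so all maps in sight are ind-finite, in fact the left and top maps are ind-finite and the relevant morphisms are (ind-)affine.

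Next I would address base change. Since $\Wext$ is a disjoint union of points, the map $\text{proj}: \Wext \simeq \bigsqcup_{w} \LTd \to \LTd$ is ind-proper (indeed each component is an isomorphism, and $\Wext$ is a colimit over the filtered poset of finite subsets of $\Wext$), so $\text{proj}_*^{\IndCoh}$ is the left adjoint of $\text{proj}^!$ by the ind-proper base change formalism of \cite[Chapter 3]{GaRoII}, exactly as invoked in the proof of \cref{Inv=Coinv for Group Ind-Schemes Which Are Discrete Sets of Points}. Base change for the square then follows from the base change theorems for $\IndCoh$ on ind-inf-schemes (ind-proper base change against arbitrary $!$-pullback, \cite[Chapter 3]{GaRoII}), using that $\quotientmaptostackquotient: \LTd \to \LTd/\Wext$ is itself a map to which these apply after descent along the smooth (in fact \'etale, as $\Wext$ is discrete) atlas $\LTd \to \LTd/\Wext$. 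The $\LTd/\Waff$ case is identical with $\Waff$ in place of $\Wext$, and the $\LTd/\characterlatticeforT$ case is identical with the lattice $\characterlatticeforT$ in place of $\Wext$ (here one may alternatively observe that $\IndCoh(\LTd/\characterlatticeforT) \simeq \D(T)$ under the Mellin transform and $\characterlatticeforT$ is again a discrete group).

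For $t$-exactness: the map $\quotientmaptostackquotient: \LTd \to \LTd/\Wext$ is an \'etale cover (quotient by the free-in-the-derived-sense action of the discrete group $\Wext$ on the scheme $\LTd$), so both $\quotientmaptostackquotient^!$ and $\quotientmaptostackquotient_*^{\IndCoh}$ are $t$-exact for the $t$-structures defined on $\IndCoh(\LTd/\Wext)$ and $\IndCoh(\LTd)$ as in the paragraph preceding the statement — indeed the $t$-structure on $\IndCoh(\LTd/\Wext)$ is by construction the one making $\quotientmaptostackquotient_*^{\IndCoh}(\O_{\LTd})$ (and its shifts/colimits) connective, and the standard $t$-structure on $\IndCoh$ of a smooth scheme is preserved by \'etale pushforward and pullback. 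Since $\text{proj}: \Wext \to \LTd$ is an isomorphism on each component, $\text{proj}^!$ and $\text{proj}_*^{\IndCoh}$ are visibly $t$-exact, and by base change $\text{act}^! \simeq \text{proj}_*^{\IndCoh} \circ \quotientmaptostackquotient^! \circ (\cdots)$ — more precisely one reads off the $t$-exactness of $\text{act}^!$ and $\text{act}_*^{\IndCoh}$ from the Cartesian square plus the already-established $t$-exactness of the vertical and one horizontal leg, exactly as in the proof of \cref{Base Change and t-Exactness for Coarse Quotient Functors} in \cite{GannonDescentToTheCoarseQuotientForPseudoreflectionAndAffineWeylGroups}. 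The main obstacle, such as it is, is purely bookkeeping: one must be careful that the $t$-structures referenced are the ad hoc ones defined by generation under colimits (not a priori the "standard" $\IndCoh$ $t$-structure on the quotient stack), and check they agree — but since $\LTd$ is smooth and $\Wext$ discrete this agreement is immediate, so there is no real difficulty; the proof is a direct transcription of the $\LTd\sslash\Wext$ argument with the coarse quotient replaced by the stacky quotient.
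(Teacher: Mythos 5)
The paper does not prove this Proposition: like \cref{Base Change and t-Exactness for Coarse Quotient Functors} immediately preceding it, it is recalled from the companion paper \cite{GannonDescentToTheCoarseQuotientForPseudoreflectionAndAffineWeylGroups}, so there is no in-text argument to compare yours against. Your sketch is the expected one and is essentially correct: the square is Cartesian because $Y \times_{Y/\mathscr{G}} Y \simeq \mathscr{G} \times Y$ tautologically for any group acting on a prestack; base change follows from ind-proper base change for $\IndCoh$ since $\Wext \times \LTd \to \LTd$ is a filtered colimit of finite disjoint unions of isomorphisms; and $t$-exactness of the horizontal legs is then read off component by component.

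Two points deserve cleaning up. First, the parenthetical ``quotient by the free-in-the-derived-sense action'' is wrong as stated: the $\Wext$-action on $\LTd$ is far from free (reflections fix hyperplanes, and the stabilizers $\Waff_x$ appearing in \cref{New Various Conditions for Wext Equivariant Sheaf to Satisfy Coxteter Descent} are frequently nontrivial). The quotient map $\quotientmaptostackquotient: \LTd \to \LTd/\Wext$ is nevertheless an \'etale atlas, but the reason is descent: its pullback along itself is $\mathrm{proj}: \Wext \times \LTd \to \LTd$, a disjoint union of isomorphisms, hence \'etale, and \'etaleness is local on the target in the smooth topology. Second, since $\Wext$ is infinite, $\quotientmaptostackquotient^!\quotientmaptostackquotient_*^{\IndCoh}(-) \simeq \bigoplus_{w \in \Wext} w^!(-)$ is an infinite direct sum, so the $t$-exactness of $\quotientmaptostackquotient_*^{\IndCoh}$ (and the agreement of the generation-defined $t$-structure on $\IndCoh(\LTd/\Wext)$ with the descended one) uses that the $t$-structures in play are compatible with filtered colimits; this is worth stating explicitly, because it is precisely the place where the stacky quotient $\LTd/\Wext$ is easy while the coarse quotient $\LTd\sslash\Wext$ of \cref{Base Change and t-Exactness for Coarse Quotient Functors} requires genuine work.
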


\subsection{Reminder on Nondegenerate $G$-categories} Recall that in \cite[Section 3.2.1]{GannonNew} we defined a certain subcategory $\D(N\backslash G)_{\mathrm{nondeg}}$ of $\D(N\backslash G)$ closed under the left $T$ and right $G$-action and which admits adjoint functors \[J^!:\D(N\backslash G) \leftrightarrow{} \D(N\backslash G)_{\mathrm{nondeg}}: J_*\] for which $J_*$ is fully faithful. For the convenience of the reader, we now record the results of \cite[Section 3]{GannonNew} on these categories which will be used below.

\begin{Theorem}\label{Summary of Companion Paper}With the above notation, we have the following: \begin{enumerate}
    \item The restriction of the $N$-averaging functor \[\AvN: \D(G) \to \D(G)^N \simeq \D(N\backslash G)\] (the right adjoint to the forgetful functor) to the subcategory $\D(G)^{N^-, \psi}$ naturally factors through $\D(N\backslash G)_{\mathrm{nondeg}}$.
    \item The action of $T \times G$ on $\D(N\backslash G)_{\mathrm{nondeg}}$ upgrades to an action of $(T \rtimes W) \times G$, and, for any root $\alpha$, the induced action of the order two group $\langle r_{\alpha} \rangle$ on $\D(N\backslash G)_{\mathrm{nondeg}}^{\mathbb{G}_m^{\alpha}}$ is trivial,  where $\mathbb{G}_m^{\alpha}$ is the image of $\alpha^{\vee}$ . 
    \item The canonical map $\D(G/_{\psi}N)^N \xrightarrow{} \D(G/_{\psi}N^-)^N_{\mathrm{nondeg}}$ is an equivalence, and one can upgrade the equivalence \cref{SupportOfWhittakerSheaves} to an equivalence of categories with an action of $T \rtimes W$.
    \item There is a right-complete $t$-structure on $\D(N\backslash G)_{\mathrm{nondeg}}$ for which $J^!$ is $t$-exact. Similarly, one can equip $\D(N\backslash G/N)_{\mathrm{nondeg}}$ and $\D(N\backslash G/N)^{T \times T, w}_{\mathrm{nondeg}}$ with right-complete $t$-structures.
    \item The category $\D(N\backslash G/N)_{\mathrm{nondeg}}$, respectively $\D(N\backslash G/N)_{\mathrm{nondeg}}^{T \times T, w}$, is generated by the objects in the $W$-orbit, respectively $\Wext$-orbit, of the monoidal unit.
\end{enumerate}
\end{Theorem}

\begin{proof}
    The first statement is \cite[Proposition 3.14]{GannonNew}, and the second is given by formally combining \cite[Corollary 3.37, Corollary 3.45]{GannonNew}. Moreover, the third is given by \cite[Proposition 3.23]{GannonNew}, the fourth is given by \cite[Proposition 3.18]{GannonNew}, and the final statement is a combination of \cite[Corollary 3.41]{GannonNew} and \cite[Corollary 3.42]{GannonNew}.
\end{proof}

We also recall a key tool used in the proof of \cref{Summary of Companion Paper}: namely, the existence of a partial left adjoint to $\AvN$, which follows directly from combining \cite[Theorem 2.43]{GannonNew}, \cite[Corollary 2.44]{GannonNew} and \cite[Theorem 1.5.4]{Gin}:

\begin{Theorem}\label{T Exactness of Shifted Psi Averaging}
If $\C$ is a DG category with a $G$-action, then the functor $\AvN: \C^{N^{-}, \psi} \to \C^N$ admits a left adjoint $\Avpsi[2\dim(N)]$. If, moreover, $\C$ admits a $t$-structure compatible with the $G$-action, then the cohomological shift of these functors $\Avpsishifted: \C^N\to \C^{N^-, \psi}$ and $\AvNshifted: \C^{N^-, \psi} \to \C^{N}$ are $t$-exact. \hfill \qedsymbol
\end{Theorem}

For any category $\C$ with a $G$-action, we use the notatioal shorthand $\mathsf{A}_* := \AvNshifted$ and $\mathsf{A}^{\psi}_! := \Avpsishifted$.

Finally, we recall following result of the companion paper \cite{GannonNew}, which follows formally from combining \cite[Theorem 1.6]{GannonNew} and \cite[Corollary 2.19]{GannonNew}:

\begin{Theorem}\label{Field Valued Point Theorem}
If $F: \C \to \D$ is a functor of $G$-categories which admits a (continuous) adjoint, then this functor is fully faithful (respectively, an equivalence) if and only if the induced functor $F: \C^{B, \mathcal{L}_{[\lambda]}} \to \D^{B, \mathcal{L}_{[\lambda]}}$ is fully faithful (respectively, an equivalence) for all field-valued points $\lambda$ of $\LTd$. Moreover, an object $\F \in \C$ is zero if and only if $\mathrm{Av}^{B_{\lambda}}_*(\F)$ is zero for all field-valued points $\lambda$. \hfill \qedsymbol
\end{Theorem}


\newcommand{\AvNLifted}{\widetilde{\mathrm{Av}}_*^N}

\subsection{Proof of \cref{N Averaging Fully Faithful on Whittaker Subcategory}}
In this subsection, we prove \cref{N Averaging Fully Faithful on Whittaker Subcategory}. Let $\C$ be a $G$-category. Using the fact that invariants and coinvariants agree (see \cite[Corollary 3.1.5]{GaiWhit}) and that tensor products commute with colimits, we may identify $\AvN$ with the functor
\raggedbottom
\[\C^{N^{-}, \psi} \simeq \C_{N^{-}, \psi} \simeq \D(G)^{N^{-}, \psi} \otimes_G \C \xrightarrow{\AvN \otimes_G \text{id}_{\C}} \D(G)^{N}_{\text{nondeg}} \otimes_G \C\] \[\ \simeq \D(G)_{N, \text{nondeg}} \otimes_G \C \simeq \C_{N, \text{nondeg}} \simeq \C^{N}_{\text{nondeg}}\]

\noindent and similarly for the adjoint $\Avpsi$.  We therefore obtain that the construction of the lift of $\AvN$ and its fully faithfulness follows by proving the general universal case: 

\begin{Theorem}\label{UniversalAvNIsFullyFaithful}
The functor of $G$-categories $\AvN: \D(G)^{N^{-}, \psi} \xrightarrow{} \D(G)^N$ lifts to a fully faithful functor of $G$-categories $\AvNLifted: \D(G)^{N^{-}, \psi} \xhookrightarrow{} \D(G)^{N, W}_{\text{nondeg}}$. Moreover, the functor $\AvNshifted$ lifts to a fully faithful $t$-exact functor of $G$-categories $\AvNShiftedAndLifted: \D(G)^{N^-, \psi} \xhookrightarrow{} \D(G)^{N, W}_{\text{nondeg}}$.
\end{Theorem}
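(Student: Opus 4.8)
\textbf{Proof proposal for \cref{UniversalAvNIsFullyFaithful}.}

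The plan is to reduce fully faithfulness to a statement that can be checked at a fixed field-valued central character, where it becomes a concrete computation inside the (nondegenerate) BGG category $\mathcal{O}_{\lambda}$. First I would observe that the lift of $\AvN$ through the nondegenerate quotient is already provided: by \cref{Easier Classification of Kernel Properties} the functor $\AvN: \D(N^-_{\psi}\backslash G) \to \D(N\backslash G)$ factors canonically through $\D(N\backslash G)_{\text{nondeg}}$, and since the $W$-action on $\D(N\backslash G)_{\text{nondeg}}$ constructed in \cref{Nondegeneracy is Closed Under Actions} restricts on the full subcategory $\D(G/N)^{N^-,\psi}$ to the Gelfand--Graev action matched with the $(W,\cdot)$-action on $\D(T)$ via \cref{WActionOnWhittaker}, the functor $\AvN$ naturally enhances to a functor landing in $\D(G)^{N,W}_{\text{nondeg}}$. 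The $t$-exactness of $\AvNShiftedAndLifted$ follows from \cref{Ginzburgt-Exactness} combined with the $t$-exactness of the quotient functor $J^!$ recorded in \cref{Easier Classification of Kernel Properties} and the fact that the forgetful functor $(-)^{W}$ reflects the $t$-structure (\cref{t-Exact Implies Invariants Are}), so the only real content is fully faithfulness.

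For fully faithfulness, I would use the fact that $\AvN$ admits both a left adjoint $\Avpsi$ (by \cref{BBMAdjointTheorem}) and is a functor of $G$-categories; after applying $(-)^{T,w}$ and the Mellin transform, $\AvNShiftedAndLifted$ becomes a functor of $\IndCoh(\LTd)$-module categories with a continuous left adjoint, so by \cref{Can Check Equivalence on Each field-valued Point}(1) fully faithfulness may be checked after base change along each field-valued point $\lambda \in \LTd(L)$. By \cref{D-modules Base Change wrt Extension of Scalars} and the base-change corollary following it we may assume $\lambda \in \LTd(k)$. At such a $\lambda$, the source $\D(G)^{N^-,\psi}$ has $(B,\mathcal{L}_{[\lambda]})$-invariants identified with $\text{Vect}$ (using \cref{SupportOfWhittakerSheaves} and the block decomposition), and the target becomes $\D(G/_{\lambda}B)^{N}_{\text{nondeg}}$ (or rather the $W$-equivariant version); the functor in question is, up to shift, the composite $\text{Vect} \xrightarrow{\AvNshifted} \D(G/_{\lambda}B)^N \xrightarrow{J^!} \D(G/_{\lambda}B)^N_{\text{nondeg}}$, whose left adjoint was computed to be (an enhancement of) Soergel's $\mathbb{V}$ in \cref{Avpsi is Translation}.

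So the crux is: the unit map $\mathrm{id} \to \Avpsi \circ \AvN$ is an equivalence on $\text{Vect}$ at each $\lambda$. By \cref{Avpsi is Translation}(1), $\AvNshifted(k) \simeq \underline{I}_{\lambda}$, the direct sum of the indecomposable injective hulls of the antidominant simples; passing to the nondegenerate quotient via $J^!$ and using \cref{Enhanced V is Monadic on Eventually Coconnective Derived Ind Category O}, one computes $\Avpsishifted(J^!\AvNshifted(k)) \simeq \uHom(\underline{P}_{\lambda}, \underline{I}_{\lambda})$, which by the Struktursatz (\cref{Soergel Summary}) together with \cref{All Local Rings Which Vanish to High Power Have Trivial Subobject Lemma} --- the observation that the coinvariant algebra $C_{\lambda}$ admits only the trivial module structure on $k$ --- collapses to $k$, matching the identity up to the prescribed shift. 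The $W$-equivariant refinement then follows from the triviality of the Gelfand--Graev action on the relevant $\mathbb{G}_m^{\alpha}$-equivariant pieces, i.e.\ \cref{Action of Order Two Simple Reflection Group on Associated G_m Invariants is Trivial} and \cref{SFT for Each Simple Reflection Can Be Identified with Identity}, which ensure the $W$-action does not obstruct the counit/unit identities. The main obstacle I anticipate is bookkeeping the passage between the three incarnations of the problem (the $G$-categorical statement, the $\IndCoh(\LTd)$-linear check at each $\lambda$, and the module-theoretic computation in $\mathcal{O}_{\lambda}$), and in particular verifying that the left adjoint appearing after base change genuinely is the Soergel-type functor of \cref{Avpsi is Translation} and that the identification of $\AvNshifted(k)$ with $\underline{I}_\lambda$ is compatible with the quotient functor $J^!$ --- everything else reduces to the already-established Soergel theory and the coinvariant-algebra lemma.
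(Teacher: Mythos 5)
Your overall architecture (construct the lift via the $W$-equivariant identification of the integral kernel, reduce fully faithfulness to field-valued points via \cref{Can Check Equivalence on Each field-valued Point} and \cref{D-modules Base Change wrt Extension of Scalars}, then compute the unit on the generator $k\in\text{Vect}$ using \cref{Avpsi is Translation}) matches the paper's proof up to the final computation. But that final computation is where your argument breaks. You assert that $\Avpsishifted(J^!\AvNshifted(k))\simeq \uHom(\underline{P}_{\lambda},\underline{I}_{\lambda})$ ``collapses to $k$'' by the Struktursatz together with \cref{All Local Rings Which Vanish to High Power Have Trivial Subobject Lemma}. This is false: since $\underline{P}_{\lambda}\cong\underline{I}_{\lambda}$ by self-duality of antidominant projectives, the Endomorphismensatz gives $\uHom(\underline{P}_{\lambda},\underline{I}_{\lambda})\cong W\mathop{\times}\limits^{W_{[\lambda]}}C_{\lambda}$, a $|W|$-dimensional algebra, and passing to the nondegenerate quotient does not shrink it because $\underline{P}_{\lambda}=\AvN(k)$ already lies in the nondegenerate subcategory (\cref{Easier Classification of Kernel Properties}), so $J^!$ changes nothing here. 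The lemma you invoke concerns the $C_{\lambda}$-module structure on the one-dimensional space $\tildeV(L_a)\simeq k$ attached to an antidominant \emph{simple}; it is used in \cref{Computation of Nondegenerate Category O at a Block} to compute $\uEnd(J^!(L_a))$ and has no bearing on $\uHom(\underline{P}_{\lambda},\underline{I}_{\lambda})$. In particular, the unit $\mathrm{id}\to\Avpsi\AvN$ is \emph{not} an equivalence on $\text{Vect}$ at the level of $\D(G/_{\lambda}B)^N_{\text{nondeg}}$; the unlifted functor into the non-$W$-equivariant nondegenerate category is genuinely not fully faithful.

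Relatedly, your closing remark that the $W$-equivariant refinement merely ``ensures the $W$-action does not obstruct the counit/unit identities'' inverts the logic: the $W$-invariance is the entire mechanism by which fully faithfulness holds. The paper's computation is
\[
\uEnd_{\D(G/_{\lambda}B)^{N,W}_{\text{nondeg}}}(\underline{P}_{\lambda})\;\simeq\;\bigl(W\mathop{\times}\limits^{W_{[\lambda]}}C_{\lambda}\bigr)^W\;\simeq\;k,
\]
where the last step uses that $W\mathop{\times}\limits^{W_{[\lambda]}}C_{\lambda}$, with the $W$-equivariant structure supplied by \cref{Sym(t)-Action Factors Through Finite Subscheme} and the Endomorphismensatz, is the regular $W$-representation, so its invariants are one-dimensional. (\cref{Action of Order Two Simple Reflection Group on Associated G_m Invariants is Trivial} plays no role in this step; it is used elsewhere, for the descent statements.) To repair your proof you must carry the $W$-equivariance through the endomorphism computation and take invariants at the end, rather than checking fully faithfulness first and equivariance second.
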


\begin{proof}[Proof of \cref{UniversalAvNIsFullyFaithful}]
To construct the lift of $\AvNshifted$, we will construct a lift $\AvNLifted$ of $\AvN$ and set $\AvNShiftedAndLifted := \AvNLifted[\mathrm{dim}(N)]$. As have already recalled that the functor $\AvNshifted$ is $t$-exact in \cref{T Exactness of Shifted Psi Averaging}, the $t$-exactness of $\AvNshifted$ follows since the forgetful functor $\text{oblv}^W$ reflects the $t$-structure. 

To construct $\AvNLifted$, note that the $G$-functor $\AvN$ is given by an integral kernel in $\D(N\backslash G)^{N^-, -\psi}_{\text{nondeg}}$, and, under the equivalences $\D(N\backslash G)^{N^-, -\psi}_{\text{nondeg}} \simeq \D(N\backslash G)^{N^-, -\psi} \simeq \D(T)$ given by \cref{Summary of Companion Paper}(1) and \cref{SupportOfWhittakerSheaves} respectively, this kernel is given by $\delta_1[-\text{dim}(N)] \in \D(T)$ by direct computation or the non-equivariant version of the argument in \cref{MonoidalityOfAvN} below. Furthermore, these equivalences are canonically $W$-equivaraint by \cite[Proposition 5.5.2]{Gin} (see also \cref{Summary of Companion Paper}(3)) and therefore the integral kernel can be canonically equipped with $W$-equivariant structure since $\ast \xhookrightarrow{} T$ is $W$-equivariant. 

We wish to show this lift is fully faithful. By \cref{Field Valued Point Theorem}, it suffices to show that the resulting functor on invariants $\text{Vect} \simeq \D(G/_{\lambda}B)^{N^-, \psi} \to \D(G/_{\lambda}B)^{N, W}_{\text{nondeg}}$ is fully faithful for any field-valued point $\lambda$ of $\LTd$. Since averaging functors respect extension of scalars (which follows immediately from \cite[Lemma 2.49]{GannonNew}) it suffices to assume that $\lambda$ is a $k$-point. By continuity of $\AvN$, we may show the counit of the adjunction is an isomorphism on the one dimensional vector space $k \in \text{Vect}$. By \cite[Proposition 4.6]{GannonNew}(1), this object is a cohomological shift of the direct sum of the indecomposable antidominant injectives $\underline{I}$, or, equivalently by self duality, the direct sum of the antidominant projectives $\underline{P}$. Soergel's endomorphismensatz \cite{Soe1} gives $W \mathop{\times}\limits^{W_{[\lambda]}} C_{\lambda} \xrightarrow{\sim} \uEnd_{\D(G/B_{\lambda})^N_{\text{nondeg}}}(\underline{P})$. Therefore, since $T \rtimes W$ acts on $\D(G/_{\lambda}B)^N_{\text{nondeg}}$ we have that by \cite[Proposition 4.5]{GannonNew} that this equivalence of classical algebras is $W$-equivariant. Therefore we see
\raggedbottom
\[\uEnd_{\D(G/B_{\lambda})^{N,W}_{\text{nondeg}}}(\underline{P}_{\lambda}) \simeq \uEnd_{\D(G/B_{\lambda})^N_{\text{nondeg}}}(\underline{P}_{\lambda})^W \simeq (W \mathop{\times}\limits^{W_{[\lambda]}} C_{\lambda})^W \simeq k\]

\noindent where the second to last equivalence follows since $\underline{P}_{\lambda} \simeq \AvN(k)$ lies in the nondegenerate subcategory, and the last equivalence follows since $C_{\lambda}^W$ can be identified with the regular $W$-representation by Soergel's endomorphismensatz. Therefore, we see our functor is fully faithful.
\end{proof}

For any category $\C$ with a $G$-action, we use the same notation as in the \lq universal case\rq{} of \cref{UniversalAvNIsFullyFaithful} for the functor \[\AvNShiftedAndLifted := \AvNLifted[\mathrm{dim}(N)]: \C^{N^-, \psi} \to \C^{N, W}_{\mathrm{nondeg}}\]  where $\AvNLifted$ is the corresponding lift of $\AvN: \C^{N^-, \psi} \to \C^N_{\mathrm{nondeg}}$ to $\C^{N, W}_{\mathrm{nondeg}}$. If $\C$ is equipped with a $t$-structure compatible with the $G$-action, then $\AvNShiftedAndLifted$ is $t$-exact by the $t$-exactness in the universal case of \cref{UniversalAvNIsFullyFaithful}, completing the proof of \cref{N Averaging Fully Faithful on Whittaker Subcategory}. 

\subsection{Monoidality of Averaging Functor}
In this section, we prove the following proposition: 
\raggedbottom
\begin{Proposition}\label{MonoidalityOfAvN}
    The composite functor \[\Hpsi \xrightarrow{\AvNShiftedAndLifted} \D(N\backslash G/_{-\psi}N^-)^W \simeq \D(T)^W\] is monoidal. 
\end{Proposition}

   \begin{proof}
The functoriality of averaging gives that the following diagram canonically commutes:
\noindent
\begin{equation}\label{N Invariants of End of Whittaker Category is N Averaging to Torus}
  \xymatrix@R+2em@C+2em{
   \uEnd_G(\D(G/_{-\psi}N^-)) \ar[r]^{(-)^N} \ar[d]^{\text{ev}_{\delta_{N^{-}, \psi}}} & \uEnd_{T \rtimes W}(\D(N\backslash G/_{-\psi}N^-)) \ar[d]^{\text{ev}_{\delta_{N\backslash NN^-/N^-}}}\\
   \Hpsi \ar[r]^{\AvNShiftedAndLifted} & \D(N\backslash G/_{-\psi}N^-)^W
  }
 \end{equation}
 
 \noindent where the vertical arrows are the evaluation maps and the top arrow is the functor induced by $W$-equivariance, see \cref{Summary of Companion Paper}(3). Similarly, note if $\jmath^*$ denotes the isomorphism of \cref{SupportOfWhittakerSheaves} (which we recall is $W$-equivariant, again by \cref{Summary of Companion Paper}(3)) then we have a canonical identification of the following diagram 
 
 \raggedbottom
 \begin{equation}\label{Isomorphism of Categories for SupportOfWhittakerSheaves Yields Commuting Evaluation Diagram}
  \xymatrix@R+2em@C+2em{
\uEnd_{T \rtimes W}(\D(N\backslash G/_{-\psi}N^-))\ar[d]^{\text{ev}_{\AvN(\delta_{N^{-}, \psi})}} \ar[r]^{\textcolor{white}{hello}I \circ - \circ I^{-1}} & \uEnd_{T \rtimes W}(\D(T)) \ar[d]^{\text{ev}_{\delta_{1}}}\\
  \D(N\backslash G/_{-\psi}N^-)^W \ar[r]^{\jmath^*} & \D(T)^W
  }
 \end{equation}
 
 \noindent since $I(\AvN(\delta_{N^-, \psi})) \simeq \delta_1 \in \D(T)$. Now, note that the functors given by the top horizontal arrows of \labelcref{N Invariants of End of Whittaker Category is N Averaging to Torus} and \labelcref{Isomorphism of Categories for SupportOfWhittakerSheaves Yields Commuting Evaluation Diagram} are monoidal, the left vertical arrow of \labelcref{N Invariants of End of Whittaker Category is N Averaging to Torus} is a monoidal equivalence, and the right vertical arrow of \labelcref{Isomorphism of Categories for SupportOfWhittakerSheaves Yields Commuting Evaluation Diagram} is a monoidal equivalence. Thus, since these diagrams commute, we see that $I\AvNShiftedAndLifted$ is a composite of monoidal functors. 
\end{proof}

In what follows we lightly abuse notation and use the same symbol $\AvNShiftedAndLifted$ for the composite of functors in \cref{MonoidalityOfAvN}.

\subsection{Proofs of \cref{Mellin Transform for biWhittaker Sheaves} from \cref{N Averaging Fully Faithful on Whittaker Subcategory}}\label{Reduction to Fixed Central Character}
In this subsection, we verify the essential image of our shifted and lifted functor $\AvNShiftedAndLifted$ and complete the proofs of \cref{Mellin Transform for biWhittaker Sheaves} and \cref{Abelian Categorical Mellin Transform for biWhittaker Sheaves}. We first make the following computation on the essential image:

\begin{Proposition}\label{Averaging Essential Image of AvN Has Trivial Representation of Order Two Reflection Group}
Fix some simple root $\alpha$. The composite $\overline{\mathsf{A}}_*^{\alpha}$ of the functors 
\raggedbottom
\[\D(G)^{N^{-}, \psi} \xrightarrow{\AvNShiftedAndLifted} \D(G/N)_{\text{nondeg}}^{\langle r_{\alpha} \rangle} \xrightarrow{\text{Av}_*^{\mathbb{G}_m^{\alpha}}} \D(G/N)_{\text{nondeg}}^{\mathbb{G}_m^{\alpha} \rtimes \langle r_{\alpha} \rangle}  \simeq \D(G/N)_{\text{nondeg}}^{\mathbb{G}_m^{\alpha}} \otimes \text{Rep}(\langle r_{\alpha} \rangle) \]

\noindent where the final equivalence is given by \cref{Summary of Companion Paper}(2), factors through the subcategory labelled by the trivial representation.
\end{Proposition}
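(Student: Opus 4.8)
The plan is to reduce the statement to a claim about the bi-Whittaker Hecke category, where it becomes an instance of the descent-to-the-coarse-quotient criterion recorded in \cref{New Various Conditions for Wext Equivariant Sheaf to Satisfy Coxteter Descent}(3), and then to deduce that criterion from the triviality of the Gelfand-Graev action on the $\mathbb{G}_m^{\alpha}$-equivariant category established in \cref{Action of Order Two Simple Reflection Group on Associated G_m Invariants is Trivial}. First I would note that, since $\D(G)^{N^{-}, \psi}$ is a $G$-category and $\AvN$ is a functor of $G$-categories with integral kernel $\delta_1[-\dim(N)] \in \D(N\backslash G/_{-\psi}N^-)_{\text{nondeg}} \simeq \D(T)$ (this kernel computation is exactly what was carried out in the proof of \cref{UniversalAvNIsFullyFaithful}, using \cref{MonoidalityOfAvN}, \cref{Easier Classification of Kernel Properties}, and \cref{SupportOfWhittakerSheaves}), the composite in the statement is itself given by convolution with an object of the equivariantized Hecke category. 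Concretely, post-composing $\text{Av}_*^{\mathbb{G}_m^{\alpha}}\text{Av}_*^N$ with the forgetful functor back to $\D(G/N)_{\text{nondeg}}^{\mathbb{G}_m^{\alpha}}$ and averaging, the entire composite factors through $\D(N^-_\psi \backslash G/N)$, which by \cref{SupportOfWhittakerSheaves} and \cref{WActionOnWhittaker} is $W$-equivariantly $\D(T)$; so it suffices to analyze the image under the corresponding functor $\D(T)^{\langle s_\alpha\rangle} \xrightarrow{\text{Av}_*^{\mathbb{G}_m^\alpha}} \D(T/\mathbb{G}_m^\alpha)^{\langle s_\alpha \rangle} \simeq \D(T/\mathbb{G}_m^\alpha)\otimes \text{Rep}(\langle s_\alpha\rangle)$ of the kernel $\delta_1$, equipped with its canonical $\langle s_\alpha\rangle$-equivariant structure.

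The key step is then to identify this $\langle s_\alpha \rangle$-equivariance on $\delta_1 \in \D(T)$. Here I would invoke \cref{Action of Order Two Simple Reflection Group on Associated G_m Invariants is Trivial}: the $\langle s_\alpha\rangle$-action on $\D(G/N)^{\mathbb{G}_m^\alpha}_{\text{nondeg}}$ is trivial, and $\D(N^-_\psi \backslash G/N)^{\mathbb{G}_m^\alpha}$ is a full $\langle s_\alpha\rangle$-subcategory of it by \cref{SupportOfWhittakerSheaves} and \cref{WActionOnWhittaker} (this last containment is exactly the one exploited at the end of the proof of \cref{Action of Order Two Simple Reflection Group on Associated G_m Invariants is Trivial}). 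Consequently, the image of the Whittaker averaging of $\delta_{N^-,\psi}$ — which is precisely the kernel object, an object of $\D(N^-_\psi\backslash G/N)$ — carries the \emph{trivial} $\langle s_\alpha\rangle$-equivariant structure after applying $\text{Av}_*^{\mathbb{G}_m^\alpha}$. Translating through the $W$-equivariant identification with $\D(T)$, the $\mathbb{G}_m^\alpha$-averaging of $\delta_1$, viewed in $\D(T/\mathbb{G}_m^\alpha)\otimes \text{Rep}(\langle s_\alpha\rangle)$, lands in the summand $\D(T/\mathbb{G}_m^\alpha)\otimes \text{Vect}_{\text{triv}}$, which is exactly the assertion that the composite factors through the subcategory labelled by the trivial representation.

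The main obstacle will be bookkeeping of equivariant structures: making rigorous the claim that the $\langle s_\alpha\rangle$-equivariant structure on the convolution kernel $\delta_1$ (coming from the Gelfand-Graev action on $\D(G/N)_{\text{nondeg}}$ and the identification $\D(N^-_\psi\backslash G/N)\simeq \D(T)$) is the one transported from the trivial action under the containment $\D(N^-_\psi\backslash G/N)^{\mathbb{G}_m^\alpha}\hookrightarrow \D(G/N)^{\mathbb{G}_m^\alpha}_{\text{nondeg}}$, rather than merely that the underlying objects agree. This requires knowing that the Gelfand-Graev action restricted to the Whittaker subcategory is compatible with the $(W,\cdot)$-action on $\D(T)$ — which is \cref{WActionOnWhittaker} — together with \cref{Action of Order Two Simple Reflection Group on Associated G_m Invariants is Trivial} to upgrade "the action of $s_\alpha$ is the identity \emph{functor}" to "the action of the group $\langle s_\alpha\rangle$ is the \emph{trivial} action," i.e. that the coherence datum (the class in $H^2$ of $B\langle s_\alpha\rangle$, or equivalently the image of the $2$-cell) is also trivial. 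Once those two inputs are in hand, the factorization through $\text{Vect}_{\text{triv}}$ is formal, using the conservativity of weak averaging to reduce the check to the weakly-$T/\mathbb{G}_m^\alpha$-invariant category as in the proof of \cref{New Various Conditions for Wext Equivariant Sheaf to Satisfy Coxteter Descent}.
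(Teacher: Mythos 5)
Your overall route is the paper's: reduce via \cref{UniversalCaseRemark} to the single object $A(\delta_{N^-,\psi})$, identify $\text{Av}_*^N(\delta_{N^-,\psi})$ with $\delta_1\in\D(T)$ using \cref{MonoidalityOfAvN} and \cref{SupportOfWhittakerSheaves}, and then argue that the averaged kernel carries the trivial $\langle s_\alpha\rangle$-representation. The gap is in how you justify that last step. You write that since the $\langle s_\alpha\rangle$-action on $\D(G/N)^{\mathbb{G}_m^\alpha}_{\text{nondeg}}$ is trivial and $\D(N^-_\psi\backslash G/N)^{\mathbb{G}_m^\alpha}$ is a full $\langle s_\alpha\rangle$-subcategory, ``consequently'' the object carries the trivial equivariant structure. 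That inference is invalid: triviality of the action on a category is exactly what produces the decomposition into the summands labelled by $\text{Vect}_{\text{triv}}$ and the sign representation, and it says nothing about which summand a particular equivariant object occupies (the sign representation in $\text{Vect}^{\langle s\rangle}$ for the trivial action is the basic counterexample). You flag this bookkeeping issue yourself in the final paragraph, but the patch you propose --- \cref{WActionOnWhittaker} together with \cref{Action of Order Two Simple Reflection Group on Associated G_m Invariants is Trivial} --- still only identifies the $\langle s_\alpha\rangle$-action on the \emph{category}, not the equivariant structure on the \emph{object} $\delta_1$.

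The missing ingredient, which is what the paper's proof supplies, is a determination of the equivariant structure on the kernel itself: by \cref{WActionOnWhittaker} and the construction of the lift in \cref{UniversalAvNIsFullyFaithful}, the $W$-equivariance on $\text{Av}_*^N(\delta_{N^-,\psi})\simeq\delta_1$ is the one induced by the $W$-equivariant closed embedding $\ast\hookrightarrow T$ of the fixed point carrying its trivial $W$-action. Granting that, $\text{Av}_*^{\mathbb{G}_m^\alpha}(\delta_1)$ is the pushforward of the trivial representation along the $s_\alpha$-fixed point $\ast\to T/\mathbb{G}_m^\alpha$, hence lands in the trivial summand. With that observation inserted in place of your ``consequently,'' the argument closes and coincides with the paper's.
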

\begin{proof}
By \cite{BZGO}, it suffices to show that 
\raggedbottom
\[\overline{\mathsf{A}}_*^{\alpha}(\delta_{N^{-}, \psi}) \in \D(N\backslash G)_{\text{nondeg}}^{(N^{-}, \psi), \mathbb{G}_m^{\alpha}} \otimes \text{Rep}(\langle r_{\alpha} \rangle)\]

\noindent lies in the full ($G$-)subcategory labelled by the trivial representation. However, by direct computation or \cref{MonoidalityOfAvN}, we have that the underlying object of $\AvNShiftedAndLifted(\delta_{N^{-}, \psi})$ in $\D(T)$ can be identified with $\delta_1 \in \D(T)$. Furthermore, by the $W$-equivariance of the equivalence of \cref{SupportOfWhittakerSheaves} (\cite[Proposition 5.5.2]{Gin}) we see that the given $W$-equivariance  on $\text{Av}_*^N(\delta_{N^{-}, \psi})$ can be identified with the $W$-equivariance on $\delta_1$ given by the $W$-equivariant closed embedding $\ast \xhookrightarrow{} T$.  However, for the equivariant sheaf $\delta_1 \in \D(T)^W$, we see that $\text{Av}_*^{\mathbb{G}_m^{\alpha}}(\delta_1)$ acquires a trivial $\langle r_{\alpha} \rangle$-representation. Therefore, the $\langle r_{\alpha} \rangle$ action is trivial and so the same holds for $\overline{\mathsf{A}}_*^{\alpha}(\delta_{N^{-}, \psi}) \simeq \text{Av}_*^{\mathbb{G}_m^{\alpha}}\AvNShiftedAndLifted(\delta_{N^{-}, \psi})$. 
\end{proof}

\begin{Corollary}\label{All Objects of Lifted AvN descend to the coarse quotient}
The functor $\AvNShiftedAndLifted: \Hpsi \to \D(T)^W$ factors through the full subcategory of objects of $\D(T)^W$ which descend to the coarse quotient under the Mellin transform. 
\end{Corollary}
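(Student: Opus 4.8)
The goal is to verify that the essential image of $\AvNShiftedAndLifted: \Hpsi \hookrightarrow \D(T)^W$ lands in the full subcategory of $W$-equivariant sheaves on $T$ (equivalently, on $\LTd$ via the Mellin transform) which descend to the coarse quotient $\LTd\sslash\Wext$. The strategy is to use the criterion of \cref{New Various Conditions for Wext Equivariant Sheaf to Satisfy Coxteter Descent}(3): a sheaf $\F \in \D(T)^W$ descends to the coarse quotient if and only if for every simple coroot $\gamma$ with associated simple reflection $s$, the object $\text{Av}_*^{\mathbb{G}_m^{\gamma}}(\text{oblv}^{\langle s \rangle}(\F)) \in \D(T/\mathbb{G}_m^{\gamma})^{\langle s \rangle} \simeq \D(T/\mathbb{G}_m^{\gamma}) \otimes \text{Rep}(\langle s \rangle)$ lies in the subcategory labelled by the trivial representation of $\langle s \rangle$. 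We have essentially already done the hard work: \cref{Averaging Essential Image of AvN Has Trivial Representation of Order Two Reflection Group} shows exactly this, stated on the $\D(G/N)$-side, namely that the composite
\raggedbottom
\[\D(G)^{N^{-}, \psi} \xrightarrow{\text{Av}_*^N} \D(G/N)_{\text{nondeg}}^{\langle s_{\alpha} \rangle} \xrightarrow{\text{Av}_*^{\mathbb{G}_m^{\alpha}}} \D(G/N)_{\text{nondeg}}^{\mathbb{G}_m^{\alpha}} \otimes \text{Rep}(\langle s_{\alpha} \rangle)\]
factors through the trivial representation summand.

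First I would recall the equivalence $\D(G/N)^{N^-, \psi} \xrightarrow{\sim} \D(T)$ of \cref{SupportOfWhittakerSheaves}, together with the fact that it is $W$-equivariant by \cref{WActionOnWhittaker}; combined with \cref{No Degenerate Whittaker Objects in D(G/N)}, which gives $\D(G/N)^{N^-,\psi} \simeq \D(G/N)_{\text{nondeg}}^{N^-,\psi}$, this identifies the relevant Whittaker invariants of $\D(G)$ acting on the right with $\Hpsi$ (via the bi-Whittaker structure) and the target of $\AvNShiftedAndLifted$ with $\D(T)^W$. Next I would observe that the functor $\AvNShiftedAndLifted$ composed with $\text{oblv}^W$ is, up to cohomological shift, exactly the averaging functor $\text{Av}_*^N$ restricted to $\D(G)^{N^-,\psi}$ landing in $\D(G/N)^N_{\text{nondeg}}$, as recorded in the remarks following \cref{Mellin Transform for biWhittaker Sheaves} and used in \cref{UniversalAvNIsFullyFaithful}. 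Then I would note that, for a fixed simple root $\alpha$, the further application of $\text{Av}_*^{\mathbb{G}_m^{\alpha}}$ and the identification of the $\langle s_\alpha\rangle$-action via \cref{Action of Order Two Simple Reflection Group on Associated G_m Invariants is Trivial} precisely matches the composite appearing in \cref{New Various Conditions for Wext Equivariant Sheaf to Satisfy Coxteter Descent}(3), once one transports everything along the Mellin transform and the equivalence $\D(G/N)^{N^-,\psi}\simeq \D(T)$.

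The bulk of the argument is therefore bookkeeping: carefully checking that the $\mathbb{G}_m^{\alpha}$-averaging and $\langle s_\alpha\rangle$-invariance constructions on the $\D(G/N)_{\text{nondeg}}$-side correspond, under these equivalences, to the operations $\text{Av}_*^{\mathbb{G}_m^{\gamma}}$ and $\text{oblv}^{\langle s\rangle}$ on $\D(T)^W$ that appear in condition (3), with $\gamma = \alpha^\vee$. This compatibility follows from the functoriality of averaging (the diagrams in the proof of \cref{MonoidalityOfAvN}) together with the $W$-equivariance of the identifications. Having established this, \cref{Averaging Essential Image of AvN Has Trivial Representation of Order Two Reflection Group} says exactly that for each simple $\alpha$ the image of $\AvNShiftedAndLifted$ satisfies the trivial-representation condition, so \cref{New Various Conditions for Wext Equivariant Sheaf to Satisfy Coxteter Descent} gives the conclusion. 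The main obstacle is purely the translation between the group-theoretic picture on $\D(G/N)$ and the spectral picture on $\LTd$; once the dictionary is set up correctly there is no further content. I would present this as a short derivation citing \cref{SupportOfWhittakerSheaves}, \cref{WActionOnWhittaker}, \cref{Action of Order Two Simple Reflection Group on Associated G_m Invariants is Trivial}, \cref{Averaging Essential Image of AvN Has Trivial Representation of Order Two Reflection Group}, and \cref{New Various Conditions for Wext Equivariant Sheaf to Satisfy Coxteter Descent}(3).
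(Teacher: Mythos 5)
Your proposal is correct and follows essentially the same route as the paper: the paper's proof likewise reduces to criterion (3) of \cref{New Various Conditions for Wext Equivariant Sheaf to Satisfy Coxteter Descent} and then deduces the triviality of the $\langle s_{\alpha}\rangle$-representation by taking $(N^-,\psi)$-invariants of the composite functor in \cref{Averaging Essential Image of AvN Has Trivial Representation of Order Two Reflection Group}. The additional bookkeeping you describe (transporting along \cref{SupportOfWhittakerSheaves} and \cref{WActionOnWhittaker}) is exactly the implicit content of that invariants-taking step, so nothing is missing.
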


\begin{proof}
By the final point of \cref{New Various Conditions for Wext Equivariant Sheaf to Satisfy Coxteter Descent}, it suffices to show that if $\F \in \Hpsi$ then the canonical $\langle r_{\alpha} \rangle$-representation on $\text{Av}_*^{\mathbb{G}_m^{\alpha}}\AvNShiftedAndLifted(\F)$ is trivial. However, this directly follows from taking $(N^-, \psi)$-invariants of the composite functor of \cref{Averaging Essential Image of AvN Has Trivial Representation of Order Two Reflection Group}. 
\end{proof}

\begin{Lemma}\label{Restriction to Character and Whittaker Averaging on Other Side Commute}
Fix some field-valued point $\lambda$. We have a canonical isomorphism of functors:
\raggedbottom
\begin{equation*}
  \xymatrix@R+2em@C+2em{
  \D(B_{\lambda}\backslash G/N)_{\text{nondeg}}^{T,w} \ar[r]^{\text{Av}_!^{-\psi} \text{oblv}^{N, (T,w)}\text{  }} & \D(B_{\lambda}\backslash G/_{-\psi}N^-) \simeq \text{Vect}\\
  \D(N^-_\psi\backslash G/N)^{T,w} \ar[u]^{\text{Av}_*^{B_{\lambda}}\text{oblv}^{N^{-}, \psi}} \ar[r]^{\text{Av}_!^{-\psi} \text{oblv}^{N, (T,w)}}  & \Hpsi \ar[u]_{\text{Av}_*^{B_{\lambda}}\text{oblv}^{N^{-}, -\psi}}
  }
\end{equation*}
\end{Lemma}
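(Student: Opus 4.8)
The plan is to establish the stated commutativity by exhibiting both composites as the result of applying a single functor built out of averaging kernels, and then invoking the associativity and base-change properties of convolution. First I would observe that all four functors appearing in the square are $\D(G)$-linear (or at least $\D(G/N)$-linear on the appropriate sides), being composites of forgetful functors and averaging functors, each of which is given by convolution with an explicit integral kernel. Concretely, the right Whittaker averaging $\text{Av}_!^{-\psi}$ is given by convolution on the right with $\delta_{N^-_{-\psi}} \in \D(N\backslash G/_{-\psi}N^-)$, while the left averaging $\text{Av}_*^{B_\lambda}$ is convolution on the left with the appropriate kernel in $\D(B_\lambda \backslash G/N)$; since these act on opposite sides of $\D(G)$ (one is a left action, one a right action), they automatically commute. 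This is the conceptual heart of the statement: left and right convolutions on a bimodule category commute.

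The key steps, in order, would be: (1) Rewrite each of the four functors in the square as convolution with an integral kernel, using \cref{UniversalCaseRemark} and the identifications of Whittaker categories from \cref{SupportOfWhittakerSheaves} and \cref{Easier Classification of Kernel Properties}. Here one uses that $\D(G)^{N^-,\psi}$ and its variants are modules over the relevant Hecke categories, so that $\text{oblv}^{N^-,\psi}$ followed by $\text{Av}_*^{B_\lambda}$ is convolution on the left, and $\text{oblv}^{N,(T,w)}$ followed by $\text{Av}_!^{-\psi}$ is convolution on the right. (2) Invoke the associativity of the convolution action — since the left $B_\lambda$-averaging kernel lives in $\D(B_\lambda\backslash G/N)$ and the right $(-\psi)$-averaging kernel lives in $\D(N\backslash G/_{-\psi}N^-)$, convolving in either order produces the same object in $\D(B_\lambda\backslash G/_{-\psi}N^-)$. (3) Verify that the right-hand vertical identification $\D(B_\lambda\backslash G/_{-\psi}N^-) \simeq \text{Vect}$ and the various horizontal identifications are compatible with these kernel descriptions; this follows from \cref{SupportOfWhittakerSheaves} applied on the $N^-_{-\psi}$ side together with the observation that $B_\lambda$-averaging of the Whittaker-supported sheaf lands in a one-dimensional space. (4) Finally, note that one must be slightly careful about which averaging functors have continuous left versus right adjoints: $\text{Av}_!^{-\psi}$ is the (partially defined) left adjoint to the forgetful functor, but on the nondegenerate/Whittaker subcategories it agrees with a shift of $\text{Av}_*^{-\psi}$ by \cref{BBMAdjointTheorem}, so one can freely pass between the two and apply base change for adjoints as in \cref{Adjoint Functor is Automatically G Equivariant}.

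The main obstacle I anticipate is step (4), namely tracking the interaction between the partially-defined left adjoint $\text{Av}_!^{-\psi}$ and the genuinely continuous right adjoint $\text{Av}_*^{B_\lambda}$: a priori the canonical comparison natural transformation in the square could go the wrong direction or fail to be an isomorphism on objects outside the Whittaker subcategory. The resolution is that on the subcategory $\D(N^-_\psi\backslash G/N)^{T,w}$ (and its image under $\text{Av}_*^{B_\lambda}$) one is always working with objects on which $\text{Av}_!^{-\psi}$ is defined and, by \cref{BBMAdjointTheorem} together with \cref{No Degenerate Whittaker Objects in D(G/N)}, agrees with $\text{Av}_*^{-\psi}$ up to the fixed shift $[2\dim N]$; since $\text{Av}_*^{-\psi}$ is a continuous functor of $G$-categories acting on the right and $\text{Av}_*^{B_\lambda}$ acts on the left, the Beck--Chevalley/base-change isomorphism for the commuting left and right actions gives the desired canonical isomorphism of functors. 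A secondary, more bookkeeping-level obstacle is making sure the $(T,w)$-equivariance is carried along correctly through all the identifications; this is handled by \cref{Adjoint Functor is Automatically G Equivariant} applied to the $T$-action, which guarantees all the relevant adjoints and hence the commuting square are automatically $(T,w)$-linear.
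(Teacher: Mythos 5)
Your proposal is correct and rests on the same core observation as the paper's proof: the horizontal arrows average with respect to the right $G$-action while the vertical arrows average with respect to the left $G$-action, so the square commutes because all four functors are maps of $G$-categories acting on opposite sides. The paper's argument is exactly this one-line remark; your additional care about the partially defined left adjoint $\text{Av}_!^{-\psi}$ is reasonable but not needed once one phrases everything as morphisms of $G$-categories.
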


\begin{proof}
In the above diagram, the horizontal arrows are averaging with respect to the right action, and the vertical arrows are averaging with respect to the left action. Therefore since all four functors are maps of $G$-categories, the diagram canonically commutes. 
\end{proof}

In the diagram of \cref{Restriction to Character and Whittaker Averaging on Other Side Commute}, we claim that the associated right adjoints to the horizontal arrows are functors of $\Wext$-categories, where we take the $\Wext$-action to be trivial on the categories of the right side of the diagram. To see this, note that via the Mellin transform, we may identify the right adjoint to the bottom functor as the composite of a $W$-equivariant functor and, via the Mellin transform, the forgetful functor $\text{oblv}^{\characterlatticeforT}$. In particular, the fact that $\Wext$ is placid allows us to apply \cite[Lemma D.4.4]{GaiWhit} to show that the adjoint is also $\Wext$-equivariant and induces a functor on coinvariants. Since invariance is coinvariance for infinite discrete groups (\cite[Proposition 2.7]{GannonNew}) we therefore see:

\begin{Lemma}\label{FakeBasechangeLemmaMellinTransformed}
Fix some field-valued $\lambda \in \LTd$. Then the following diagram canonically commutes:
\raggedbottom
\begin{equation*}
  \xymatrix@R+2em@C+2em{
  \D(B_{\lambda}\backslash G/N)_{\text{nondeg}}^W \ar[r]^{\text{Av}_!^{-\psi}} & \D(B_{\lambda}\backslash G/_{-\psi}N^-) \simeq \text{Vect}\\
  \D(N^-_\psi\backslash G/N)^W \ar[u]^{\text{Av}_*^{B_{\lambda}}\text{oblv}^{N^{-}, \psi}} \ar[r]^{\text{Av}_!^{-\psi}}  & \Hpsi \ar[u]^{\text{Av}_*^{B_{\lambda}}\text{oblv}^{N^{-}, \psi}}
  }
\end{equation*}
\end{Lemma}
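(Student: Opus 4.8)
The plan is to deduce \cref{FakeBasechangeLemmaMellinTransformed} from \cref{Restriction to Character and Whittaker Averaging on Other Side Commute} by passing to $\characterlatticeforT$-invariants (equivalently, via the Mellin transform, by descending along $\LTd \to \LTd/\characterlatticeforT$). The key observation, which I would state carefully first, is that each of the four functors in the square of \cref{Restriction to Character and Whittaker Averaging on Other Side Commute} is a map of $\D(T)^W$-module categories, where on the right-hand column $\D(T)^W$ acts through its symmetric monoidal structure on $\text{Vect}$ and $\Hpsi$ via the $\AvNShiftedAndLifted$-type functors, and on the left-hand column the $(T,w)$-weak invariants are by definition $\Symt$-linear, hence $\IndCoh(\LTd/\characterlatticeforT)^W \simeq \D(T)^W$-linear once we remember the residual $W$-action. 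More precisely: the horizontal arrows are $G$-equivariant and so by \cref{Adjoint Functor is Automatically G Equivariant} their right adjoints are $G$-equivariant; the category $\D(N^-_{\psi}\backslash G/N)^{T,w}$ carries an action of $\Wext$ by \cref{Corollary on T,w Cats} (since $\D(N^-_{\psi}\backslash G/N) \simeq \D(T)$ is a nondegenerate $G$-category — indeed $\C = \D(G)^{N^-, -\psi}$ is nondegenerate by \cref{NoMonodromicWhittaker}/\cref{BBMExample}), and likewise $\D(B_{\lambda}\backslash G/N)^{T,w}_{\text{nondeg}}$ does.

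Concretely, I would argue as follows. Start from the commuting square of \cref{Restriction to Character and Whittaker Averaging on Other Side Commute}, whose vertical arrows are $\text{Av}_*^{B_{\lambda}}\text{oblv}^{N^{-},\psi}$ and whose horizontal arrows are $\text{Av}_!^{-\psi}\text{oblv}^{N,(T,w)}$. The horizontal arrows factor as $\text{oblv}^{N,(T,w)}$ followed by right-Whittaker averaging; the residual weak $T$-action (= the $\Symt$-module structure) is carried along, and the right adjoint to $\text{Av}_!^{-\psi}$ on each row is, via \cref{SupportOfWhittakerSheaves} and the Mellin transform, identified with a $W$-equivariant functor composed with $\text{oblv}^{\characterlatticeforT}: \D(T)^W \to \IndCoh(\LTd)^W$ — this is the sentence in the excerpt preceding the lemma that I may invoke verbatim. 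Now I take the $\characterlatticeforT$-weak invariants of the whole square. On the bottom row, $\D(N^-_{\psi}\backslash G/N)^{T,w}$ becomes $\D(N^-_{\psi}\backslash G/N)^{W}$ after applying $(-)^{\characterlatticeforT}$ to the residual $\Symt$-action and Mellin-transforming (using $\IndCoh(\LTd)^{\characterlatticeforT} \simeq \IndCoh(\LTd/\characterlatticeforT) \simeq \D(T)$, as in the lemma "If $\C$ is a category with a $T$-action, then $(\C^{T,w})^{\characterlatticeforT}\simeq \C$"), and the bottom horizontal arrow $\text{Av}_!^{-\psi}\text{oblv}^{N,(T,w)}$ becomes $\text{Av}_!^{-\psi}: \Hpsi \to \text{Vect}$ in the new normalization; similarly the top row gives $\D(B_\lambda\backslash G/N)^W_{\text{nondeg}}$ and the top arrow $\text{Av}_!^{-\psi}$. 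The vertical arrows, being $G$-equivariant and hence compatible with this procedure by \cref{Adjoint Functor is Automatically G Equivariant} and \cref{Inv=Coinv for Group Ind-Schemes Which Are Discrete Sets of Points}, descend to $\text{Av}_*^{B_{\lambda}}\text{oblv}^{N^{-},\psi}$ as displayed, and commutativity is preserved because taking invariants is a functor. This yields exactly the claimed commuting square.

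The main technical point — and where I would spend the most care — is bookkeeping the two distinct $\Wext$- (resp. $W$-) actions and making sure the "trivial $\Wext$-action on the right column" really is the one induced by the procedure: $\text{Vect}$ and $\Hpsi$ carry a priori no $\Wext$-action, and the content is that the $\characterlatticeforT$-invariants functor applied to the bottom arrow of \cref{Restriction to Character and Whittaker Averaging on Other Side Commute} genuinely lands in $W$-equivariant data with the residual $\characterlatticeforT$ already absorbed, so that $\D(N^-_\psi\backslash G/N)^{T,w}$ with its $\Wext$-action has $(-)^{\characterlatticeforT}$ giving back $\D(N^-_\psi\backslash G/N)^{W} \simeq \D(T)^W$. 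This is precisely the assertion, stated in the excerpt just before \cref{FakeBasechangeLemmaMellinTransformed}, that "the associated right adjoints to the horizontal arrows are functors of $\Wext$-categories, where we take the $\Wext$-action to be trivial on the categories of the right side of the diagram," together with the placidity of $\Wext$ invoked through \cref{Adjoint Functor is Automatically G Equivariant}; I would make this explicit by writing out that the right adjoint to $\text{Av}_!^{-\psi}$ on the bottom row is $\Wext$-equivariant, descends to a functor on $\characterlatticeforT$-coinvariants, and that $\characterlatticeforT$-invariants $\simeq$ $\characterlatticeforT$-coinvariants by \cref{Inv=Coinv for Group Ind-Schemes Which Are Discrete Sets of Points}. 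The remaining steps — that $G$-equivariant functors between $G$-categories induce compatible functors on $\characterlatticeforT$-invariants of the relevant $(T,w)$-invariant categories, and that this is compatible with the Mellin transform — are formal consequences of \cref{Adjoint Functor is Automatically G Equivariant}, \cref{Corollary on T,w Cats}, and the Mellin transform, so I would treat them briskly.
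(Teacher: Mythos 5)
Your proposal follows essentially the same route as the paper: the paper likewise deduces \cref{FakeBasechangeLemmaMellinTransformed} from \cref{Restriction to Character and Whittaker Averaging on Other Side Commute} by observing that the right adjoints to the horizontal arrows are $\Wext$-equivariant (with trivial action on the right-hand column), invoking the placidity of $\Wext$ via \cref{Adjoint Functor is Automatically G Equivariant} and then \cref{Inv=Coinv for Group Ind-Schemes Which Are Discrete Sets of Points} to pass to (co)invariants. The only point to tidy is notational: writing that $(-)^{\characterlatticeforT}$ alone turns $\D(N^-_\psi\backslash G/N)^{T,w}$ into $\D(N^-_\psi\backslash G/N)^{W}$ conflates $\characterlatticeforT$-invariants with the full $\Wext$-invariants (i.e.\ $\characterlatticeforT$-invariants retaining the residual $W$-action, followed by $W$-invariants), but the surrounding discussion makes clear you intend the latter.
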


\begin{proof}[Proof of \cref{Mellin Transform for biWhittaker Sheaves}] We have seen in \cref{UniversalAvNIsFullyFaithful} that the functor $\AvNShiftedAndLifted$ is fully faithful and is $t$-exact by \cref{T Exactness of Shifted Psi Averaging}. Furthermore, we have shown that this functor factors through the full subcategory of sheaves descending to the coarse quotient in \cref{All Objects of Lifted AvN descend to the coarse quotient}. Therefore it remains to show that the adjoint $\Avpsi$ is conservative on this the subcategory of sheaves descending to the coarse quotient. Let $\F \in \D(T)^W$ be a sheaf which descends to the coarse quotient. By considering the fully faithful embedding of the zero category into the full $G$-subcategory $\D(G/N)$ generated by $\text{oblv}^{N^{-}, \psi}(\F)$, by \cref{Field Valued Point Theorem}, we see that there exists some field-valued point $\lambda$ such that $\text{Av}_*^{B_{\lambda}}\text{oblv}^{N^{-}, \psi}(\F)$ does not vanish. Since averaging functors respect extension of scalars (which follows immediately from \cite[Lemma 2.49]{GannonNew}) it suffices to assume $\lambda$ is a $k$-point. 

Now note that the following diagram commtues
\raggedbottom
\begin{equation*}
  \xymatrix@R+2em@C+2em{
  \D(B_{\lambda}\backslash G)^{N^-, -\psi} \ar[r]^{\sim} & \IndCoh(\ast) \\
  \D(B_{\lambda}\backslash G/N)_{\text{nondeg}}^W \ar[r]^{\tildeV} \ar[u]^{\text{Av}_!^{-\psi}[-\text{dim}(N)]}  & \IndCoh(\text{Spec}(C_{\lambda}) \mathop{\times}\limits^{\Waff_{\lambda}} W)^W \ar[u]^{(\terminalmapfromC_*^{\IndCoh}(-))^W}
  }
\end{equation*}

\noindent since again we may identify the images the left adjoints via the image of $k \in \text{Vect}$. Thus since by assumption $\F \in \D(T)^W$ descends to the coarse quotient, by \cref{New Various Conditions for Wext Equivariant Sheaf to Satisfy Coxteter Descent}(1) that the sheaf $\text{Av}_!^{-\psi}\text{Av}_*^{B_{\lambda}}\text{oblv}^{N^-_{\ell}, \psi}(\F)$ does not vanish. Thus by \cref{FakeBasechangeLemmaMellinTransformed}, we see that $\text{Av}_!^{\psi}(\F)$ does not vanish. 
\end{proof}

\subsection{Proof of \cref{Abelian Categorical Mellin Transform for biWhittaker Sheaves} from \cref{Mellin Transform for biWhittaker Sheaves}}
We now complete the proof of \cref{Abelian Categorical Mellin Transform for biWhittaker Sheaves}. As in \cref{Our Argument Shows a t-Exact Equivalence of IndCoh on Coarse Quotient and Hpsi}, it suffices to show that each functor in \cref{Mellin Transform for biWhittaker Sheaves} is $t$-exact. By \cref{T Exactness of Shifted Psi Averaging}, $\AvNshifted$ is $t$-exact, and $\AvNShiftedAndLifted$ is $t$-exact since $\text{oblv}^W$ reflects the $t$-structure. Furthermore, the $!$-pullback by the quotient $\LTd/\Wext \to \LTd\sslash \Wext$ is $t$-exact by \cite[Proposition 4.18]{GannonDescentToTheCoarseQuotientForPseudoreflectionAndAffineWeylGroups}. Therefore it remains to show the following:

\begin{Proposition}
The shifted Mellin transform $\FourierMukai[d]$ is $t$-exact, where $d := \text{dim}(\LTd)$.
\end{Proposition}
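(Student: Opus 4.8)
The plan is to reduce the $t$-exactness of the shifted Mellin transform $\FourierMukai[d]$, where $d = \dim(\LTd)$, to the known $t$-exactness of the averaging functors appearing in \cref{Mellin Transform for biWhittaker Sheaves}, using the commutative diagram in that theorem together with the conservativity statements already established. Recall from \cref{Representation Theoretic Notation Section} and the Mellin transform conventions that $\FourierMukai$ is a symmetric monoidal, $W$-equivariant equivalence $\IndCoh(\LTd/\characterlatticeforT) \xrightarrow{\sim} \D(T)$; since right $\D$-modules are in play and $T$ is smooth of dimension $d$, the naive comparison of $t$-structures is off by the cohomological shift $d$, which is exactly the content we must verify.

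First I would set up the relevant diagram: by \cref{Mellin Transform for biWhittaker Sheaves}, the square relating $\IndCoh(\LTd\sslash\Wext)$, $\IndCoh(\LTd/\Wext)$, $\Hpsi$, and $\D(T)^W$ commutes, with horizontal arrows $\pi^!$ and $\AvNShiftedAndLifted$, and with $F'[d]$ being $t$-exact. The forgetful functor $\text{oblv}^W: \D(T)^W \to \D(T)$ reflects the $t$-structure (see \cref{Categorical Conventions Subsubsection} and the discussion of $t$-structures on $H$-categories), and by \cref{New Various Conditions for Wext Equivariant Sheaf to Satisfy Coxteter Descent} the composite $\text{oblv}^W \circ \AvNShiftedAndLifted$ is, up to shift, the averaging functor $\AvN$ landing in $\D(T)$, which is $t$-exact by \cref{Ginzburgt-Exactness}. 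The strategy is then: the equivalence $\FourierMukai[d]: \IndCoh(\LTd/\characterlatticeforT) \to \D(T)$ fits into a $W$-equivariant square with $\pi^!_{\text{quot}}: \IndCoh(\LTd/\Wext) \to \IndCoh(\LTd/\characterlatticeforT)$ the pullback along the quotient map, and this pullback is $t$-exact by the analogue of \cref{Base Change and t-Exactness for Coarse Quotient Functors} for $\LTd/\Wext$. Taking $W$-invariants and using that $(-)^W$ preserves and reflects $t$-structures (forgetful functor $t$-exact and conservative), it suffices to check $t$-exactness of $\FourierMukai[d]$ after composing with a conservative, $t$-exact functor out of $\D(T)$, or equivalently to check it on $\QCoh(\LTd/\characterlatticeforT)^{\heartsuit}$ via the self-dual/ $\Upsilon$ comparison of $\QCoh$ and $\IndCoh$ on the smooth stack $\LTd/\characterlatticeforT$.

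Concretely, the cleanest route is: (i) reduce to the non-equivariant statement, that $\FourierMukai[d]: \IndCoh(\LTd/\characterlatticeforT) \to \D(T)$ is $t$-exact — since $(-)^W$ reflects $t$-structures, $W$-equivariance of $\FourierMukai$ then gives the equivariant version for free; (ii) observe that $\IndCoh(\LTd/\characterlatticeforT) \xrightarrow{\Upsilon} \text{(same)}$ and the projection $\IndCoh(\LTd/\characterlatticeforT) \to \QCoh(\LTd/\characterlatticeforT)$ are $t$-exact (smoothness), reducing to the statement for $\QCoh(\LTd/\characterlatticeforT) \xrightarrow{\sim} \D^{\ell}(T)$, i.e. the left $\D$-module version recorded in the excerpt just before \cref{Mellin Transform for biWhittaker Sheaves}; (iii) for the left $\D$-module Mellin transform $\QCoh(\LTd/\characterlatticeforT) \xrightarrow{\sim} \D^{\ell}(T)$, this is $t$-exact without shift because $T \cong \G_m^d$ and under the Fourier–Mellin identification $\QCoh(\LTd) \simeq \Symt\text{-mod}$ the heart corresponds to the heart of $\D^{\ell}(T)^{\heartsuit}$; the passage from left to right $\D$-modules on the $d$-dimensional group $T$ introduces precisely the shift by $d = \dim T$, so $\FourierMukai[d]$ is $t$-exact for right $\D$-modules. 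Alternatively, and perhaps more in the spirit of the paper, one can deduce (iii) directly from \cref{Mellin Transform for biWhittaker Sheaves}: since $\AvNShiftedAndLifted$ is $t$-exact (proved via \cref{Ginzburgt-Exactness}), $\pi^!$ is $t$-exact, $F'[d]$ is $t$-exact, and $\text{oblv}^W$ reflects $t$-structures, the commutativity of the square forces $\FourierMukai[d]$ to be $t$-exact on the essential image of $\pi^!$; one then extends to all of $\D(T)^W$, and hence to $\D(T)$, using that $\pi^!$ generates under colimits and both $\FourierMukai[d]$ and its inverse commute with colimits.

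The main obstacle I anticipate is pinning down the exact shift: one must be careful about the right- versus left-$\D$-module conventions, the self-duality comparison $\Upsilon_{\LTd}: \QCoh(\LTd) \xrightarrow{\sim} \IndCoh(\LTd)$ (which is $t$-exact, $\LTd$ being smooth, so it does not contribute a shift), and the dualizing-complex normalization implicit in the identification $\D(X) = \IndCoh(X_{dR})$, so that the single shift by $d$ is accounted for exactly once rather than zero or two times. Once the bookkeeping of shifts is settled, the argument is formal: every ingredient — $t$-exactness of $\pi^!$ and of $\AvNShiftedAndLifted$, the fact that $\text{oblv}^W$ and $\Upsilon$ reflect/preserve $t$-structures, and the colimit-generation of $\D(T)$ by $\pi^!$ of structure sheaves — has already been recorded above.
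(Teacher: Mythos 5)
Your overall instinct---conjugate $\FourierMukai$ by functors whose $t$-exactness is already known and push the problem onto an explicit identification---is in the spirit of the paper's proof, but both of your concrete routes have gaps, and the gap sits exactly where the content of the proposition lies. Your second route is circular in the paper's logical order: the $t$-exactness of $F'[d]$ asserted in \cref{Mellin Transform for biWhittaker Sheaves} is itself deduced, in \cref{Reduction to Fixed Central Character}, from the $t$-exactness of $\AvNShiftedAndLifted$, of the coarse-quotient pullback, \emph{and of $\FourierMukai[d]$}---the present proposition is listed there precisely as the remaining ingredient. Even granting $F'[d]$, the step ``extend from the essential image of $\pi^!$ to all of $\D(T)^W$ by generation under colimits'' does not work: that essential image is the proper subcategory of sheaves descending to the coarse quotient, it does not generate $\D(T)^W$ under colimits, and $t$-exactness (in particular left $t$-exactness) is not inherited from a colimit-generating subcategory.

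Your first route is closer to a genuine proof, but the shift bookkeeping---which you yourself flag as the main obstacle---\emph{is} the assertion being proved, and the partial accounting you supply contains an error: $\Upsilon_{\LTd} = -\otimes\,\omega_{\LTd}$ is not $t$-exact on the smooth $d$-dimensional $\LTd$; since $\Psi(\omega_{\LTd}) \simeq \Omega^d[d]$ sits in cohomological degree $-d$, the functor $\Upsilon$ shifts the $t$-structure by $d$. So your claim that the entire shift comes from the left/right $\D$-module dictionary while $\Upsilon$ contributes nothing misplaces the $d$, and the assertion that the left-module Mellin transform identifies hearts is stated rather than proved. The paper resolves all of this with one concrete input you do not invoke: it conjugates $\FourierMukai$ by the conservative, $t$-exact, $t$-structure-reflecting functor $\Gamma^{\operatorname{IndCoh}}\phi^!: \D(T) \to \text{Vect}$ (forget from $\IndCoh(T_{dR})$ to $\IndCoh(T)$ and take global sections on the affine $T$), and then uses Laumon's formula for the Fourier transform of a pushforward of vector bundles to identify the resulting composite out of $\IndCoh(\LTd/\characterlatticeforT)$ as global sections of the underlying sheaf on $\LTd$ shifted by exactly $d$. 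That explicit identification is what pins the shift down once rather than zero or two times; without it, or some equivalent computation, the proposal does not close.
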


\begin{proof}
As we mention above, this follows directly from our definition of the Mellin transform. The Mellin transform also admits a Fourier-Mukai description and we include an alternate proof using this definition. It is standard that the functors
\raggedbottom
\[\IndCoh(T_{dR}) \xrightarrow{\phi^!} \IndCoh(T) \xrightarrow{\Gamma^{\text{IndCoh}}} \text{Vect}\] 

\noindent correspond, under the associated Fourier-Mukai transformations, to the functors
\raggedbottom
\[\IndCoh(\LTd/\characterlatticeforT) \xrightarrow{\Gamma^{\text{IndCoh}}[-d]/\characterlatticeforT} \IndCoh(\ast/\characterlatticeforT) \xrightarrow{c^!} \text{Vect}\]

\noindent where $c: \ast \to \ast/\characterlatticeforT$ is the quotient map, see the proof of \cite[Th\'eor\`{e}me 6.3.3(ii)]{LaumonTransformationDeFourierGeneralisse}, whose proof also applies to IndCoh in the DG categorical context. 

Let $F$ denote the composite $\Gamma^{\text{IndCoh}}\phi^!$ and let $G$ denote the composite $\Gamma^{\text{IndCoh}}[-d]/\characterlatticeforT \circ c^!$. Then by this observation we see that there is a canonical identification exhibiting that the following diagram commutes:

\begin{equation*}
  \xymatrix@R+2em@C+2em{
   \IndCoh(\LTd/\characterlatticeforT)  \ar[r]^{\textcolor{white!100}{space}G} \ar[d]^{\FourierMukai} & \text{Vect} \ar[d]^{\text{id}} \\
   \D(T) := \IndCoh(T_{dR})\ar[r]^{\textcolor{white!100}{space}F} & \text{Vect}
  }
 \end{equation*}
 
 \noindent since the Fourier-Mukai transform for the trivial group is the identity. Because the functors $F$, $G[-d]$, and $\text{id}$ are $t$-exact, we see that $\FourierMukai[d]$ is $t$-exact as well. \hfill \hspace{3in} \qedsymbol 
\end{proof}

\section{Proof of \cref{Main Monoidal Equivalences}}\label{ProofOfMainTheorem}
\newcommand{\FI}{F_{I}}
\newcommand{\FD}{F_{\mathcal{D}}}
\newcommand{\LI}{t_*^{\IndCoh}}
\newcommand{\LD}{\mathsf{A}^{\psi}_!}
\newcommand{\LIenh}{s_*^{\IndCoh, \text{enh}}}
\newcommand{\LDenh}{\text{Av}_!^{\psi, \text{enh}}}
In this section, we prove \cref{Main Monoidal Equivalences}. We will state a variant of \cref{Main Monoidal Equivalences}, \cref{NewBigTheorem}, in \cref{BigEquivAsCats}. In the remaining subsections, we discuss the proofs of \cref{NewBigTheorem} and \cref{Main Monoidal Equivalences}. 

\begin{Remark}\label{Explanatory Remark}
We highlight two (related) technical issues which arise in the proof of \cref{Main Monoidal Equivalences} which are already visible when $\LTd\sslash\Wext$ is replaced with the affine scheme $\LTd\sslash W$. In this case, an elementary computation (performed in for example \cite[Proposition A.2]{GannonDescentToTheCoarseQuotientForPseudoreflectionAndAffineWeylGroups}) identifies $\LTd \times_{\LTd\sslash W} \LTd$ with the union of graphs of $W$ acting on $\LTd$; in particular, $\LTd \times_{\LTd\sslash W} \LTd$ is a scheme rather than an ind-scheme.

\begin{enumerate}
    \item We observe that the functor \begin{equation*}\label{Source Map Pushforward}s_*^{\IndCoh}: \IndCoh(\LTd \times_{\LTd\sslash \Wext} \LTd) \to \LTd\end{equation*} fails to be conservative, even if $\Wext = \Waff$ so that $\LTd \times_{\LTd\sslash \Waff} \LTd$ is an ind-closed subscheme of $\LTd \times \LTd$. This fact implies that one cannot, for example, directly apply the formalism of Barr-Beck-Lurie to understand the entirety of the domain of $s_*^{\IndCoh}$ from comodules for the natural comonad acting on $\IndCoh(\LTd)$.
    
    In fact, we claim that the IndCoh pushforward functor \[\mathring{s}_*^{\IndCoh}: \IndCoh(\LTd \times_{\LTd\sslash W} \LTd) \to \LTd\] to one of the factors often fails to be conservative in the more elementary case where the affine Weyl group is replaced with the finite Weyl group. (By the functoriality of IndCoh pushforward, this fact implies that $s_*^{\IndCoh}$ is not conservative in general.) Indeed, letting $\Gamma := \LTd \times_{\LTd\sslash W} \LTd$, we see that if $\mathring{s}_*^{\IndCoh}$ were conservative, then the functor \[\Psi_{\Gamma}: \IndCoh(\Gamma) \to \QCoh(\Gamma)\] obtained by ind-extension of the inclusion $\mathrm{Coh}(\Gamma) \subseteq \QCoh(\Gamma)$ is conservative as well, since the pushforward functor $\mathring{s}_*$ on quasicoherent sheaves is conservative, there is an isomorphism $\Psi_{\LTd}s_*^{\IndCoh} \simeq s_*\Psi_{\Gamma}$ by \cite[Proposition 3.1.1]{GaiIndCoh}, and the functor $\Psi_{\LTd}$ is an equivalence by \cite[Lemma 1.1.6]{GaiIndCoh}. However, the functor $\Psi_{\Gamma}$ cannot be conservative as it is adjoint to a fully faithful functor $\Xi_{\Gamma}$ (see \cite[Proposition 1.5.3]{GaiIndCoh}) and so the conservativity of $\Psi_{\Gamma}$ would imply that is an equivalence, but by \cite[Proposition 1.6.4]{GaiIndCoh} this would imply that $\Gamma$ is smooth; this violates the fact that $\Gamma$ is singular if $G$ is not a torus.
    
     On the other hand, the functor $\Psi_{\Gamma}$ is an equivalence when restricted to the eventually coconnective subcategory $\IndCoh(\Gamma)^+$, see \cite[Proposition 1.2.4]{GaiIndCoh}. Using this (or easier variants of our arguments below) one can show that the restriction of $\mathring{s}_*^{\IndCoh}$ to the eventually coconnective subcategory is comonadic. Our strategy in proving \cref{Main Monoidal Equivalences} can be informally summarized as arguing an analogue of this comonadicity on the eventually coconnective subcategories of the categories appearing in \cref{Main Monoidal Equivalences} and then \lq bootstrapping\rq{} from our knowledge of the eventually coconnective subcategory to obtain the entirety of \cref{Main Monoidal Equivalences}.

     \item The fact that the quotient map $\LTd \to \LTd\sslash W$ is faithfully flat implies, by \cite[Theorem 1.3]{BenZviFrancisNadlerMoritaEquivalenceforConvolutionCategories}, that there is a Morita equivalence \[\textcolor{white}{white}\QCoh(\LTd\sslash W)\mathrm{-modcat} \xrightarrow{\sim} \QCoh(\LTd \times_{\LTd\sslash W} \LTd)\mathrm{-modcat}\] given by tensoring with the bimodule $\QCoh(\LTd)$. Moreover, using the fact that we have a symmetric monoidal equivalence \[\Upsilon_{S}: \QCoh(S) \xrightarrow{\sim} \IndCoh(S)\] for any smooth classical scheme $S$ given by tensoring with the dualizing complex (see for example \cite[Corollary 5.7.4]{GaiIndCoh}) we therefore obtain that the category $\IndCoh(\LTd\sslash W)$ and the convolution category $\QCoh(\LTd \times_{\LTd\sslash W} \LTd)$ are Morita equivalent. However, one can show that the analogous functor \[\textcolor{white}{white}\IndCoh(\LTd\sslash W)\mathrm{-modcat} \xhookrightarrow{} \IndCoh(\LTd \times_{\LTd\sslash W} \LTd)\mathrm{-modcat}\] obtained by tensoring with $\QCoh(\LTd) \xrightarrow{\sim} \IndCoh(\LTd)$ is fully faithful but is \textit{not} an equivalence.

     Informally, we view the relationship $\Hpsiliteral$-categories (which are equivalently $G$-categories generated by their Whittaker invariants) and nondegenerate $G$-categories as analogous to the relationship between module categories for the category $\IndCoh(\LTd\sslash W)$ and for the convolution monoidal category $\IndCoh(\LTd \times_{\LTd\sslash W} \LTd)$. The monoidal equivalence $F'$ in \cref{Mellin Transform for biWhittaker Sheaves} in particular implies that there is an equivalence of DG categories of $\IndCoh(\LTd\sslash\Wext)$-module categories and $\Hpsiliteral$-categories; however, neither $\Hpsiliteral$ nor $\IndCoh(\LTd\sslash\Wext)$ are Morita equivalent to the category $\IndCoh(\LTd \times_{\LTd\sslash \Wext} \LTd)$ of ind-coherent sheaves\footnote{As one can show that the natural (right) adjoint to the pullback functor $s^*: \QCoh(\LTd) \xrightarrow{} \QCoh(\LTd \times_{\LTd\sslash \Wext} \LTd)$ is not continuous in general, in contrast with the natural left adjoint to the pullback $s^!$, we do not work with the category $\QCoh(\LTd \times_{\LTd\sslash \Wext} \LTd)$ in what follows.} on the ind-scheme $\LTd \times_{\LTd\sslash \Wext} \LTd$. In fact, we claim (but do not prove here) that one can explicitly construct a nonzero a DG category $\C$ which is nondegenerate but has the property that $\C^{N, \psi} \simeq 0$. 
     
     We also expect that the functor $\Upsilon_{\LTd\sslash \Wext}$ provides a symmetric monoidal equivalence \[\QCoh(\LTd\sslash\Wext) \xrightarrow{\sim} \IndCoh(\LTd\sslash\Wext)\] and that there is a fully faithful functor \[\textcolor{white}{whitewhi}\IndCoh(\LTd\sslash \Wext)\mathrm{-modcat} \xhookrightarrow{} \IndCoh(\LTd \times_{\LTd\sslash\Wext} \LTd)\mathrm{-modcat} \] given by tensoring with $\IndCoh(\LTd)$, but we neither use nor prove these expectations in what follows. 
\end{enumerate}
\end{Remark}

\subsection{\cref{Main Monoidal Equivalences} via Endomorphism Categories}\label{BigEquivAsCats}
After recalling the construction of the evaluation functor in \cref{Evaluation Functor Subsubsection}, in \cref{Diagrams from Convolution Formalism Subsubsection} we will state a variant of \cref{Main Monoidal Equivalences}, \cref{NewBigTheorem}.

\subsubsection{Evaluation Functor}\label{Evaluation Functor Subsubsection}If $\mathbf{A}$ is a monoidal DG category and $\C, \D$ are module DG categories, the definition of internal Hom gives a continuous functor $E: \uHom_{\mathbf{A}}(\C, \D) \otimes_{\mathrm{DGCat}} \C \to \D$; specifically, it is the adjoint to the identity functor in $\End(\uHom_{\mathbf{A}}(\C, \D))$. Given a functor $F: \mathrm{Vect} \to \C$, we define the functor $E_{F(k)}$ as the composite \[\uEnd_{\mathbf{A}}(\C, \D) \simeq \uEnd_{\mathbf{A}}(\C, \D) \otimes_{\mathrm{DGCat}} \mathrm{Vect} \xrightarrow{\mathrm{id} \otimes F} \uEnd_{\mathbf{A}}(\C, \D) \otimes_{\mathrm{DGCat}} \C \xrightarrow{E} \D\] and refer to it as the \textit{evaluation functor} at $F(k)$. 

Observe that, if we are given a functor $F: \mathrm{Vect} \to \C$ as above, the $\mathbf{A}$-module structure upgrades this to a map $\tilde{F}: \mathbf{A} \to \C$, informally given by the formula $\tilde{F}(A) := A \star F(k)$, and that we may equivalently write the evaluation functor at $F(k)$ as the composite \[\uHom_{\mathbf{A}}(\C, \D) \xrightarrow{- \circ \tilde{F}} \uHom_{\mathbf{A}}(\mathbf{A}, \D) \xrightarrow{E_{\mathbf{1}_{\mathbf{A}}}} \D\] where $\mathbf{1}_{\mathbf{A}}$ is the monoidal unit, and that $E_{\mathbf{1}_{\mathbf{A}}}$. It is not difficult to verify:

\begin{Proposition}\label{Adjoints of Pulling Back by Adjoints and Evaluation at Monoidal Unit is Equivalence}With the notation as above, we have: \begin{enumerate}
    \item If $\tilde{F}$ admits a left adjoint $\tilde{F}^L$, respectively right adjoint $\tilde{F}^R$, then $\mathcal{P}_{\tilde{F}} := - \circ \tilde{F}$ admits the right adjoint $\mathcal{P}_{\tilde{F}^L} := - \circ \tilde{F}^L$, respectively the left adjoint $\mathcal{P}_{\tilde{F}^R} := - \circ \tilde{F}^R$.
    \item The functor $E_{\mathbf{1}_{\mathbf{A}}}$ is an equivalence of categories.
\end{enumerate}
\end{Proposition}

\begin{proof}
We prove the first claim in the case that $\tilde{F}$ admits a left adjoint; the claim where $\tilde{F}$ admits a right adjoint is completely symmetric. In this case, the unit map for this adjunction for $F \in \uHom_{\mathbf{A}}(\C, \D)$ is given by applying $F$ to the unit map for the adjunction $(\tilde{F}^L, \tilde{F})$. The second claim follows immediately from the Yoneda lemma and the definition of the internal Hom.
\end{proof}
\subsubsection{Diagrams from Convolution Formalism}\label{Diagrams from Convolution Formalism Subsubsection}
We let \begin{equation}\label{LI Definition}\LI: \IndCohx \to \IndCoh(\LTd)\end{equation} be the pushforward associated to the projection map and denote by \begin{equation}\label{LD Definition}\LD: \CatTwTw \to \IndCoh(\LTd)\end{equation} the composite of the left adjoint to the $t$-exact functor $\AvNshifted$, the equivalence $\jmath^*$ of \cref{SupportOfWhittakerSheaves} on the weak $T$-invariants, and the Mellin transform. 

By construction of the evaluation functors and the convolution formalism, the diagram
\begin{equation*}
\xymatrix@R+0em@C+0em{
I(\LTd \times_{\LTd\sslash\Wext} \LTd) \ar[r]^{\mathrm{act}_I\textcolor{white}{w}} \ar[dr]^{t_*^{\IndCoh}} & \textcolor{white}{w}\uEnd_{I(\LTd\sslash\Wext)}(I(\LTd)) \ar[d]^{E_{\omega_{\LTd}}} & \uEnd_{\Hpsiliteral}(\D(N^-_{\psi}\backslash G/N)^{T, w})  \ar[d]^{E_{\delta}} \ar[l]_{\sim}& \ar[l]_{\mathrm{act}_{\D}} \CatTwTw \ar[dl]^{\LD} \\
& I(\LTd) & \D(N^-_{\psi}\backslash G/N)^{T, w} \ar[l]_{\sim} & 
}
 \end{equation*}
commutes, where for notational shorthand we use the symbol $I$ to denote $\IndCoh$, the functors \[\mathrm{act}_I(\F) := \F \otimes^!_{\Symt} (-)\text{ and } \mathrm{act}_{\D}(\mathcal{G}) := \mathcal{G} \star (-)\] are given by the convolution formalism, and each of the horizontal functors appearing in the top row of this diagram are monoidal. We let $\mathrm{act}_I^R$ and $\mathrm{act}_{\D}^R$ denote the \textit{not necessarily continuous} right adjoint to the respective action functors. The counit maps induce natural transformations \begin{equation}\label{IndCoh Counit Comonad Comparison}t_*^{\IndCoh}t^!  \simeq E_{\omega_{\LTd}}\mathrm{act}_I\mathrm{act}_I^RE_{\omega_{\LTd}}^R \xrightarrow{} E_{\omega_{\LTd}}E_{\omega_{\LTd}}^R\end{equation} and \begin{equation}\label{DMod Counit Comonad Comparison}\LD\mathsf{A}^{\psi, R}_! \simeq E_{\delta}\mathrm{act}_{\D}\mathrm{act}_{\D}^RE_{\delta}^R \xrightarrow{} E_{\delta}E_{\delta}^R\end{equation} of comonads acting on their respective domains. The proof of \cref{Main Monoidal Equivalences} will largely reduce to the proof of the following theorem:
 

\begin{Theorem}\label{NewBigTheorem}\noindent We have the following: \begin{enumerate}
\item The functor $\LI: \IndCohx^{+} \to \IndCoh(\LTd)^+$ is comonadic. 
\item The functor $\LD: \CatTwTwplus \to \IndCoh(\LTd)^{+}$ is comonadic. 
\item The functors $E^{\mathrm{enh}}_{\omega_{\LTd}}$ and $E_{\delta}^{\mathrm{enh}}$ are fully faithful.
\item The natural transformations \labelcref{IndCoh Counit Comonad Comparison} and \labelcref{DMod Counit Comonad Comparison} are equivalences.
\item The functors $\mathrm{act}_I$ and $\mathrm{act}_{\D}$ induce equivalences of categories when restricted to the respective not necessarily cocomplete subcategories of eventually coconnective objects.
\end{enumerate}
\end{Theorem}

We will discuss the proofs of points (1), (2), (3), (4), and (5) of \cref{NewBigTheorem} in \cref{Proof of Comonadicity for IndCoh}, \cref{Comonadicity for D Modules}, \cref{Comonadicity of Evaluation Functor Subsection}, \cref{Identification of Comonads Subsection}, and \cref{Proof of NewBigTheorem 5 and Main Monoidal Equivalences}, respectively. 

\subsection{Proof of Comonadicity of $t_*^{\IndCoh}$}\label{Proof of Comonadicity for IndCoh} We now prove point (1) of \cref{NewBigTheorem}. To do this, we appeal to \cref{ComonadicityCorollary}. Since, by construction (see \cref{Base Change and t-Exactness for Coarse Quotient Functors}), $t_*^{\IndCoh}$ admits a continuous right adjoint $t^!$, it therefore suffices to show that $t_*^{\IndCoh}$ is $t$-exact, that the $t$-structure on $\IndCohx$ is right-complete, and that $t_*^{\IndCoh}$ is conservative on $\IndCohx^{\heartsuit}$. We have already recalled in \cref{Base Change and t-Exactness for Coarse Quotient Functors} that $t_*^{\IndCoh}$ is $t$-exact. We prove that the $t$-structure on $\IndCohx$ is right-complete in \cref{Right Completeness of tStructure on IndCoherent Sheaves on Indscheme}, and show that $t_*^{\IndCoh}$ is conservative on $\IndCohx^{\heartsuit}$ in \cref{Conseravtivity of tstarIndCoh on Heart}.

\subsubsection{Right-Completeness of t-Structure of Ind-Coherent Sheaves on Ind-Scheme}\label{Right Completeness of tStructure on IndCoherent Sheaves on Indscheme}
\newcommand{\SymV}{\text{Sym}(V)}
Let $\mathcal{X}$ denote any ind-scheme. Recall the standard $t$-structure on $\IndCoh(\mathcal{X})$ characterized by the property that $\IndCoh(\mathcal{X})^{\geq 0}$ contains precisely those $\mathcal{F} \in \IndCoh(\mathcal{X})$ for which $i^!(\F) \in \IndCoh(X)^{\geq 0}$ for any closed subscheme $X \xhookrightarrow{i} \mathcal{X}$, and which is compatible with filtered colimits \cite[Chapter 4, Section 1.2]{GaRoI}. 

\begin{Proposition}\label{t-Structure on Ind-Scheme is Right-Complete}
The $t$-structure on $\IndCoh(\mathcal{X})$ is right-complete. 
\end{Proposition}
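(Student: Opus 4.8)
The plan is to reduce the right-completeness of the $t$-structure on $\IndCoh(\mathcal{X})$ to the known right-completeness of the $t$-structure on $\IndCoh(X)$ for $X$ an (eventually coconnective, almost of finite type) scheme, using the presentation of $\mathcal{X}$ as a filtered colimit of closed subschemes. Write $\mathcal{X} \simeq \operatorname{colim}_{\alpha} X_{\alpha}$ along closed embeddings $i_{\alpha\beta}: X_{\alpha} \hookrightarrow X_{\beta}$, so that $\IndCoh(\mathcal{X}) \simeq \operatorname{colim}_{\alpha} \IndCoh(X_{\alpha})$, where the colimit is taken in $\DGCatContk$ along the pushforward functors $i_{\alpha\beta, *}^{\IndCoh}$, which are $t$-exact (being pushforwards along closed embeddings). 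Equivalently, $\IndCoh(\mathcal{X}) \simeq \lim_{\alpha} \IndCoh(X_{\alpha})$ along the $!$-pullback functors $i_{\alpha\beta}^!$, which are left $t$-exact and whose colimit-adjoints recover the $t$-structure; by \cite[Chapter 4, Section 1.2]{GaRoI} the $t$-structure on $\IndCoh(\mathcal{X})$ is characterized by $\F \in \IndCoh(\mathcal{X})^{\geq 0}$ iff $i_\alpha^!(\F) \in \IndCoh(X_\alpha)^{\geq 0}$ for all $\alpha$, where $i_\alpha : X_\alpha \hookrightarrow \mathcal{X}$, and it is compatible with filtered colimits.

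First I would record that it suffices to check, for a given $\F \in \IndCoh(\mathcal{X})$, that the canonical map $\phi^{\F}: \operatorname{colim}_n \tau^{\leq n}\F \to \F$ is an equivalence, and that equivalences in $\IndCoh(\mathcal{X})$ can be detected after applying all the $i_\alpha^!$ (since the $i_{\alpha, *}^{\IndCoh}$ jointly generate, equivalently the $i_\alpha^!$ are jointly conservative). Then I would apply $i_\alpha^!$ to $\phi^{\F}$. Since $i_\alpha^!$ is a continuous functor it commutes with the filtered colimit $\operatorname{colim}_n$. The key point is that $i_\alpha^!$ need not be $t$-exact, only left $t$-exact; however, left $t$-exactness is exactly what is needed: we get $i_\alpha^!(\tau^{\leq n}\F)$ has no cohomology above degree $n$ is \emph{not} automatic, but what \emph{is} automatic is that $\tau^{\leq n}i_\alpha^!(\phi^\F)$ can be analyzed. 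A cleaner route: by left $t$-exactness of $i_\alpha^!$, the fiber $\mathcal{K}_n$ of $\tau^{\leq n}\F \to \F$ lies in $\IndCoh(\mathcal{X})^{\geq n+1}$, hence $i_\alpha^!(\mathcal{K}_n) \in \IndCoh(X_\alpha)^{\geq n+1}$. Therefore $\operatorname{colim}_n i_\alpha^!(\mathcal{K}_n)$ is a filtered colimit of objects whose connective cohomological amplitude increases without bound; since the $t$-structure on $\IndCoh(X_\alpha)$ is compatible with filtered colimits (it is a scheme case, hence the standard $t$-structure), $\operatorname{colim}_n i_\alpha^!(\mathcal{K}_n) \in \bigcap_m \IndCoh(X_\alpha)^{\geq m} \simeq 0$, using that the $t$-structure on $\IndCoh(X_\alpha)$ is right-complete, hence separated. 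Thus $i_\alpha^!(\phi^\F)$, which is identified with $\operatorname{colim}_n i_\alpha^!(\tau^{\leq n}\F) \to i_\alpha^!(\F)$ and whose fiber is $\operatorname{colim}_n i_\alpha^!(\mathcal{K}_n) \simeq 0$, is an equivalence for every $\alpha$, so $\phi^\F$ is an equivalence.

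The main obstacle I anticipate is bookkeeping around the fact that $i_\alpha^!$ is not $t$-exact, which means one cannot directly transport the cofiber sequence $\tau^{\leq n}\F \to \F \to \tau^{\geq n+1}\F$ through $i_\alpha^!$ and read off truncations; the argument must instead go through the left $t$-exactness (equivalently: $i_\alpha^!$ preserves coconnectivity, up to the shift built into its definition for non-smooth closed embeddings — here I should be careful, since for closed embeddings into ind-schemes $i^!$ is genuinely left $t$-exact with no shift by the conventions of \cite[Chapter 4]{GaRoI}) together with the separatedness and filtered-colimit-compatibility of the scheme-level $t$-structure. A secondary point to handle cleanly is the reduction to the scheme case itself: one must invoke that $\IndCoh(X)$ for $X$ a scheme almost of finite type is right-complete, which follows since $\IndCoh(X)$ carries a conservative $t$-exact functor (e.g.\ $\Psi_X$ to $\QCoh(X)$, or working affine-locally) to a category whose $t$-structure is right-complete, together with \cref{Conservative t Exact Functor to right-complete Implies right-complete and Dual Statement}(1); alternatively this is recorded directly in \cite[Chapter 4]{GaRoI}. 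Once these are in place the argument is essentially the one already used in the proof of \cref{t-Structure on Quotient of Right-Complete Category is Right-Complete}, transported along the conservative family $\{i_\alpha^!\}$ rather than a single conservative functor.
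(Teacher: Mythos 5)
Your argument is correct and is essentially the paper's proof: both detect the equivalence of $\phi^{\F}$ via the jointly conservative family $\{i_\alpha^!\}$, use left $t$-exactness of $i^!$ along closed embeddings to see that the fiber becomes infinitely coconnective after restriction to each $X_\alpha$, and conclude from the scheme-level right-completeness/separatedness, which is reduced to $\QCoh(X_\alpha)$ via $\Psi_{X_\alpha}$. The one point to phrase carefully is that $\Psi_{X_\alpha}$ is conservative only on $\IndCoh(X_\alpha)^{\geq 0}$ (its kernel measures the failure of left-completeness of $\IndCoh$), but since the object you feed it lies in $\bigcap_m \IndCoh(X_\alpha)^{\geq m}$ this restricted conservativity suffices, exactly as in the paper.
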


\begin{proof}
Given some $\F \in \IndCoh(\mathcal{X})$, let $\phi^{\mathcal{F}}: \F \to \text{colim}_n \tau^{\leq n}\F$ denote the canonical map, and let $\mathcal{K}$ denote the fiber of $\phi^{\mathcal{F}}$. Since the $t$-structure on $\IndCoh(\mathcal{X})$ is compatible with filtered colimits, we see that $\tau^{\leq n_0}(\phi^{\mathcal{F}})$ is an equivalence for all $n_0 \in \mathbb{Z}$, and that, in particular, $\mathcal{K} \in \IndCoh(\mathcal{X})^{\geq 0}$.

We first prove this in the case where $\mathcal{X}$ is itself a scheme $X$. In this case, we have a $t$-exact equivalence $\Psi_{X}: \IndCoh(X)^{\geq 0} \xrightarrow{\sim} \QCoh(X)^{\geq 0}$. Because $\Psi_X$ is exact, we see that $\Psi_X(\mathcal{K}) \simeq \text{fib}(\Psi_X(\phi^{\mathcal{F}}))$. Because $\Psi$ is continuous and $t$-exact, we obtain a canonical identification $\Psi_X(\phi^{\mathcal{F}}) \simeq \phi^{\Psi_X(\mathcal{F})}$. However, $\phi^{\Psi_X(\mathcal{F})}$ is an equivalence since the $t$-structure on $\QCoh(X)$ is right-complete \cite[Chapter 3, Corollary 1.5.7]{GaRoI}. Thus $\Psi_X(\mathcal{K}) \simeq 0$, and since $\Psi_X$ is in particular conservative on $\IndCoh(X)^{\geq 0}$, we see that $\mathcal{K} \simeq 0$ in this case.

We now assume the result of \cref{t-Structure on Ind-Scheme is Right-Complete} for schemes. For any closed subscheme $i: X \xhookrightarrow{} \mathcal{X}$, we therefore obtain equivalences
\raggedbottom
\[i^!(\mathcal{K}) \xleftarrow{\phi^{i^{!}(\F)}} \text{colim}_n\tau^{\leq n}i^!(\mathcal{K}) \simeq \text{colim}_n\tau^{\leq n}i^!(\tau^{\leq n}\mathcal{K}) \simeq \text{colim}_n\tau^{\leq n}i^!(0)\]

\noindent where the first map is an equivalence $t$-structure on $\IndCoh(X)$ is right-complete, the second step uses the fact that $i^!$ is right t-exact, and the third equivalence is a direct consequence of the above fact that $\tau^{\leq n}(\phi^{\mathcal{F}})$ is an equivalence for all $n$. 
\end{proof}

\newcommand{\AvNTwr}{\mathsf{A}_{(T_{r}, w)}}
\newcommand{\AvNTwl}{\mathsf{A}_{(T_{\ell}, w)}}
\newcommand{\Avpsil}{\LD}

\subsubsection{Conservativity of $t_*^{\IndCoh}$ on Heart}\label{Conseravtivity of tstarIndCoh on Heart}We now prove that $t_*^{\IndCoh}$ is conservative. Since $t_*^{\IndCoh}$ is in particular exact, it suffices to prove that $t_*^{\IndCoh}(\F)$ is nonzero if $\F \in \IndCoh(\LTd \times_{\LTd\sslash\Wext} \LTd)^+$ is nonzero. By the right-completeness of the $t$-structure on $\IndCoh(\LTd \times_{\LTd\sslash\Wext} \LTd)$, there exists some $i$ such that $H^i(\F)$ is nonzero. Since $t_*^{\IndCoh}$ is $t$-exact by \cref{Base Change and t-Exactness for Coarse Quotient Functors}, it suffices to verify that $t_*^{\IndCoh}$ is conservative when restricted to $\IndCoh(\LTd \times_{\LTd\sslash\Wext} \LTd)^{\heartsuit}$. More generally, we have the following Proposition:

\begin{Proposition}\label{Pushforward from Ind-Affine Scheme is Conservative on Heart}
Assume $q: \indsch \to Y$ is a map from an ind-affine scheme $\indsch$ to an affine scheme $Y$. Then $q_*^{\IndCoh}$ is conservative on $\IndCoh(\indsch)^{\heartsuit}$.
\end{Proposition}

To prove \cref{Pushforward from Ind-Affine Scheme is Conservative on Heart}, we first prove the following:

\begin{Lemma}\label{CanFindSubschemeWithCohomology}
Let $\F \in \IndCoh(\indsch)^{\heartsuit}$. Then there exists some closed subscheme $X := X_{\alpha} \xhookrightarrow{i} \indsch$ such that $H^0(i^!(\F))$ is nonzero.  
\end{Lemma}

\begin{proof}
Pick a nonzero $\mathcal{F} \in \IndCoh(\indsch)^{\heartsuit}$. By \cite[Chapter 3, Corollary 1.2.7]{GaRoII}, there exists some closed subscheme $X_{\alpha}\xhookrightarrow{i} \indsch$ and some $\mathcal{G} \in \text{Coh}(X)^{\heartsuit}$ such that the map $i_{S, *}^{\IndCoh}(\mathcal{G}) \to \mathcal{F}$ is nonzero, so that the space $\Hom_{\IndCoh(\indsch)}(i_{S, *}^{\IndCoh}(\mathcal{G}), \F)$ is a discrete space (as both objects lie in the heart of a $t$-structure) with more than one point. Therefore, by adjunction, the same holds for $\Hom_{\IndCoh(X_{\alpha})}(\mathcal{G}, i^!\F)$. However, since $i_*^{\text{IndCoh}}$ is $t$-exact, its right adjoint is left $t$-exact, and so we see that this implies that there exists a nonzero map $\mathcal{G} \to \tau^{\leq 0}i^!\mathcal{F} \simeq H^0(i^!\mathcal{F})$ which obviously implies our claim.
\end{proof}

\begin{proof}[Proof of \cref{Pushforward from Ind-Affine Scheme is Conservative on Heart}]
Pick $\F \in \IndCoh(\indsch)^{\heartsuit}$. Note that, by \cref{CanFindSubschemeWithCohomology}, there exists some closed subscheme $i:X \xhookrightarrow{} \indsch$ for which $H^0(i^!(\F))$ is nonzero, and, by ind-affineness, we may assume $X$ is affine. Let $\pi$ denote the composite $q \circ i: X \to Y$.  Then we have that $H^0(\pi_*^{\IndCoh}i^!(\F)) \simeq H^0(q_*^{\IndCoh}i_{*}^{\IndCoh}i^!(\F))$ is a subobject of $H^0(q_*^{\IndCoh}(\F))$, as $q_{*}^{\IndCoh}$ is ind-affine and thus is $t$-exact by \cite[Chapter 3, Lemma 1.4.9]{GaRoII}. However, we see that $\pi_*^{\IndCoh}$ is conservative (it is the pushforward of an affine morphism) and so $H^0(\pi_*^{\IndCoh}i^!(\F))$ is nonzero, and therefore so too is $H^0(q_*^{\IndCoh}(\F))$.  
\end{proof}
We now record a consequence of \cref{Pushforward from Ind-Affine Scheme is Conservative on Heart} for later use.

\begin{Corollary}\label{Conervativity of Pushforward by QuotientMapforCoarseQuotient}
The functor \[\quotientmapforcoarsequotient_*^{\IndCoh}: \IndCoh(\LTd) \to \IndCoh(\LTd\sslash\Wext)\] is conservative.
\end{Corollary}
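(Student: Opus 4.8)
\textbf{Proof plan for \cref{Conervativity of Pushforward by QuotientMapforCoarseQuotient}.} The plan is to reduce the conservativity of $\quotientmapforcoarsequotient_*^{\IndCoh}: \IndCoh(\LTd) \to \IndCoh(\LTd\sslash\Wext)$ to the conservativity of $s_*^{\IndCoh}$ established (for the eventually coconnective subcategory) in \cref{IndCoh Pushforward Is Conservative}, by exploiting the Cartesian square of \cref{Base Change and t-Exactness for Coarse Quotient Functors}. Concretely, suppose $\F \in \IndCoh(\LTd)$ is such that $\quotientmapforcoarsequotient_*^{\IndCoh}(\F) \simeq 0$. I would apply $\quotientmapforcoarsequotient^!$ and use base change along the Cartesian diagram
\begin{equation*}
  \xymatrix@R+2em@C+2em{
   \Gamma_{\Wext} \ar[r]^{s} \ar[d]^{t}& \LTd \ar[d]^{\quotientmapforcoarsequotient} \\
  \LTd \ar[r]^{\quotientmapforcoarsequotient} & \LTd\sslash \Wext
  }
\end{equation*}
to obtain $t_*^{\IndCoh} s^!(\F) \simeq \quotientmapforcoarsequotient^! \quotientmapforcoarsequotient_*^{\IndCoh}(\F) \simeq 0$; here the base change theorem is available by \cref{Base Change and t-Exactness for Coarse Quotient Functors}. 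Then I would argue that, since $t$ is ind-affine (it is the source/target map of the groupoid $\Gamma_{\Wext} \simeq \LTd \times_{\LTd\sslash\Wext} \LTd$, and $\LTd\sslash\Wext$ is built so that $s,t$ are ind-affine, being base-changed from $\quotientmapforcoarsequotient$ which is affine onto its image), the functor $t_*^{\IndCoh}$ is conservative on the heart by \cref{Pushforward from Ind-Affine Scheme is Conservative on Heart}; combined with $t$-exactness of $t_*^{\IndCoh}$ (also from \cref{Base Change and t-Exactness for Coarse Quotient Functors}) this gives that $t_*^{\IndCoh}$ is conservative on all of $\IndCoh(\Gamma_{\Wext})$, using the right-completeness of the $t$-structure (\cref{t-Structure on Ind-Scheme is Right-Complete}) to upgrade conservativity on the heart to conservativity in general.

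Having deduced $s^!(\F) \simeq 0$, the remaining point is that $s^!$ is conservative. This follows because $s$ admits a section: the diagonal $\Delta: \LTd \to \Gamma_{\Wext} \simeq \LTd \times_{\LTd\sslash\Wext} \LTd$ (corresponding to the identity element of $\Wext$, i.e. the unit of the groupoid) satisfies $s \circ \Delta \simeq \id_{\LTd}$. Hence $\Delta^! s^!(\F) \simeq \F$, so $s^!(\F) \simeq 0$ forces $\F \simeq 0$, which is the desired conservativity. Alternatively, one can observe directly from the balanced-product description $\Gamma_{\Wext} \simeq \Wext \mathop{\times}\limits^{\Waff} \Gamma_{\Waff}$ and $\Gamma_{\Waff} = \bigcup_w \Gamma_w$ that $s$ restricted to the component $\Gamma_e$ (the graph of the identity) is an isomorphism onto $\LTd$, which gives the section just as well.

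I do not expect a genuine obstacle here: every ingredient—base change, $t$-exactness of the six functors attached to the square, ind-affineness of $s$ and $t$, and conservativity of ind-affine pushforward on the heart—has already been recorded in the excerpt, and the only mildly delicate point is passing from conservativity on $\IndCoh(-)^{\heartsuit}$ to conservativity on the whole DG category, which is handled by the standard argument: if $t_*^{\IndCoh}(\G) \simeq 0$ then, since $t_*^{\IndCoh}$ is $t$-exact, $t_*^{\IndCoh}(H^n(\G)) \simeq H^n(t_*^{\IndCoh}(\G)) \simeq 0$ for all $n$, so each $H^n(\G) \simeq 0$ by conservativity on the heart, and then $\G \simeq 0$ by right-completeness of the $t$-structure on $\IndCoh(\Gamma_{\Wext})$ together with the fact that a bounded-below object with vanishing cohomology is zero and right-completeness lets one reduce to the bounded-below case. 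The cleanest writeup will likely bypass $t_*^{\IndCoh}$ altogether and argue $s^!$ conservative via the section $\Delta$, then deduce conservativity of $\quotientmapforcoarsequotient_*^{\IndCoh}$ from the chain $\quotientmapforcoarsequotient_*^{\IndCoh}(\F) \simeq 0 \Rightarrow t_*^{\IndCoh}s^!(\F) \simeq 0$ only if one still needs $t_*^{\IndCoh}$ conservative; so in fact the shortest route is: $\quotientmapforcoarsequotient_*^{\IndCoh}(\F)\simeq 0 \Rightarrow \quotientmapforcoarsequotient^!\quotientmapforcoarsequotient_*^{\IndCoh}(\F) \simeq t_*^{\IndCoh}s^!(\F) \simeq 0$, apply conservativity of $t_*^{\IndCoh}$, then conservativity of $s^!$ via $\Delta$.
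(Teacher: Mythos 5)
Your skeleton is the same as the paper's: reduce via base change to the composite $t_*^{\IndCoh}s^!$, use the diagonal $\Delta$ (the unit of the groupoid, a section of $s$) to see that $s^!$ does not kill nonzero objects, and invoke \cref{Pushforward from Ind-Affine Scheme is Conservative on Heart} for the pushforward along the ind-affine map $t$. However, there is a genuine gap in the step where you upgrade $t_*^{\IndCoh}$ from conservative on the heart to conservative on all of $\IndCoh(\Gamma_{\Wext})$. Your proposed argument is: if $t_*^{\IndCoh}(\mathcal{G})\simeq 0$ then $t$-exactness gives $H^n(\mathcal{G})\simeq 0$ for all $n$, and then $\mathcal{G}\simeq 0$. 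That last implication fails here: $\Gamma_{\Wext}$ is a union of graphs and hence a \emph{singular} ind-scheme, so the $t$-structure on $\IndCoh(\Gamma_{\Wext})$ is not left-separated --- the kernel of $\Psi_{\Gamma_S}:\IndCoh(\Gamma_S)\to\QCoh(\Gamma_S)$ consists of nonzero objects lying in $\cap_n\IndCoh(\Gamma_S)^{\leq -n}$, all of whose cohomology groups vanish. (Right-completeness, which you invoke, only reduces you to the bounded-\emph{above} case via $\mathcal{G}\simeq\mathrm{colim}_n\,\tau^{\leq n}\mathcal{G}$, which is exactly the case where separatedness fails.) So "conservative on the heart plus $t$-exact" does not yield conservativity on unbounded objects of $\IndCoh(\Gamma_{\Wext})$, and your "shortest route" --- first conclude $s^!(\F)\simeq 0$ from $t_*^{\IndCoh}s^!(\F)\simeq 0$, then apply $\Delta^!$ --- relies on precisely this unavailable statement.

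The paper avoids the issue by performing the dévissage to the heart on the \emph{source} side before ever applying $s^!$ or $t_*^{\IndCoh}$: for nonzero $\F\in\IndCoh(\LTd)$ (with $\LTd$ smooth, so some $H^i(\F)$ is nonzero), the $t$-exactness of the composite $t_*^{\IndCoh}s^!$ gives $H^i(t_*^{\IndCoh}s^!\F)\cong t_*^{\IndCoh}s^!H^i(\F)$; now $s^!H^i(\F)$ is a nonzero object of $\IndCoh(\Gamma_{\Wext})^{\heartsuit}$ (nonzero by $\Delta^!s^!\simeq\id$), and \cref{Pushforward from Ind-Affine Scheme is Conservative on Heart} applies to it directly. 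This ordering means one only ever needs heart-conservativity of $t_*^{\IndCoh}$, which is all the cited lemma provides. Your proof is repaired by exactly this rearrangement; as written, the step "conservativity of $t_*^{\IndCoh}$" is unjustified.
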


Note, this is in contrast to the fact that, as we argued in \cref{Explanatory Remark}(1), the functor $\LI$ in \labelcref{LI Definition} is not conservative unless we restrict to the eventually coconnective subcategory.
\begin{proof}
By base change (\cref{Base Change and t-Exactness for Coarse Quotient Functors}), it suffices to show that $t_*^{\IndCoh}s^!$ is conservative on $\IndCoh(\LTd)$. For a nonzero $\F \in \IndCoh(\LTd)$, there exists some $i$ for which $H^i(\F)$ is nonzero. Since, by \cref{Base Change and t-Exactness for Coarse Quotient Functors}, $t_*^{\IndCoh}s^!$ is $t$-exact, we see that $H^i(t_*^{\IndCoh}s^!\F) \cong t_*^{\IndCoh}s^!H^i(\F)$ and so we may assume $\F \in \IndCoh(\LTd)^{\heartsuit}$. For such an $\F$, we have $s^!(\F) \in \IndCoh(\LTd \times_{\LTd\sslash\Wext} \LTd)^{\heartsuit}$ by \cref{Base Change and t-Exactness for Coarse Quotient Functors} and is nonzero, since for example $\Delta^!s^!(\F) \simeq \F$ for $\Delta$ the diagonal map. Thus by \cref{Pushforward from Ind-Affine Scheme is Conservative on Heart} we see that $t_*^{\IndCoh}s^!(\F)$ is nonzero, as required.  
\end{proof}
\begin{Remark} The structure of the above comonad was previously known on the full subcategory of $B$-bimonodromic objects of the category  $\D(N\backslash G/N)_{\text{nondeg}}^{\heartsuit}$, see \cite[Section 5.1]{Bez2}. 
\end{Remark}

\subsection{Proof of Comonadicity of $\LD$ on Heart}\label{Comonadicity for D Modules}

We have recalled the right-completeness of the $t$-structure on $\D(N\backslash G/N)^{T \times T, w}_{\mathrm{nondeg}}$ in \cref{Summary of Companion Paper}(4). Moreover, by \cref{T Exactness of Shifted Psi Averaging}, the functor $\LD$ is $t$-exact and, by \cref{T Exactness of Shifted Psi Averaging}, admits a continuous right adjoint. The functor $\Avpsi$ is conservative on the eventually coconnective subcategory by construction of $\D(N\backslash G/N)_{\mathrm{nondeg}}$, and thus so too is its cohomological shift $\LD$. Therefore, \cref{NewBigTheorem}(2) follows immediately from \cref{ComonadicityCorollary}. 

\subsection{Fully Faithfulness of Enhanced Evaluation Functors}\label{Comonadicity of Evaluation Functor Subsection} We now prove \cref{NewBigTheorem}(3). To do this, we will first recall some techniques on computing limits of DG categories in \cref{Reminders on Lax Limits Subsubsection} and \cref{EffectiveLimitSection}, and then prove our desired comonadicity in \cref{EndIsComonadicOverIndCohtxtSection}. 
\subsubsection{Reminders on Lax Limits}\label{Reminders on Lax Limits Subsubsection}
We summarize the following useful proposition on computing limits in a limit of categories in DGCat, see \cite[Section 4.1]{AriGai} for more information. Assume $I \to \text{DGCat}$ is a diagram of categories for some index $\infty$-category $I$, which we denote $i \mapsto \C_i$, and let $\text{lim } \C_i$ denote a limit category. We recall the notion of a \textit{lax-limit category} $\text{lax-lim }  \C_i$, which is defined using co-Cartesian fibrations and, in particular, whose objects consist of objects $\F_i \in \C_i$ for each $i \in I$ and a map \begin{equation}\label{Map for Limit of Categories}\Phi_{\alpha}(\F_{i_1}) \to \F_{i_2}\end{equation} for every map $i_1 \xrightarrow{\alpha} i_2$ in $I$.

\begin{Proposition}\label{Condition for Lax Limit to Be In Limit Category} Assume $I \to \text{DGCat}, i \mapsto \C_i$ is defined as above. \begin{enumerate}
\item \cite[Section 4.1.1]{AriGai} There is a natural, fully faithful functor $\text{lim }\C_i \xhookrightarrow{} \text{lax-lim }\C_i$, and an object is in the essential image if and only if the associated maps $\Phi_{\alpha}(\F_{i_1}) \to \F_{i_2}$ are equivalences for all $\alpha$.
\item \cite[Section 4.1.8]{AriGai} For each $i \in I$, the natural evaluation functor $\text{ev}_i: \text{lax-lim }\C_i \to \C_i$ admits a left adjoint, and in particular commutes with limits.
\end{enumerate}
\end{Proposition}

\begin{Corollary}\label{LimitIsTermwise}
Assume we are given a diagram $J \to \text{lim }_i\C_i$, which we write $j \mapsto \F_{j, i} \in \C_i$, such that for each $j$ and for each map $i_1 \xrightarrow{\alpha} i_2$ in $I$, the corresponding map $\Phi_{\alpha}(\F_{j, i_1}) \to \F_{j, i_2}$ is an equivalence. Then the corresponding limit is computed termwise. 
\end{Corollary}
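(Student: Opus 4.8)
The plan is to deduce this from the two parts of \cref{Condition for Lax Limit to Be In Limit Category} together with the observation that limits in $\text{lax-lim}\,\C_i$ are computed termwise. First I would recall that by the first bullet of \cref{Condition for Lax Limit to Be In Limit Category} the functor $\text{lim}\,\C_i \xhookrightarrow{} \text{lax-lim}\,\C_i$ is fully faithful, so it suffices to show that the limit of the diagram $j \mapsto (\F_{j,i})_{i}$, when computed in $\text{lax-lim}\,\C_i$, already lands in the essential image of $\text{lim}\,\C_i$, and that this lax-limit limit is computed termwise.

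The termwise computation in the lax-limit is exactly the content of the second bullet of \cref{Condition for Lax Limit to Be In Limit Category}: each evaluation functor $\text{ev}_i : \text{lax-lim}\,\C_i \to \C_i$ admits a left adjoint, hence commutes with all limits. Therefore, writing $\F_i := \text{lim}_j \F_{j,i}$ for the limit computed in $\C_i$, the tuple $(\F_i)_i$ underlies the limit $\text{lim}_j (\F_{j,i})_i$ taken in $\text{lax-lim}\,\C_i$, with transition maps $\Phi_\alpha(\F_{i_1}) \to \F_{i_2}$ obtained as the limit over $j$ of the maps $\Phi_\alpha(\F_{j,i_1}) \to \F_{j,i_2}$ (using once more that $\Phi_\alpha$, having a right adjoint in the relevant direction, or simply by naturality of the co-Cartesian structure, is compatible with the evaluation functors).

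Now I would invoke the hypothesis: by assumption each map $\Phi_\alpha(\F_{j,i_1}) \to \F_{j,i_2}$ is an equivalence, so the transition map $\Phi_\alpha(\F_{i_1}) \to \F_{i_2}$, being a limit of equivalences, is an equivalence as well. By the first bullet of \cref{Condition for Lax Limit to Be In Limit Category} this means $(\F_i)_i$ lies in the essential image of $\text{lim}\,\C_i \xhookrightarrow{} \text{lax-lim}\,\C_i$. Since this inclusion is fully faithful and preserves limits (being a right adjoint by the standard yoga, or by direct inspection it reflects the limit), the object $(\F_i)_i$ is the limit of $j \mapsto \F_{j,\bullet}$ computed in $\text{lim}\,\C_i$, and this limit is computed termwise as claimed. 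The only mild subtlety — and the step I would be most careful about — is checking that the transition maps of the termwise-formed tuple genuinely agree with the limit of the transition maps of the $\F_{j,\bullet}$; this is a routine compatibility of the co-Cartesian fibration structure with limits, and can be checked after applying each $\text{ev}_i$, which detects equivalences jointly.
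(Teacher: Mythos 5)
Your proof is correct and follows essentially the same route as the paper's: use the second bullet of \cref{Condition for Lax Limit to Be In Limit Category} to compute the limit termwise in the lax-limit, observe that the transition maps of the resulting tuple are limits of equivalences and hence equivalences, and conclude via the first bullet that the limit lands in (and is detected by) the fully faithful subcategory $\text{lim}\,\C_i$. The paper's proof is just a terser version of the same argument; your extra care about the compatibility of the transition maps with the limit is a reasonable elaboration rather than a divergence.
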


\begin{proof}
The condition that each corresponding map $\Phi_{\alpha}(\F_{j, i_1}) \to \F_{j, i_2}$ is an equivalence implies that the limit over our $J$-shaped diagram, computed in the category $\text{lax-lim }\C_i$, lies in the category $\text{lim }\C_i$. Since the evaluation functor is a right adjoint, it commutes with limits, thus giving our claim. 
\end{proof}

\subsubsection{Nilpotent Towers and Effective Limits}\label{EffectiveLimitSection}
In this section, we recall the DG-analogue of ideas of Akhil Mathew (see, for example, \cite[Subsection 2.3]{Mat}) which will be used later. For this subsection, fix two DG categories $\C, \D$. 

\begin{Definition}
Assume we are given a \textit{tower} in $\mathcal{C}$, or, equivalently, a sequence $... \to \F^2 \to \F^1 \to F^0$ in $\mathcal{C}$. We say this tower is \textit{weakly nilpotent} if for all $n \in \mathbb{N}^{\geq 0}$ there exists an $N$ such that for all $m \geq N$, the natural map $\F^{m + n} \to \F^n$ is nullhomotopic. 
\end{Definition}
    
\begin{Definition}\label{EffectiveDefinition} Let $\C$ be some DG category or, more generally, any stable $\infty$-category, and fix some $\F \in \C$. 
\begin{enumerate}
    \item Let $\F_{\bullet} := (... \to \F_1 \to \F_0)$ be a tower in $\C$, and let $\underline{\F}$ denote the constant tower. We say the map of towers $\underline{\F} \to \F_{\bullet}$ forms an \textit{effective limit} (or, more informally, the maps $\F \to \F_{\bullet}$ \textit{form an effective limit}) if the tower $n \mapsto \text{cofib}(\F \to \F_n)$ is weakly nilpotent. 
    \item Let $S^{\bullet}$ denote some cosimplicial object of a category $\C$ and temporarily denote by $\F^{\bullet}$ the constant cosimplicial object. We say the map of cosimplicial objects $\F^{\bullet} \to S^{\bullet}$ (or, more informally, the maps $\F \to S^{\bullet}$) \textit{form an effective limit} if the maps $\F \to \text{Tot}^{\leq n}(S^{\bullet})$ form an effective limit. 
\end{enumerate}

\end{Definition}

\begin{Remark}\label{EffectiveRemark}
By definition, a tower in $\C$ is an object of the $(\infty, 1)$-category of functors Fun$(\mathbb{Z}^{op}_{\geq 0}, \C)$. Since colimits in functor categories are computed termwise, the cokernel of a map of towers is the tower of cokernels. Note also that if the tower $n \mapsto \text{cofib}(\F \to \F_n)$ is weakly nilpotent, then its limit is zero. 

We therefore see that, if the tower $n \mapsto \text{cofib}(\F \to \F_n)$ is weakly nilpotent, the canonical map $\F \simeq \text{lim}(\underline{\F}) \to \lim_n\F_n$ is an equivalence, so the term \lq effective limit\rq{} is justified. By abuse of notation, we sometimes say that the maps $\F \to \F^i$ form an effective limit. 
\end{Remark}

We now record a basic property of effective limits, see \cite[Proposition 2.20]{Mat}: 

\begin{Lemma}\label{EffectiveLemma} Let $F: \C \to \D$ is some exact functor of stable $\infty$-categories (which is always satisfied if $F$ is a map in $\DGCatContk$), and let $\F \to \F^i$ be a compatible family of maps as in \cref{EffectiveDefinition}. Then if the maps $\F \to \F^i$ form an effective limit in $\C$, then the maps $F(\F) \to F(\F^i)$ form an effective limit in $\D$.
\end{Lemma}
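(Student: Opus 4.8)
The statement (\cref{EffectiveLemma}) asserts that an exact functor $F \colon \C \to \D$ of stable $\infty$-categories preserves effective limits. Unwinding the definitions, the maps $\F \to \F^i$ forming an effective limit means precisely that the tower $n \mapsto \text{cofib}(\F \to \F^n)$ is weakly nilpotent. So the entire content to verify is: \emph{an exact functor sends a weakly nilpotent tower to a weakly nilpotent tower.} The plan is to reduce to this single observation and then prove it directly.

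First I would record the reduction: since $F$ is exact, it commutes with the formation of cofibers, so $F$ applied to the tower $n \mapsto \text{cofib}(\F \to \F^n)$ is canonically identified with the tower $n \mapsto \text{cofib}(F(\F) \to F(\F^n))$. (Here one uses that $F$ preserves cofiber sequences, which is part of being exact, together with the fact that towers are functors out of $\mathbb{Z}_{\geq 0}^{\text{op}}$ and $F$ postcomposes termwise with the structure maps, so the identification is compatible with all the transition maps in the tower.) Thus it suffices to show: if a tower $\mathcal{G}_\bullet = (\cdots \to \mathcal{G}^2 \to \mathcal{G}^1 \to \mathcal{G}^0)$ in $\C$ is weakly nilpotent, then $F(\mathcal{G}_\bullet)$ is weakly nilpotent in $\D$.

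Next I would verify this directly from the definition of weak nilpotence. Fix $n \in \mathbb{N}^{\geq 0}$. By weak nilpotence of $\mathcal{G}_\bullet$, there exists $N$ such that for all $m \geq N$ the structure map $\mathcal{G}^{m+n} \to \mathcal{G}^n$ is nullhomotopic. Since $F$ is a functor of $\infty$-categories, it carries a nullhomotopy of $\mathcal{G}^{m+n} \to \mathcal{G}^n$ to a nullhomotopy of $F(\mathcal{G}^{m+n}) \to F(\mathcal{G}^n)$ — concretely, a nullhomotopy is a factorization through a zero object up to coherent homotopy, and $F$, being exact, preserves zero objects and all the relevant simplicial data. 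Hence for all $m \geq N$ the map $F(\mathcal{G}^{m+n}) \to F(\mathcal{G}^n)$ is nullhomotopic, which is exactly weak nilpotence of $F(\mathcal{G}_\bullet)$ at level $n$. Since $n$ was arbitrary, $F(\mathcal{G}_\bullet)$ is weakly nilpotent.

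Combining the two steps: $F$ applied to $n \mapsto \text{cofib}(\F \to \F^n)$ is the tower $n \mapsto \text{cofib}(F(\F) \to F(\F^n))$, which is weakly nilpotent by the preceding paragraph, and this is precisely the statement that the maps $F(\F) \to F(\F^n)$ form an effective limit in $\D$. The cosimplicial case follows by applying the tower case to the partial totalizations $\text{Tot}^{\leq n}(S^\bullet)$, using that $F$ commutes with the finite limits computing these (again a consequence of exactness, since each $\text{Tot}^{\leq n}$ is a finite limit). The main — indeed only — subtlety is the bookkeeping in the first reduction step: one must be careful that the identification of $F$ applied to the cofiber tower with the cofiber tower of $F$ is natural in the tower variable, i.e.\ compatible with all transition maps, rather than just a levelwise equivalence; this is where the hypothesis that $F$ is genuinely a functor (not merely a map on objects) is used, but it is formal once stated correctly.
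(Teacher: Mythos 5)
Your proof is correct and follows essentially the same route as the paper's: identify $F$ of the cofiber tower with the cofiber tower of $F$ using exactness, then observe that $F$ preserves nullhomotopic maps and hence weak nilpotence. Your additional remarks on naturality in the tower variable and on the cosimplicial case via partial totalizations are sound elaborations of points the paper leaves implicit.
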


\begin{proof}
By definition of effective limit, the tower of cofibers given by $C_i := \text{cofib}(\F \to \F_i)$ is weakly nilpotent. By the definition of exactness, $F$ commutes with finite colimits, so that $F(C_i) \simeq \text{cofib}(F(\F) \to F(\F_i))$. Therefore, since $F$ preserves the class of maps which are equivalent to the 0 map, our claim follows, since exact functors preserve the zero object. 
\end{proof}

\subsubsection{Proof of \cref{NewBigTheorem}(3)}\label{EndIsComonadicOverIndCohtxtSection} 
\newcommand{\quotientFromCharacterQuotientToGITByWext}{\overline{\phi}}
The entirety of \cref{EndIsComonadicOverIndCohtxtSection} is devoted to the proof of \cref{NewBigTheorem}(3). We first prove the following Lemma: 

\begin{Lemma}\label{Map from Identity to Monad Forms Effective Limit}
For any $F \in \uEnd(\IndCoh(\LTd))$, the maps $F \to F(\quotientmapforcoarsequotient^!\quotientmapforcoarsequotient_*^{\IndCoh})^{\bullet + 1}$ form an effective limit. 
\end{Lemma}

\begin{proof}
Since the functor $F \circ -$ is exact, by \cref{EffectiveLemma} it suffices to show that the maps $\text{id}_{\IndCoh(\LTd)} \to (\quotientmapforcoarsequotient^!\quotientmapforcoarsequotient_*^{\IndCoh})^{\bullet + 1}$ form an effective limit. Using the identification $\text{ev}_{\omega_{\LTd}}: \uEnd(\IndCoh(\LTd)) \xrightarrow{\sim} \IndCoh(\LTd \times \LTd)$ this claim is equivalent to the claim that the maps $\omega_{\LTd} \to (\quotientmapforcoarsequotient^!\quotientmapforcoarsequotient_*^{\IndCoh})^{\bullet + 1}(\omega_{\LTd})$ form an effective limit.  Let $c_n: \omega_{\LTd} \to \text{Tot}^{\leq n}(\quotientmapforcoarsequotient^!\quotientmapforcoarsequotient_*^{\IndCoh})^{\bullet + 1}(\omega_{\LTd})$ denote the canonical map for each $n$. We claim that for each $n$, we have that $\tau^{\leq n - 1}c_n$ is an equivalence. To see this, note that by the conservativity of $\quotientmapforcoarsequotient_*^{\IndCoh}$ (\cref{Conervativity of Pushforward by QuotientMapforCoarseQuotient}) it suffices to show that $\quotientmapforcoarsequotient_*^{\IndCoh}\tau^{\leq n - 1}c_n$ is an equivalence. The $t$-exactness of $\quotientmapforcoarsequotient_*^{\IndCoh}$ (\cref{Base Change and t-Exactness for Coarse Quotient Functors}) allows us to identify this map with $\tau^{\leq n - 1}\quotientmapforcoarsequotient_*^{\IndCoh}c_n$. Since $\quotientmapforcoarsequotient_*^{\IndCoh}$ is exact, it commutes with finite limits, so we may furthermore identify $\tau^{\leq n - 1}\quotientmapforcoarsequotient_*^{\IndCoh}c_n$ with the map 
\raggedbottom
\[\tau^{\leq n - 1}\quotientmapforcoarsequotient_*^{\IndCoh}\omega_{\LTd} \to \tau^{\leq n - 1}\text{Tot}^{\leq n}\quotientmapforcoarsequotient_*^{\IndCoh}(\quotientmapforcoarsequotient^!\quotientmapforcoarsequotient_*^{\IndCoh})^{\bullet + 1}(\omega_{\LTd})\]

\noindent and by \cref{TruncationIdentity} we may further identify this map with the canonical map
\raggedbottom
\[\tau^{\leq n - 1}\quotientmapforcoarsequotient_*^{\IndCoh}\omega_{\LTd} \to \tau^{\leq n - 1}\text{Tot}(\quotientmapforcoarsequotient_*^{\IndCoh}(\quotientmapforcoarsequotient^!\quotientmapforcoarsequotient_*^{\IndCoh})^{\bullet + 1}(\omega_{\LTd}))\]

\noindent using the fact that $(\quotientmapforcoarsequotient^!\quotientmapforcoarsequotient_*^{\IndCoh})^{j + 1}(\omega_{\LTd})$ lies in the heart for every $j \in \mathbb{Z}^{\geq 0}$, see \cref{Base Change and t-Exactness for Coarse Quotient Functors}. However, this map is an equivalence since the cosimplicial object $\quotientmapforcoarsequotient_*^{\IndCoh}(\quotientmapforcoarsequotient^!\quotientmapforcoarsequotient_*^{\IndCoh})^{\bullet + 1}$ is split by $\quotientmapforcoarsequotient_*^{\IndCoh}$. We therefore see that $\tau^{\leq n - 1}c_n$ is an equivalence. 

We also have that $\text{Tot}^{\leq n}(\quotientmapforcoarsequotient^!\quotientmapforcoarsequotient_*^{\IndCoh})^{\bullet + 1}(\omega_{\LTd})$ is a totalization of objects in the (shifted) heart of a category equivalent to $A$-mod for some classical ring $A$, again using the exactness of \cref{Base Change and t-Exactness for Coarse Quotient Functors}. We thus see see $\text{Tot}^{\leq n}(\quotientmapforcoarsequotient^!\quotientmapforcoarsequotient_*^{\IndCoh})^{\bullet + 1}(\omega_{\LTd})$ lies in cohomological degree $[0, n]$. Therefore, if $K^n$ denotes the cofiber of the map $c_n$, this cofiber is concentrated in degree $n$ since $\tau^{\leq n - 1}c_n$ is an equivalence. In particular, we may choose $N \gg 0$ so that the space $\text{Hom}_{\IndCoh(\LTd \times \LTd)}(K^{N + n}, K^n)$ is connected (by the finite cohomological dimension of the $t$-structure on $\IndCoh(\LTd \times \LTd)$), so the maps from the identity to the tower of partial totalizations of our cosimplicial object form an effective limit by definition. 
\end{proof}
\begin{proof}[Proof of \cref{NewBigTheorem}(3)]
We prove  \cref{NewBigTheorem}(3) for $E_{\omega_{\LTd}}$; the proof that $E_{\delta}$ is fully faithful can be immediately deduced from the above commutative diagram or by very similar arguments. Observe that we have a commutative diagram \begin{equation}\label{Longhand diagram}\xymatrix@R+2em@C+2em{\uEnd_{\IndCoh(\LTd\sslash\Wext)}(I(\LTd)) \ar[r]^{\textcolor{white}{wjote}\mathrm{oblv}} \ar[d]^{\mathrm{pullback}_{\quotientmapforcoarsequotient^!}} \ar[dr]^{E_{\omega_{\LTd}}}& \uEnd(\IndCoh(\LTd))) \ar[d]^{E_{\omega_{\LTd}}}\\
\uHom_{\IndCoh(\LTd\sslash\Wext)}(\IndCoh(\LTd\sslash\Wext), \IndCoh(\LTd)) \ar[r]^{\textcolor{white}{whitewhitewhitewhite}E_{\omega_{\LTd\sslash\Wext}}} & \IndCoh(\LTd)
  }\end{equation} where, as above, $\quotientmapforcoarsequotient: \LTd \to \LTd\sslash\Wext$ is the quotient map. In what follows, it will be convenient to use the notational shorthand \newcommand{\I}{\mathrm{I}}
  \begin{equation}\xymatrix@R+2em@C+2em{\uEnd_{\I(\LTd\sslash\Wext)}(I(\LTd)) \ar[r]^{\mathfrak{o}} \ar[d]^{P} \ar[dr]^E & \uEnd(\I (\LTd))) \ar[d]^{\mathfrak{p}}\\
\uHom_{\I (\LTd\sslash\Wext)}(\I (\LTd\sslash\Wext), \I (\LTd)) \ar[r]^{\textcolor{white}{white}e} & \I (\LTd)
  }\end{equation} for each of the objects and functors in the diagram \labelcref{Longhand diagram}. Thus, in this notation, we wish to show that $E^{\mathrm{enh}}$ is fully faithful.

  Since the functor $e$ is an equivalence by \cref{Adjoints of Pulling Back by Adjoints and Evaluation at Monoidal Unit is Equivalence}(2), to show that $E^{\mathrm{enh}}$ is fully faithful it suffices to show that $P^{\mathrm{enh}}$ is fully faithful. To show that $P^{\mathrm{enh}}$ is fully faithful, it suffices to show that, for every $F \in \uEnd_{\I (\LTd\sslash\Wext)}(I(\LTd))$, that the unit map \begin{equation}\label{Unit map for enhanced adjoint}
      u(F): F \to \text{lim}_{\Delta}(P^RP)^{\bullet + 1}(F)
  \end{equation} for $P^{\mathrm{enh}}$ is an equivalence, where $P^R$ is the functor of pullback by $\quotientmapforcoarsequotient_*^{\IndCoh}$.\footnote{The functor $P^R$ is the right adjoint to $P$ by \cref{Adjoints of Pulling Back by Adjoints and Evaluation at Monoidal Unit is Equivalence}(1), which justifies the notation.}
  
  Fix some $F \in \uEnd_{\I (\LTd\sslash\Wext)}(I(\LTd))$. We first claim that the natural map \begin{equation}\label{Oblv of Comparison Map}\mathfrak{o}(\text{lim}_{\Delta} (P^RP)^{\bullet + 1}(F)) \to \text{lim}_{\Delta} \mathfrak{o}((P^RP)^{\bullet + 1}(F))\end{equation} is an equivalence. To this end, let $\imath$ denote the inclusion \[\imath: \uEnd_{\I(\LTd\sslash\Wext)}(\I(\LTd)) \simeq \text{lim}_{\Delta}\uHom(\I (\LTd\sslash\Wext)^{\bullet} \otimes \I (\LTd), \I (\LTd)) \xhookrightarrow{}\text{lax-lim}_{\Delta}\uHom(\I (\LTd\sslash\Wext)^{\bullet} \otimes \I (\LTd), \I (\LTd))\] of $\uEnd_{\I(\LTd\sslash\Wext)}(\I(\LTd))$ into the lax-limit of categories. We will compute the limit \[\text{lim}_{\Delta}\imath(F(\quotientmapforcoarsequotient^!\quotientmapforcoarsequotient_*^{\I})^{\bullet + 1}) = \text{lim}_{\Delta}\imath((P^RP)^{\bullet + 1}F)\] and show it lies the essential image of $\imath$. The explicit description of a limit of categories given in \cite[Corollary 3.3.3.2]{LuHTT} (see also \cite[Section 1.6.2]{DrinfeldGaitsgoryCompactGeneration}) implies that any object $F' \in \uEnd_{\I(\LTd\sslash\Wext)}(\I(\LTd))$ is given by a collection of $F'_{m}$ for every $m \in \Z^{\geq 0}$ such that $F'_{0} \simeq \mathfrak{o}(F)$ and, for each $m$, $F'_m$ is isomorphic to $\Phi_\alpha$ for some map $\alpha: 0 \to m$. In particular, since the collection of maps \[\mathfrak{o}(F) \to (P^RP)^{\bullet + 1}(\mathfrak{o}(F)) = \mathfrak{o}(F)(\quotientmapforcoarsequotient^!\quotientmapforcoarsequotient_*^{\IndCoh})^{\bullet + 1}\] forms an effective limit by \cref{Map from Identity to Monad Forms Effective Limit}, we deduce that, for each $m$, the collection of maps \[F_m \to (F(\quotientmapforcoarsequotient^!\quotientmapforcoarsequotient_*^{\IndCoh})^{\bullet + 1})_m\] form an effective limit for all $m$ by \cref{EffectiveLemma}. Therefore, for any map $\alpha: m \to m'$ in $\Delta$, we obtain that the collection of maps \[\Phi_{\alpha}(F_m) \to \Phi_{\alpha}((F(\quotientmapforcoarsequotient^!\quotientmapforcoarsequotient_*^{\IndCoh})^{\bullet + 1})_m)\] also forms an effective limit, again by \cref{EffectiveLemma}. By \cref{EffectiveRemark}, then, we deduce that \[\Phi_{\alpha}(F_m) \xrightarrow{\sim} \text{lim}_{\Delta} \Phi_{\alpha}((F(\quotientmapforcoarsequotient^!\quotientmapforcoarsequotient_*^{\IndCoh})^{\bullet + 1})_m) \simeq \text{lim}_{\Delta}(F(\quotientmapforcoarsequotient^!\quotientmapforcoarsequotient_*^{\IndCoh})^{\bullet + 1})_{m'})\] for every $\alpha$. Chasing through the definitions, we see that this map is the map of \labelcref{Map for Limit of Categories}. We deduce that we have a canonical equivalence \[\imath(\mathrm{lim}_{\Delta}(P^RP)^{\bullet + 1}F) \xrightarrow{\sim} \mathrm{lim}_{\Delta}\imath((P^RP)^{\bullet + 1}F)\] by \cref{Condition for Lax Limit to Be In Limit Category}(1). Therefore, by \cref{LimitIsTermwise}, the limit $\mathrm{lim}_{\Delta}((P^RP)^{\bullet + 1}F)$ is computed termwise. Therefore, the map \labelcref{Oblv of Comparison Map} is an isomorphism, as desired.

Now observe that $\mathfrak{p}$ commutes with limits: indeed, we may identify $\mathfrak{p}$ with the composite \[\uEnd(\I(\LTd)) \simeq \I(\LTd \times \LTd) \xrightarrow{p_*^{\IndCoh}} \I(\LTd)\] where $p$ is the projection onto the first factor. Applying the functor $\mathfrak{p}$ to  \labelcref{Oblv of Comparison Map} and using this, we deduce that the canonical map \begin{equation}\label{E of Comparison Map}E(\text{lim}_{\Delta} (P^RP)^{\bullet + 1}(F)) \to \text{lim}_{\Delta} E((P^RP)^{\bullet + 1}(F))\end{equation} is an equivalence. Now, since $E \simeq eP$ and $e$ is an equivalence, we deduce that \begin{equation}\label{P of Comparison Map}P(\text{lim}_{\Delta} (P^RP)^{\bullet + 1}(F)) \to \text{lim}_{\Delta} P((P^RP)^{\bullet + 1}(F))\end{equation} is an equivalence. Finally, we have an equivalence \begin{equation}\label{P Split Iso}P(F) \xrightarrow{\sim} \text{lim}_{\Delta} P((P^RP)^{\bullet + 1}(F))\end{equation} since $(P^RP)^{\bullet + 1}(F)$ is a $P$ split totalization. Since the maps $P(u(F))$, \labelcref{P of Comparison Map}, and \labelcref{P Split Iso} are compatible in the natural way, since \labelcref{P of Comparison Map} and \labelcref{P Split Iso} are equivalences we deduce that $P(u(F))$ is an equivalence. The functor $P$ is conservative, and so we deduce that $u(F)$ is an equivalence, as desired.
\end{proof}
\raggedbottom

\subsection{Identification of Comonads} \label{Identification of Comonads Subsection}We now prove \cref{NewBigTheorem}(4). To show \labelcref{IndCoh Counit Comonad Comparison} is an equivalence, it suffices to prove that \begin{equation}\label{Counit for ActI}\mathrm{act}_It^!  \simeq \mathrm{act}_I\mathrm{act}_I^RE_{\omega_{\LTd}}^R \xrightarrow{c_IE_{\omega_{\LTd}}^R } E_{\omega_{\LTd}}^R\end{equation} induced by the counit $c_I$ of the adjoint pair $(\mathrm{act}_I, \mathrm{act}_I^R)$ is an equivalence, since if we apply the functor $E_{\omega_{\LTd}}$ to \labelcref{Counit for ActI} and use essential uniqueness of adjoints we recover \labelcref{IndCoh Counit Comonad Comparison}. Since $\LTd$ is smooth, $\IndCoh(\LTd)$ is generated by the dualizing complex $\omega_{\LTd}$ and so it suffices to prove that the map \begin{equation}\label{Counit for ActI Evaluated at OmegaLTd} \mathrm{act}_It^!(\omega_{\LTd})  \simeq \mathrm{act}_I\mathrm{act}_I^RE_{\omega_{\LTd}}^R(\omega_{\LTd}) \xrightarrow{c_IE_{\omega_{\LTd}}(\omega_{\LTd})} E_{\omega_{\LTd}}^R(\omega_{\LTd})\end{equation} obtained from evaluating the natural transformation \labelcref{Counit for ActI} at $\omega_{\LTd}$ is an equivalence. To this end, we first observe that we have a commutative diagram \begin{equation}
\xymatrix@R+2em@C+2em{\LTd \times_{\LTd\sslash \Wext} \LTd \ar[r]^{(t, s, s)} \ar@/_1pc/[rr]_{s} \ar[d]^t & \LTd \times_{\LTd\sslash \Wext} \LTd \times \LTd \ar[r]^{\textcolor{white}{white}\mathrm{pr}_{\LTd}} & \LTd \ar[d]^{\quotientmapforcoarsequotient}\\  \LTd \ar[rr]^{\quotientmapforcoarsequotient} & & \LTd\sslash\Wext
  }\end{equation} where $\mathrm{pr}_{\LTd}$ is projection onto the rightmost factor. We deduce a natural isomorphism \begin{equation}\label{ActI is Pull Push}\mathrm{act}_It^!(\omega_{\LTd}) := t_*^{\IndCoh}(t, s, s)^!\mathrm{pr}_{\LTd}^! \simeq t_*^{\IndCoh}s^! \end{equation} by the functoriality of $!$-pullback.
  
  We use the notational shorthand $\mathfrak{c_{\mathrm{aff}}} := \LTd\sslash \Wext$. Observe that we have a canonical isomorphism $E_{\omega_{\mathfrak{c_{\mathrm{aff}}}}}(\quotientmapforcoarsequotient^!) := \quotientmapforcoarsequotient^!(\omega_{\mathfrak{c_{\mathrm{aff}}}}) \simeq \omega_{\LTd}$. In particular the unit map gives an isomorphism \begin{equation}\label{ER of OmegaLTd Is Push Pull Along Quotient}E^R_{\omega_{\LTd}}(\omega_{\LTd}) \simeq \mathcal{P}_{\quotientmapforcoarsequotient_*^{\IndCoh}}E^R_{\omega_{\mathfrak{c_{\mathrm{aff}}}}}(\omega_{\LTd}) \simeq \mathcal{P}_{\quotientmapforcoarsequotient_*^{\IndCoh}}E^R_{\omega_{\mathfrak{c_{\mathrm{aff}}}}}(E_{\omega_{\mathfrak{c_{\mathrm{aff}}}}}(s^!)) \xleftarrow{\sim} \mathcal{P}_{\quotientmapforcoarsequotient_*^{\IndCoh}}(\quotientmapforcoarsequotient^!) := \quotientmapforcoarsequotient^!\quotientmapforcoarsequotient_*^{\IndCoh}\end{equation} since $E_{\omega_{\mathfrak{c_{\mathrm{aff}}}}}$ is an equivalence of categories by \cref{Adjoints of Pulling Back by Adjoints and Evaluation at Monoidal Unit is Equivalence}(2). Thus, combining \labelcref{Counit for ActI Evaluated at OmegaLTd}  with \labelcref{ActI is Pull Push} and \labelcref{ER of OmegaLTd Is Push Pull Along Quotient} we obtain a map \[\eta: t_*^{\IndCoh}s^! \xrightarrow{} \quotientmapforcoarsequotient^!\quotientmapforcoarsequotient_*^{\IndCoh}.\] Tracing through the definitions, one can show that $\eta$ is the base change morphism. In particular, since $\IndCoh$ satisfies proper base change \cite[Chapter 3, Propoosition 2.2.2]{GaRoII} we deduce that this map is an equivalence, as desired. In a parallel manner, we obtain that \labelcref{DMod Counit Comonad Comparison} is an equivalence; the morphism analogous to the base change morphism above is an isomorphism since the functor on $\D(N^-_{\psi}\backslash G/N)$ obtained by $\psi$-averaging on the left side and then $N$-averaging on the right side is equivalent to the functor obtained from $N$-averaging on the left side and then $\psi$-averaging on the right side.
\subsection{Compact Generators of $\IndCoh(\LTd \times_{\LTd\sslash \Wext} \LTd)$}
In \cref{Summary of Companion Paper}(5), we recalled that $\CatTwTw$ has a canonical set of compact generators labeled by $\Wext$ given by the set $\{\delta_{\mathcal{D}} w : w \in \Wext\}$, where $\delta_{\mathcal{D}}$ denotes the monoidal unit of $\CatTwTw$. We obtain a similar description for the category $\IndCohx$, which we will use to prove \cref{NewBigTheorem}(5):

\begin{Proposition}\label{Compact Generators of IndCoh Category}
The category $\IndCohx$ has a canonical set of compact generators given by $\{\delta w : w \in \Wext\}$, where $\delta := i_*^{\text{IndCoh}}(\omega_{\LTd})$ and $i: \LTd \xhookrightarrow{} \LTd \times_{\LTd\sslash\Wext} \LTd$ the diagonal map, so that $\delta$ is the monoidal unit. 
\end{Proposition}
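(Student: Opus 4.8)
The plan is to verify compactness of the objects $\delta w$ directly and then prove the generation statement by dévissage, reducing it to a statement about $\IndCoh$ of the finite-type schemes that exhaust the ind-scheme $\Gamma_{\Wext}$. Recall from \cite{GannonDescentToTheCoarseQuotientForPseudoreflectionAndAffineWeylGroups} that $\Gamma_{\Wext}$ is the union of the graphs $\Gamma_w=\{(\lambda,w\lambda)\}$, $w\in\Wext$, presented as the filtered colimit $\text{colim}_S\Gamma_S$ over finite subsets $S\subset\Wext$, where $\Gamma_S:=\bigcup_{w\in S}\Gamma_w$ is a reduced closed subscheme of the affine space $\LTd\times\LTd$, hence affine. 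Each $\Gamma_w$ is a closed subscheme of $\Gamma_{\Wext}$ isomorphic to $\LTd$ via either projection; writing $i_w\colon\LTd\cong\Gamma_w\hookrightarrow\Gamma_{\Wext}$ for the resulting closed (ind-)embedding, we have $\delta=i_{e,*}^{\IndCoh}(\omega_{\LTd})$, and the convolution formalism of \cite[Chapter 5, Section 5]{GaRoI} identifies $\delta$ with the monoidal unit and identifies $\delta w$ with $i_{w,*}^{\IndCoh}(\omega_{\LTd})$ (convolution with the class of $w$ translates the diagonal graph to $\Gamma_w$). Since $\LTd$ is smooth, $\omega_{\LTd}$ is a shifted line bundle and in particular compact in $\IndCoh(\LTd)$; as $i_w$ is ind-proper and ind-proper pushforward preserves coherence \cite[Chapter 3]{GaRoII}, each $\delta w$ is compact.

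For the generating property, since the $\delta w$ are compact, the full subcategory $\mathcal{C}\subseteq\IndCoh(\Gamma_{\Wext})$ they generate under colimits is preserved by the inclusion into $\IndCoh(\Gamma_{\Wext})$, which moreover admits a continuous right adjoint by \cref{Subcategory Generated By Compact Objects Implies Inclusion Preserves Compact Objects}. Because $\IndCoh(\Gamma_{\Wext})\simeq\text{colim}_S\IndCoh(\Gamma_S)$ along the fully faithful $*$-pushforwards $(j_S)_*^{\IndCoh}$, the category $\IndCoh(\Gamma_{\Wext})$ is generated under colimits by the union of the essential images $(j_S)_*^{\IndCoh}\IndCoh(\Gamma_S)$; so it suffices to show that for each finite $S$ the category $\IndCoh(\Gamma_S)$ is generated under colimits by the dualizing sheaves $(i_w)_*^{\IndCoh}(\omega_{\Gamma_w})$ of its irreducible components $\Gamma_w$, $w\in S$ (these push forward to the $\delta w$). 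Using the adjunction $((i_w)_*^{\IndCoh},i_w^!)$ together with the fact that $\omega_{\Gamma_w}$ generates $\IndCoh(\Gamma_w)\simeq\IndCoh(\LTd)$ (a smooth affine variety), this reduces, via the criterion that the inclusion of a subcategory generated by compacts is an equivalence iff its right adjoint is conservative, to the statement: if $\mathcal{G}\in\IndCoh(\Gamma_S)$ satisfies $i_w^!\mathcal{G}\simeq 0$ for every component $\Gamma_w$, then $\mathcal{G}\simeq 0$.

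I would prove this last statement by induction on dimension, in the form: for $Y$ a reduced closed subscheme of an affine space which is a finite union of affine subspaces, any $\mathcal{G}\in\IndCoh(Y)$ with $i_P^!\mathcal{G}\simeq 0$ for every affine subspace $P\subseteq Y$ is zero. (The hypothesis on $\Gamma_S$ supplies this, since $i_w^!\mathcal{G}\simeq 0$ forces $i_{P}^!\mathcal{G}\simeq 0$ for any affine subspace $P\subseteq\Gamma_w$, and every affine subspace in $\Gamma_S$ lies in some component.) The inductive step peels off the regular locus: $j\colon Y^{\mathrm{reg}}\hookrightarrow Y$ is open, smooth and quasi-affine (as $Y$ is affine), and decomposes as the disjoint union of the opens $\Gamma_w\setminus\bigcup_{w'\neq w}\Gamma_{w'}\subseteq\Gamma_w$; since $\IndCoh$ of each of these equals its $\QCoh$ (smoothness) and is generated under colimits by its structure sheaf (quasi-affineness), and the $!$-restriction of $\mathcal{G}$ to each vanishes (being the restriction of $i_{\Gamma_w}^!\mathcal{G}\simeq 0$), we get $j^!\mathcal{G}\simeq 0$. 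The closed–open recollement for $\IndCoh$ \cite[Chapter 3]{GaRoII} then gives $\mathcal{G}\simeq i_{Z,*}^{\IndCoh}(i_Z^!\mathcal{G})$ for $Z:=Y\setminus Y^{\mathrm{reg}}$, a reduced closed subscheme which is again a finite union of affine subspaces (the pairwise intersections of the $\Gamma_w$) of strictly smaller dimension; for any affine subspace $P\subseteq Z$ one has $i_{P\subseteq Z}^!(i_Z^!\mathcal{G})\simeq i_{P\subseteq Y}^!\mathcal{G}\simeq 0$ by composition of $!$-pullbacks, so the inductive hypothesis applies to $Z$ and yields $i_Z^!\mathcal{G}\simeq 0$, hence $\mathcal{G}\simeq 0$. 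The base case is $Y$ smooth, where $Y$ is a disjoint union of affine subspaces and the hypothesis applied to these components directly forces $\mathcal{G}\simeq 0$.

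The main obstacle is this scheme-level dévissage: one must keep careful track that every closed subscheme produced by the induction remains (quasi-)affine — which holds here because $\Gamma_S$ is affine, so all its closed subschemes are — in order to apply the key input that $\QCoh$ of a smooth quasi-affine variety is generated under colimits by its structure sheaf, and one must invoke the $\IndCoh$ closed–open recollement correctly. The remaining ingredients — compactness of $\delta w$, the colimit presentation of $\IndCoh$ on an ind-scheme, and the passage from a compact generating set to the whole category — are routine and already available in the paper and in \cite{GaRoII}.
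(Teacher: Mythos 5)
Your proposal is correct, and the compactness half and the reduction to finite unions $\Gamma_S$ match the paper. Where you diverge is in the generation step at the level of a fixed $\Gamma_S$: the paper disposes of this in one line by observing that $\coprod_{w \in S}\LTd \to \Gamma_S$ is surjective on geometric points and then citing the general fact (\cite[Chapter 4, Proposition 6.2.2]{GaRoI}) that $!$-pullback along such a map is conservative on $\IndCoh$, so that some $i_w^!(\F)$ is nonzero and the adjunction $\uHom(i_{w,*}^{\IndCoh}(\omega_{\LTd}),\F)\simeq \uHom(\omega_{\LTd}, i_w^!(\F))$ finishes the argument. You instead prove the needed conservativity by hand, via an induction on dimension for reduced finite unions of affine subspaces, peeling off the disjoint smooth quasi-affine opens $\Gamma_w\setminus\bigcup_{w'\neq w}\Gamma_{w'}$ and invoking the closed--open recollement for $\IndCoh$ to push the problem onto the lower-dimensional locus of pairwise intersections. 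Your dévissage is sound --- the strengthened hypothesis (vanishing of $i_P^!\mathcal{G}$ for \emph{every} affine subspace $P$) is genuinely needed to make the induction close, and you correctly note that it follows from vanishing on the components since any irreducible $P\subseteq\Gamma_S$ lies in a single $\Gamma_w$; the pairwise intersections are again reduced affine subspaces of strictly smaller dimension, so the induction terminates. The trade-off is that your argument is self-contained and makes the geometry of $\Gamma_S$ explicit, at the cost of about a page of recollement bookkeeping, whereas the paper's citation handles arbitrary finite-type targets in one stroke and is the standard tool elsewhere in the text (e.g.\ in the proof of \cref{Computation of Nondegenerate Category O at a Block}). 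Either route is acceptable.
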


\begin{proof}
These objects are compact since the $\IndCoh$ pushforward by a closed embedding is a left adjoint with a continuous right adjoint, and thus preserves compact objects. We now show this set generates; fix a nonzero $\F \in \IndCohx$. There exists some finite subset $S \subseteq \Wext$ so that $i_S^!(\F)$ is nonzero. Note also that that for each finite $S \subseteq \Wext$, the map $\coprod_{w \in S}\LTd \to \Gamma_S$ is surjective at the level of geometric points, and so in particular, by \cite[Proposition 6.2.2]{GaRoI}, there exists some $w \in \Wext$ such that $i_w^!(\F)$ is nonzero. Therefore $\uHom(\omega_{\LTd}, i_w^!(\F)) \simeq \uHom(i_{w, *}^{\IndCoh}(\omega_{\LTd}), \F) \simeq \uHom(\delta w, \F)$ is nonzero. 
\end{proof}

\subsection{Proof of \cref{NewBigTheorem}(5) and \cref{Main Monoidal Equivalences}} \label{Proof of NewBigTheorem 5 and Main Monoidal Equivalences} By \cref{NewBigTheorem}(1) and \cref{NewBigTheorem}(2) all four of the functors $t_*^{\IndCoh, \mathrm{enh}}$ and $\mathsf{A}^{\psi, \mathrm{enh}}_!$ are equivalences when restricted to the eventually coconnective category. Moreover, by \cref{NewBigTheorem}(3), the functors $E_{\omega_{\LTd}}^{\mathrm{enh}}$ and $E_{\delta}^{\mathrm{enh}}$ are fully faithful. Using \cref{NewBigTheorem}(4), we may identify all of the corresponding comonads acting on $\IndCoh(\LTd)$. We deduce that the functors $\mathrm{act}_I$ and $\mathrm{act}_\D$ are equivalences of categories when restricted to the eventually coconnective objects, proving \cref{NewBigTheorem}(5). We also observe that these functors are $\Wext$-equivariant and send the monoidal units to the monoidal units. Therefore these equivalences of categories identify the full subcategories Karoubi generated by the monoidal unit and all of the objects (equivalent to) its $\Wext$-orbit. By \cref{Summary of Companion Paper}(5) and \cref{Compact Generators of IndCoh Category}, the categories $\CatTwTw$ and $\IndCoh(\LTd \times_{\LTd\sslash\Wext} \LTd)$ can be identified as the ind-completion of their respective subcategories of compact objects. Moreover, by \cite[Corollary 1.4.6]{DrGai2}, each of these subcategories of compact objects is Karoubi-generated by the objects in the $\Wext$-orbit of the monoidal unit. Therefore, we obtain our induced equivalence of categories by ind extension of the compact generators. Finally, the monoidality follows since the subcategory of $\Wext$-orbit of $\delta$ is a monoidal subcategory, and each of the functors in this equivalence of compact objects are monoidal. Since, $\Gamma_{\Wext} \simeq \LTd \times_{\LTd\sslash\Wext} \LTd$ essentially by construction of $\LTd\sslash\Wext$ (see \cite[Proposition 4.7]{GannonDescentToTheCoarseQuotientForPseudoreflectionAndAffineWeylGroups}) we deduce equivalences as in \labelcref{First Main Moniodal Equivalence}. The construction of equivalences as in \labelcref{Second Line of Equivalence} can be done similarly, or be obtained using de-equivariantization. \hfill \qedsymbol \qedsymbol
\begin{Remark}\label{Similar Equivalences Remark}
We expect similar methods yield equivalences of categories 
\raggedbottom
\[\LG\text{-mod}^N_{\text{nondeg}} \simeq \text{IndCoh}(\LTd\sslash W \times_{\LTd\sslash W^{\text{aff}}} \LTd/\characterlatticeforT)\]

\noindent compatible with the $T$ action and the $\IndCoh(\LTd\sslash W)$ action, and a monoidal equivalence
\raggedbottom
\[\mathcal{HC}_{\text{nondeg}} := \uEnd_G(\D(G)_{\text{nondeg}}^{G,w}) \simeq \text{IndCoh}(\LTd\sslash W \times_{\LTd\sslash W^{\text{aff}}} \LTd\sslash W).\]
\end{Remark}

\section{The Nondegenerate Horocycle Functor}\label{The Nondegenerate Horocycle Functor Section}
In this section, we construct a nondegenerate variant of the horocycle functor and show in \cref{Construction of W-Equivariance for Parabolic Restriction of Very Central Sheaves} that it can be used to equip $\text{Res}(\F)$ with a $W$-equivariance which descends to the coarse quotient $\LTd\sslash\Wext$ for $\F \in \D(G)^{G, \heartsuit}$ very central.

\subsection{The Nondegenerate Horocycle Functor}\label{Horocycle Functor Section}
We now construct a functor on $G \times G$ categories $\C$ which for $\C = \D(G)$ recovers the usual horocycle functor. To define such a functor, it suffices to define it in the \lq universal case\rq{} $\C = \D(G \times G)$. We consider the category $\D(G \times G)$ as a right $G \times G$ category and let $\Psi$ denote the composite functor 
\raggedbottom
\[\D(G \times G)^{\Delta_G} \xrightarrow{\text{oblv}^{\Delta_G}_{\Delta_B}} \D(G \times G)^{\Delta_B} \xrightarrow{\text{Av}_*^{N \times N}} \D(G/N \times G/N)^{\Delta_T}\]

\noindent where the group $\Delta_G$ denotes the diagonal copy of $G$ and the rightmost functor is induced by the averaging functor. Let $J^!$ denote the quotient functor $\D(G/N \times G/N) \to \D(G/N \times G/N)_{\text{nondeg}}$ which projects onto the nondegenerate subcategory, again taken with respect to the right action. Since this nondegenerate category is closed under the action of $T \times T$, we may equivalently view $J^!$ as a functor $\D(G/N \times G/N)^{\Delta_T} \to \D(G/N \times G/N)_{\text{nondeg}}^{\Delta_T}$. 

\begin{Theorem}\label{Claims About Horocycle Functor as G x G Categorical Functor}The functor $J^!\Psi: \D(G \times G)^{\Delta_G} \to \D(G/N \times G/N)^{\Delta_T}_{\text{nondeg}}$ lifts to a functor of $G \times G$ categories:
    \raggedbottom \[\tilde{\Psi}: \D(G \times G)^{\Delta_G} \to \D(G/N \times G/N)^{\Delta_{T \rtimes W}}_{\text{nondeg}}.\] Furthermore, fixing a simple coroot $\alpha$, we have the following: 
    
\begin{enumerate}
    \item The action of the Klein four group $\langle r_{\alpha} \times s_{\alpha} \rangle$ on $\D(G/N \times G/N)_{\text{nondeg}}^{\Delta_T\mathbb{G}_m^{\alpha}}$ is trivial. 
    \item The composite \raggedbottom
    \begin{equation}\label{Composite of TildePsi Forgetting and Averaging on Equivariant Category}\D(G \times G)^{\Delta_G} \xrightarrow{\text{Av}_*^{\mathbb{G}_m^{\alpha}}\text{oblv}_{\langle r_{\alpha} \rangle}^W\tilde{\Psi}} \D(G/N \times G/N)_{\text{nondeg}}^{\Delta_T\mathbb{G}_m^{\alpha}, \langle r_{\alpha} \rangle} \simeq \D(G/N \times G/N)^{\Delta_T\mathbb{G}_m^{\alpha}}_{\text{nondeg}} \otimes \text{Rep}\langle r_{\alpha} \rangle\end{equation}
    \noindent where the second equivalence is given by (2), lies entirely in the summand indexed by the trivial representation.
    \item If $\F \in \D(G)^G \simeq \D(G \times G)^{\Delta_G^{\ell} \times \Delta_G^r}$ is very central, then the sheaf 
    \raggedbottom
    \[\tilde{\Psi}(\F) \in \D(N\backslash G/N)_{\text{nondeg}}^{\Delta_{T \rtimes W}} \simeq \D((G/N \times G/N)/T)_{\text{nondeg}}^{\Delta_G, \Delta_W}\]
    
    \noindent has the property that the canonical $\langle r_{\alpha} \rangle$-representation on $\text{Av}_*^{\mathbb{G}_m^{\alpha}}\tilde{\Psi}(\F)$ is trivial. 
\end{enumerate}
\end{Theorem}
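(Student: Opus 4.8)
The plan is to mimic, in the bi-equivariant setting, the proof of the one-sided statements \cref{Nondegeneracy is Closed Under Actions}, \cref{SFT for Each Simple Reflection Can Be Identified with Identity}, and \cref{Action of Order Two Simple Reflection Group on Associated G_m Invariants is Trivial}, exploiting the fact that the left and right $N\times N$-averaging (and hence the two commuting copies of the Gelfand--Graev action) canonically commute. We work in the universal case $\C = \D(G\times G)$, since any $G\times G$-category is recovered from this one by tensoring as in the proof of \cref{N Averaging Fully Faithful on Whittaker Subcategory}, and since all the functors in sight are functors of $G\times G$-categories.

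\textbf{Construction of $\tilde\Psi$ and the $\Delta_{T\rtimes W}$-action.} First I would observe that the right $N\times N$-averaging $\text{Av}_*^{N\times N}$ followed by the quotient functor $J^!$ onto $\D(G/N\times G/N)_{\text{nondeg}}$ lands in the nondegenerate category on each factor separately; since by \cref{Nondegeneracy is Closed Under Actions} the subcategory $\D(G/N)_{\text{nondeg}}$ is closed under the Gelfand--Graev $W$-action, applying this fact to each factor shows $\D(G/N\times G/N)_{\text{nondeg}}$ carries a $W\times W$-action, and restricting along the diagonal $\Delta_W$ together with the diagonal $\Delta_T$ gives the $\Delta_{T\rtimes W}$-equivariance. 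Concretely, the functor $J^!\Psi$ is given by convolution with an integral kernel, which after applying \cref{SupportOfWhittakerSheaves} to each side one identifies (up to cohomological shift) with $\delta_1\boxtimes\delta_1$, i.e. the skyscraper at the identity of $T\times T$; the diagonal closed embedding $\ast\hookrightarrow T$ being $W$-equivariant (\cref{WActionOnWhittaker}) lets one equip this kernel with $\Delta_W$-equivariance, exactly as in the proof of \cref{UniversalAvNIsFullyFaithful}.

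\textbf{Statements (1)--(3).} For (1): both $s_\alpha\times 1$ and $1\times s_\alpha$ act by (symplectic) Fourier transforms in the respective variable, and \cref{Action of Order Two Simple Reflection Group on Associated G_m Invariants is Trivial} shows each order-two factor acts trivially on $\mathbb{G}_m^\alpha$-invariants of $\D(G/N)_{\text{nondeg}}$; since the two actions commute and the group is $\langle s_\alpha\rangle\times\langle s_\alpha\rangle$, one obtains triviality of the Klein four action on $\D(G/N\times G/N)^{\Delta_T\mathbb{G}_m^\alpha}_{\text{nondeg}}$ by applying this factorwise (using that $\D(G/N\times G/N)^{\Delta_T\mathbb{G}_m^\alpha}_{\text{nondeg}}$, after forgetting diagonal equivariance, is built from the two one-sided $\mathbb{G}_m^\alpha$-invariant categories). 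For (2), by \cref{UniversalCaseRemark} it suffices to check the claim on the image of $\delta_{N^-,\psi}\boxtimes\delta_{N^-,-\psi}$ (or the appropriate generator of $\D(G\times G)^{\Delta_G}$); one computes as in \cref{Averaging Essential Image of AvN Has Trivial Representation of Order Two Reflection Group} that $\tilde\Psi$ sends this object to $\delta_1\in\D(T)$ with its natural $\Delta_W$-equivariance, and then $\text{Av}_*^{\mathbb{G}_m^\alpha}$ applied to the equivariant $\delta_1$ acquires a trivial $\langle s_\alpha\rangle$-representation, exactly as in the proof of \cref{Averaging Essential Image of AvN Has Trivial Representation of Order Two Reflection Group}. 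Finally (3) is deduced from (2): if $\F\in\D(G)^G\simeq\D(G\times G)^{\Delta_G^\ell\times\Delta_G^r}$ is very central, then by definition $\text{oblv}^T\text{hc}(\F)$ is supported on $N\backslash B/N$, which forces $\tilde\Psi(\F)$ to lie in the ($G\times G$-)subcategory generated by the image under $\tilde\Psi$ of the Whittaker-type objects governed by (2) (precisely because, after taking $\Delta_G$-invariants and tracing through the identifications, very centrality means $\tilde\Psi(\F)$ factors through the support condition cut out by the generator in (2)); applying the already-established (2) to these objects yields the triviality of the canonical $\langle s_\alpha\rangle$-representation on $\text{Av}_*^{\mathbb{G}_m^\alpha}\tilde\Psi(\F)$.

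\textbf{Main obstacle.} The construction of the $\Delta_{T\rtimes W}$-equivariant \emph{lift} $\tilde\Psi$ (rather than just the plain functor $J^!\Psi$) is the delicate part: one must check that the $W\times W$-equivariant structure on the integral kernel $\delta_1\boxtimes\delta_1$ coming from \cref{WActionOnWhittaker} applied to each side genuinely restricts along $\Delta_W$ in a way compatible with $\Delta_T$, and that this produces a functor of $G\times G$-categories and not merely a functor of underlying DG categories. Relatedly, in step (3) the reduction from the very-centrality hypothesis to the setting where (2) applies requires care: one needs to argue that the support condition defining very central sheaves translates, after $\tilde\Psi$, into membership in exactly the $G\times G$-subcategory on which the computation of (2) was carried out, using that $\text{Av}_*^{\mathbb{G}_m^\alpha}$ is a functor of $G\times G$-categories and that the relevant generators are preserved. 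I expect the bookkeeping of the various equivariances (left vs.\ right, diagonal $T$, diagonal $W$, and the auxiliary $\mathbb{G}_m^\alpha$) to be the principal source of difficulty, though no genuinely new idea beyond those in \cref{WeylGroupAction} and \cref{ProofOfFirstTheorem} should be needed.
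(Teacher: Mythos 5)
Your overall strategy (reduce to the universal case, identify the integral kernel of $J^!\Psi$, and equip it with a diagonal $W$-equivariant structure) is the same as the paper's, and your treatment of statement (1) via a factorwise application of \cref{Action of Order Two Simple Reflection Group on Associated G_m Invariants is Trivial} matches what the paper does. However, there is a genuine gap at the central step: your identification of the integral kernel. The functor $\Psi$ is built from $\text{oblv}^{\Delta_G}_{\Delta_B}$ and $\text{Av}_*^{N\times N}$; no Whittaker averaging occurs, so \cref{SupportOfWhittakerSheaves} does not apply, and the kernel $\Psi(\delta_{\Delta_G})$ is \emph{not} a skyscraper at the identity of $T\times T$. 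The paper instead uses the monoidality of $\horocycleFunctor$ (the identification \labelcref{MonoidalityOfHorocycle}) to recognize $\Psi(\delta_{\Delta_G})$ as the monoidal unit $\delta_1$ of $\D(N\backslash G/N)^T$, i.e.\ the delta sheaf along $N\backslash B/N\hookrightarrow N\backslash G/N$ with its diagonal $T$-equivariance. The diagonal $W$-equivariant structure on $J^!(\delta_1)$ is then \emph{not} inherited from the $W$-equivariance of the embedding $\ast\hookrightarrow T$ as in \cref{UniversalAvNIsFullyFaithful}; it comes from \cref{Main Monoidal Equivalences}, under which $J^!(\delta_1)$ corresponds to $\Delta_*^{\IndCoh}(\omega_{\LTd/\characterlatticeforT})$ for the $\Delta_W$-equivariant diagonal map $\LTd/\characterlatticeforT\to\LTd/\characterlatticeforT\times_{\LTd\sslash\Wext}\LTd/\characterlatticeforT$. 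Without this input your proposal supplies no mechanism producing the isomorphisms $(F_s\boxtimes F_s)(J^!(\delta_1))\simeq J^!(\delta_1)$ and their coherences, which is exactly the content of the lift.

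Two smaller points. For statement (2), the object on which to evaluate is again $\delta_{\Delta_G}$ (by \cref{UniversalCaseRemark} applied to the $G\times G$-functor in \labelcref{Composite of TildePsi Forgetting and Averaging on Equivariant Category}), not a Whittaker object such as $\delta_{N^-,\psi}\boxtimes\delta_{N^-,-\psi}$, which does not lie in $\D(G\times G)^{\Delta_G}$; the computation then identifies $\text{Av}_*^{\mathbb{G}_m^{\alpha}}\Psi(\delta_{\Delta_G})$ with the monoidal unit of $\D(T/\mathbb{G}_m^{\alpha})$ under the Mellin transform. And statement (3) requires no reduction via the support condition of very centrality: it is literally a special case of (2), since (2) constrains the entire essential image of the composite functor, so your extra argument is unnecessary and, as written, too vague to verify.
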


The final point of \cref{Claims About Horocycle Functor as G x G Categorical Functor} should be compared to the condition of descending to the coarse quotient given by the final point of \cref{New Various Conditions for Wext Equivariant Sheaf to Satisfy Coxteter Descent}. Note that we may identify $\text{hc} := \Psi^{\Delta_G^{\ell}}$.
 
\begin{proof} Note that the functor $\Psi$ itself is $G$-equivariant and so it suffices to construct a lift of $\text{hc}(\delta_{\Delta_{G}})$. However, $\text{hc}$ is monoidal. This fact as well known, and can be seen by identifying the monoidal functor
\begin{equation}\label{MonoidalityOfHorocycle}
\D(G)^G \simeq \uEnd_{G \times G}(\D(G)) \xrightarrow{\text{Av}^N_*} \uEnd_{G \times T}(\D(G/N)) \simeq \D(N\backslash G/N)^T
\end{equation}

\noindent with $\text{hc}$. Therefore, we see that we may identify $\Psi(\delta_{\Delta_{G}})$ (with its left $\Delta_G$-equivariance) in the category $\D(N\backslash G/N)^T$ with the monoidal unit equipped with its canonical $T$-equivariance. In particular, $J^!\Psi(\delta_{\Delta_{G}})$ may be identified with $J^!(\delta_1)$, where $\delta_1 \in \D(N\backslash G/N)^T$ is the monoidal unit. Under the equivalence \cref{Main Monoidal Equivalences}, the sheaf $J^!(\delta_1)$  corresponds to the pushforward $\Delta_*^{\IndCoh}(\omega_{\LTd}/\characterlatticeforT)$. In particular, this sheaf is equivariant with respect to the diagonal $W$-action. Thus we see that $J^!\Psi(\delta_{\Delta_G}) \in \D(G/N \times G/N)_{\text{nondeg}}^{T, \Delta_G} \simeq \D(N\backslash G/N)^T_{\text{nondeg}}$ may be equipped with a canonical $W$-equivariant structure, showing (1). Point (2) follows directly from \cref{Summary of Companion Paper}(2).

To show (3), note that we have identifications
\raggedbottom
\begin{equation}\label{Computation of Integral Kernel of Averaging Psi by Gmalpha}\text{oblv}_{\langle r_{\alpha} \rangle}^WJ^!\Psi(\delta_{\Delta_{G}}) \simeq \text{oblv}_{\langle r_{\alpha} \rangle}^WJ^!\text{Av}_*^{\mathbb{G}_m^{\alpha}}\Psi(\delta_{\Delta_{G}})\end{equation}

\noindent since the quotient functor is $T \times T$-equivariant. Under the Mellin transform, the sheaf $\text{Av}_*^{\mathbb{G}_m^{\alpha}}\Psi(\delta_{\Delta_{G}}) \in \D(T/\mathbb{G}_m^{\alpha})$ corresponds to the monoidal unit. In particular, \labelcref{Computation of Integral Kernel of Averaging Psi by Gmalpha} gives that the integral kernel of the $G \times G$-equivariant functor given by \labelcref{Composite of TildePsi Forgetting and Averaging on Equivariant Category} lies in the full $G \times G$-subcategory indexed by the trivial representation, establishing (3). Finally, (4) is a special case of (3).
\end{proof}

\newcommand{\tildehc}{\tilde{hc}}
\begin{Remark}
Note that the integral kernel of the composite $ch \circ hc$ on the level of nondegenerate categories canonically acquires a $W$-representation structure, and the sheaf $ch \circ hc(\delta)$ is known to be the Springer sheaf. This sheaf has endomorphisms which may be identified with the group ring of $W$--for a recent survey of this, see \cite{BZCHN}. Therefore we expect that, at least on the level of nondegenerate categories, the functor $\tilde{hc}: \D(G)_{\text{nondeg}}^{G} \to \D(N\backslash G/N)^{T \rtimes W}_{\text{nondeg}}$ is fully faithful. 

Of course, one did not need to pass to nondegenerate categories to obtain that the composite $ch \circ hc$ is given by convolution with the Springer sheaf. Therefore, one might hope that the functor hc factors through some subcategory $\D \xhookrightarrow{}\D(N\backslash G/N)^{T}$ which acquires a $W$-action, giving rise to a fully faithful functor $\tilde{hc}: \D(G)^G \xhookrightarrow{} \D^W$. We do not yet know what to make of this. 
\end{Remark}

\subsection{Construction of $W$-Equivariance for Parabolic Restriction of Very Central Sheaves}\label{Construction of W-Equivariance for Parabolic Restriction of Very Central Sheaves}
We now use the computations on the nondegenerate horocycle functor above to prove \cref{Parabolic Restriction of a Very Central Sheaf in Heart Has W-Equivariant Structure Descending to Coarse Quotient}.

\begin{proof}[Proof of \cref{Parabolic Restriction of a Very Central Sheaf in Heart Has W-Equivariant Structure Descending to Coarse Quotient}]
Because $T$ is connected, the forgetful functor induces an exact equivalence of abelian categories
\raggedbottom
\[\text{oblv}^T: \D(T)^{T \rtimes W, \heartsuit} \xrightarrow{\sim} \D(T)^{W, \heartsuit}\]

\noindent and so, in particular, to prove \cref{Parabolic Restriction of a Very Central Sheaf in Heart Has W-Equivariant Structure Descending to Coarse Quotient}, it suffices to exhibit the required $W$-equivariant structure on $\text{oblv}^T\text{Res}(\F)$. Since $\F$ is very central, the canonical map $i_{\ast, dR}\Res(\F) \xrightarrow{} \horocycleFunctor(\F)$ is an isomorphism by definition. In particular, the canonical map 
\raggedbottom
\begin{equation}\label{Parabolic Restriction of Very Central Is Horocycle at Nondegenerate Level}
J^!i_{\ast, dR}\text{oblv}^T\Res(\F) \xrightarrow{} J^!\text{oblv}^T\horocycleFunctor(\F)
\end{equation}

\noindent is also an isomorphism. Then, by taking the left diagonal $G$ invariants of the functor $\tilde{\Psi}$ of \cref{Claims About Horocycle Functor as G x G Categorical Functor}, we see that we may equip $J^!\text{oblv}^T\horocycleFunctor(\F)$ with a $W$-equivariant structure, and thus we may also equip $J^!i_{\ast, dR}\text{oblv}^T\Res(\F)$ with a $W$-equivariant structure. Furthermore, the functor 
\raggedbottom
\[\D(N\backslash G/N)_{\text{nondeg}} \xrightarrow{\Avpsi} \D(N_{\psi}^-\backslash G/N)\]

\noindent given by \textit{left} Whittaker averaging is $W$-equivariant, where $W$ acts on the domain diagonally and the codomain via the usual $W$-action.\footnote{Under \cref{Main Monoidal Equivalences}, this corresponds to the fact that the projection map $\LTd/\characterlatticeforT \times_{\LTd\sslash \Wext} \LTd/\characterlatticeforT \to \LTd/\characterlatticeforT$ is $W$-equivariant, where $W$ acts diagonally on the product and acts by the standard way on $\LTd/\characterlatticeforT$.} We lightly abuse notation and denote the composite of $\Avpsi$ with the $W$-equivariant equivalence of \cref{SupportOfWhittakerSheaves} by $\Avpsi$. Since this functor is a map of $W$-categories, we obtain a $W$-equivariant structure on 
\raggedbottom
\begin{equation}\label{Parabolic Restriction Is Whittaker Averaging of Inclusion of Parabolic Restriction}\Avpsi(J^!i_{\ast, dR}\text{oblv}^T\Res(\F)) \simeq  \Avpsi(i_{\ast, dR}\text{oblv}^T\Res(\F)) \simeq \text{oblv}^T\Res(\F)
\end{equation}

\noindent where the first equivalence is given by the definition of nondegeneracy and the second is given by direct computation.

We now show that the sheaf $\mathcal{R} := \text{oblv}^T(\text{Res}(\mathcal{F}))$ descends to the coarse quotient when equipped with the $W$-equivariance above. To see this, by \cref{New Various Conditions for Wext Equivariant Sheaf to Satisfy Coxteter Descent}(3) it suffices to show that, for every simple coroot $\gamma$, the $\langle r_{\gamma} \rangle$-representation on $\text{Av}_{\ast}^{\mathbb{G}_m^{\gamma}}(\mathcal{R})$ is trivial, where $\mathbb{G}_m^{\gamma}$ acts on $\D(N_{\psi}^-\backslash G/N)$ the right. However, we see that we have $W$-equivariant equivalences
\raggedbottom
\[\text{Av}_{\ast}^{\mathbb{G}_m^{\gamma}}(\mathcal{R}) \simeq \text{Av}_{\ast}^{\mathbb{G}_m^{\gamma}}\Avpsi(J^!i_{\ast, dR}(\mathcal{R})) \simeq \Avpsi\text{Av}_{\ast}^{\mathbb{G}_m^{\gamma}}(J^!i_{\ast, dR}(\mathcal{R})) \simeq \Avpsi\text{Av}_{\ast}^{\mathbb{G}_m^{\gamma}}(\text{oblv}^TJ^!\horocycleFunctor(\F))\]

\noindent where the first equivalence follows from \labelcref{Parabolic Restriction Is Whittaker Averaging of Inclusion of Parabolic Restriction}, the second equivalence follows from the fact that $\Avpsi$ is right $T$-equivariant, and the third equivalence follows from \labelcref{Parabolic Restriction of Very Central Is Horocycle at Nondegenerate Level} and the fact that $J^!$ is $T \times T$-equivariant. We therefore see that we have a $W$-equivariant equivalence
\raggedbottom
\begin{equation}\label{Averaging of OblvTRes of Very Central is Averaging of Nondegenerage Horocycle W Equivariantly}
\text{Av}_{\ast}^{\mathbb{G}_m^{\gamma}}(\mathcal{R}) \simeq \Avpsi\text{oblv}^T\text{Av}_{\ast}^{\mathbb{G}_m^{\gamma}}(J^!\horocycleFunctor(\F))\end{equation}

\noindent by base changing along the Cartesian diagram of quotient maps
\raggedbottom
\begin{equation*}
  \xymatrix@R+2em@C+2em{
   X \ar[d] \ar[r] & X/\mathbb{G}_m^{\gamma} \ar[d] \\
  X/\Delta_T  \ar[r] & X/\Delta_T\mathbb{G}_m^{\gamma}
  }
 \end{equation*}

\noindent where $X := G/N \times G/N$ Now, by \cref{Claims About Horocycle Functor as G x G Categorical Functor}(3) we see that the $\langle r_{\gamma} \rangle$-representation on $\Avpsi\text{oblv}^T\text{Av}_{\ast}^{\mathbb{G}_m^{\gamma}}(J^!\horocycleFunctor(\F))$ is trivial. Thus by \labelcref{Averaging of OblvTRes of Very Central is Averaging of Nondegenerage Horocycle W Equivariantly} we obtain the $\langle r_{\gamma} \rangle$-representation on $\text{Av}_{\ast}^{\mathbb{G}_m^{\gamma}}(\mathcal{R})$ is trivial, as desired. 
\end{proof}

    \appendix

\section{Mellin Transform (With Germ\'an Stefanich)}\label{Mellin Transform Appendix}
In this appendix, written jointly with Germ\'an Stefanich, we discuss the foundations of the Mellin transform in the higher categorical setting we use above. In \cref{Derivation of Mellin Transorm Subsection}, we give the construction of the Mellin transform in the higher categorical setting, and we exhibit a functoriality of the Mellin transform we use above in \cref{Functoriality of Mellin Transform Subsection}. Finally, in \cref{Symmetric Monoidality Subsubsection}, we upgrade this Mellin transform to an equivalence of symmetric monoidal DG categories.
    \subsection{Derivation of Mellin Transform}\label{Derivation of Mellin Transorm Subsection}
Notice that the following diagram is Cartesian \begin{equation}\label{Group Action Cartesian Diagram for LTd and Character Action}
\xymatrix@R+2em@C+2em{\characterlatticeforT \times \LTd  \ar[r]^{\text{act}} \ar[d]^{\text{proj}} & \LTd \ar[d]^{\quotientMapFromLTdToQuotientByCharacterLattice}\\  \LTd \ar[r]^{\quotientMapFromLTdToQuotientByCharacterLattice} & \LTd/\characterlatticeforT
  }\end{equation} which shows that the map $\quotientMapFromLTdToQuotientByCharacterLattice$ is ind-proper. Therefore, the functor $\quotientMapFromLTdToQuotientByCharacterLattice^{\IndCoh}_*$ (defined via an identical procedure below \cite[Corollary 4.14]{GannonDescentToTheCoarseQuotientForPseudoreflectionAndAffineWeylGroups}) is left adjoint to $\quotientMapFromLTdToQuotientByCharacterLattice^{!}$. Moreover, since the functor $\quotientMapFromLTdToQuotientByCharacterLattice^{!}$ is conservative, the sheaf $\quotientMapFromLTdToQuotientByCharacterLattice_*^{\IndCoh}(\LTd)$ is a compact generator of $\IndCoh(\LTd/\characterlatticeforT)$ and therefore gives an equivalence of categories \begin{equation}\label{Compact Generator Equivalence of Categories Prelude to Mellin Transform}
       \IndCoh(\LTd/\characterlatticeforT) \simeq \uEnd_{\IndCoh(\LTd/\characterlatticeforT)}(\quotientMapFromLTdToQuotientByCharacterLattice_*^{\IndCoh}(\LTd))\text{-mod}.
  \end{equation}
  Notice that, as a graded vector space, by base changing along \labelcref{Group Action Cartesian Diagram for LTd and Character Action}, we may identify \[\uEnd_{\IndCoh(\LTd/\characterlatticeforT)}(\quotientMapFromLTdToQuotientByCharacterLattice_*^{\IndCoh}(\LTd)) \simeq \uHom_{\IndCoh(\LTd)}(\omega_{\LTd}, \text{proj}_*^{\IndCoh}(\omega_{\characterlatticeforT \times \LTd}))\] \[\simeq \oplus_{\characterlatticeforT}\uEnd_{\IndCoh(\LTd)}(\omega_{\LTd}) \simeq \oplus_{\characterlatticeforT}\Symt\] which is in particular a discrete vector space concentrated in a single cohomological degree. One may therefore check that this equivalence of graded vector spaces upgrades to an equivalence of $k$-algebras \[\uEnd_{\IndCoh(\LTd/\characterlatticeforT)}(\quotientMapFromLTdToQuotientByCharacterLattice_*^{\IndCoh}(\LTd)) \simeq \Gamma(\mathcal{D}_T)\] and so the equivalence given by \labelcref{Compact Generator Equivalence of Categories Prelude to Mellin Transform} yields equivalences \begin{equation}\label{Categories of Mellin Transform by Compact Generators Definition}\IndCoh(\LTd/\characterlatticeforT) \simeq \Gamma(\mathcal{D}_T)\text{-mod} \simeq \D(T)\end{equation} whose composite we denote by $\text{FMuk}_T$ and refer to as the \textit{Mellin transform}. When the associated torus $T$ is clear from context, we may also denote this transformation by $\text{FMuk}$. Equipping $\IndCoh(\LTd/\characterlatticeforT)$ with a $t$-structure as in \cite{GannonDescentToTheCoarseQuotientForPseudoreflectionAndAffineWeylGroups}, we have that $\quotientMapFromLTdToQuotientByCharacterLattice^{\IndCoh}_*(\omega_{\LTd})$ is concentrated in cohomological degree $-\text{dim}(T)$ and so the Mellin transform is $t$-exact up to cohomological shift.  

 \subsection{Functoriality of Mellin Transform}\label{Functoriality of Mellin Transform Subsection}
We will use the following fact:
\begin{Proposition}
    Assume $1 \to S \to T \xrightarrow{\phi} T/S \to 1$ is a short exact sequence of split algebraic tori. Then we have a canonical isomorphism of functors \begin{equation}\label{Functoriality of Mellin Transform}
\xymatrix@R+2em@C+2em{\D(T) \ar[r]^{\phi_{*, dR}} \ar[d]^{\text{FMuk}_T} & \D(T/S) \ar[d]^{\text{FMuk}_{T/S}}\\ \IndCoh(\LTd/\characterlatticeforT) \ar[r]^{\iota^!} & \IndCoh((\LT/\mathfrak{s})^*/X^{\bullet}(T/S))
  }
  \end{equation} where $\iota: (\LT/\mathfrak{s})^*/X^{\bullet}(T/S) \to \LTd/\characterlatticeforT$ is the induced map given by inclusion. 
\end{Proposition}

\begin{proof}
    The above short exact sequence splits, and therefore by induction we may assume that $S = \mathbb{G}_m$. In this case, $\phi_{*, dR}(\D_T)$ is equivalently given by the complex \[0 \to \Gamma(\D_T) \xrightarrow{\partial_S} \Gamma(\D_T) \to 0\] so that we may identify $\phi_{*, dR}(\D_T)$ as a direct sum of $\mathbb{Z}$-many copies of $\Gamma(\D_{T/S})$. We claim also that $(\text{FMuk}_{T/S}^{-1}\circ \iota^!\circ \text{FMuk}_T)(\D_T)$ maps to an isomorphic $\Gamma(\D_T)$-module; this can be checked explicitly since $\phi_{*, dR}(\D_T)$ lies in the heart of a $t$-structure. To see this, notice that the following diagram is Cartesian \begin{equation}\label{Cartesian Diagram for Functoriality of Mellin Transform}
\xymatrix@R+2em@C+2em{\LTd \times_{\mathfrak{s}^*} X^{\bullet}(S) \ar[r]^{\quotientMapFromLTdToQuotientByCharacterLattice} \ar[d] & (\LTd \times_{\mathfrak{s}^*} X^{\bullet}(S))/\characterlatticeforT \ar[d]\\  \LTd \ar[r]^{\quotientMapFromLTdToQuotientByCharacterLattice} & \LTd/\characterlatticeforT
  }\end{equation} where the unlabelled vertical arrows are induced by inclusion. Notice also that we have a canonical isomorphism $\LTd \times_{\mathfrak{s}^*} X^{\bullet}(S) \simeq (\mathfrak{t}/\mathfrak{s})^* \times X^{\bullet}(S)$ which is equivariant with respect to the action of $X^{\bullet}(T) \simeq X^{\bullet}(S) \times X^{\bullet}(T/S)$. This in particular shows that we have a Cartesian diagram  \begin{equation}
\xymatrix@R+2em@C+2em{(\mathfrak{t}/\mathfrak{s})^* \times X^{\bullet}(S) \ar[r] \ar[d] & (\mathfrak{t}/\mathfrak{s})^*/X^{\bullet}(T/S)\ar[d]\\  \LTd \ar[r]^{\quotientMapFromLTdToQuotientByCharacterLattice} & \LTd/\characterlatticeforT
  }\end{equation} and by base changing along this Cartesian diagram we obtain our desired claim. 
\end{proof} 

\subsection{Symmetric Monoidality}\label{Symmetric Monoidality Subsubsection}
The Mellin transform of \labelcref{Categories of Mellin Transform by Compact Generators Definition} provides an equivalence of categories $\D(T) \simeq \IndCoh(\LTd/\characterlatticeforT)$ which is $t$-exact up to a shift. This induces an equivalence of abelian categories, which moreover is well known to be symmetric monoidal--in the abelian categorical setting, this can be checked explicitly. On the other hand, in the higher categorical context, equipping a functor between symmetric monoidal $\infty$-categories with a symmetric monoidal structure requires an infinite amount of additional structure. The entirety of \cref{Symmetric Monoidality Subsubsection} is devoted to the following theorem, which provides this upgraded structure: 

\begin{Theorem}\label{theorem FM sym monoidal}
    The Mellin transform $\FourierMukai_T$ can be upgraded to an equivalence of symmetric monoidal categories with a $W$-action. 
\end{Theorem}

We will obtain \cref{theorem FM sym monoidal} as a consequence of a general uniqueness principle for symmetric monoidal structures on derived  $\infty$-categories. To formulate it we first need to introduce some notation.

\begin{Notation}
Denote by $\Groth$ the category of Grothendieck abelian categories and colimit preserving functors, and by $\Groth_\proj$ the subcategory of $\Groth$ on those Grothendieck abelian categories with enough projectives and functors which preserve projective objects. We denote by $\DGroth_\proj$ the category defined informally as follows:
\begin{itemize}
\item Objects of $\DGroth_\proj$ are derived categories of Grothendieck abelian categories with enough projectives.
\item A morphism $f: \ccal \rightarrow \dcal$ in $\DGroth_\proj$ is a colimit preserving functor that sends projective objects of $\ccal^\heartsuit$ to projective objects of $\dcal^\heartsuit$.
\end{itemize}
\end{Notation}

In addition to having morphisms, $\Groth_\proj$ has operations of  arity $n$ for any nonnegative integer $n$. Namely, for each finite family of source objects $\ccal_1, \ldots, \ccal_n$ and target object $\ccal$ an operation $\lbrace \ccal_1, \ldots, \ccal_n \rbrace \rightarrow \ccal$ is a functor
\[
f: \ccal_1 \times \ccal_2 \times \ldots \times \ccal_n \rightarrow \ccal
\]
with the following properties:
\begin{itemize}
\item $f$ is colimit preserving on each variable.
\item For each sequence of projective objects $X_i$ in $\ccal_i$ the object $f(X_1, \ldots, X_n)$ is projective.
\end{itemize}
We may summarize the situation by saying that $\Groth_\proj$ has the structure of an operad. 

Similarly, $\DGroth_\proj$ has the structure of an operad, where an operation $\lbrace \ccal_1, \ldots, \ccal_n \rbrace \rightarrow \ccal$ is a functor
\[
f: \ccal_1 \times \ccal_2 \times \ldots \times \ccal_n \rightarrow \ccal
\]
with the following properties:
\begin{itemize}
\item $f$ is colimit preserving on each variable.
\item For each sequence of projective objects $X_i$ in $\ccal^\heartsuit_i$ the object $f(X_1, \ldots, X_n)$ belongs to $\ccal^\heartsuit$ and is projective.
\end{itemize}

We note that given an operation $f$ in $\DGroth_\proj$ as above there is a corresponding operation $f^\heartsuit$ in $\Groth_\proj$  given by the following composition:
\[
 \ccal^\heartsuit_1 \times \ldots \times \ccal^\heartsuit_n \hookrightarrow \ccal^{\leq 0}_1 \times \ldots  \times \ccal^{\leq 0}_n \xrightarrow{f} \ccal^{\leq 0} \xrightarrow{H^0} \ccal^\heartsuit
\]
This forms part of a morphism of operads $\DGroth_\proj \rightarrow \Groth_\proj$. We are now ready to state the basic assertion that allows us to lift structures from the abelian setting to the derived setting:

\begin{Theorem}\label{theorem equivalence operads}
\hspace{1cm}
\begin{enumerate}[\normalfont (1)]
\item $\Groth_\proj$ and $\DGroth_\proj$ are symmetric monoidal categories.
\item The assignment $\ccal \mapsto \ccal^\heartsuit$ provides a symmetric monoidal equivalence $\DGroth_\proj = \Groth_\proj$. 
\end{enumerate}
\end{Theorem}

Before giving the proof of \cref{theorem equivalence operads}, we indicate how it can be used to deduce \cref{theorem FM sym monoidal}. In what follows it will be convenient to shift the t-structure on $\IndCoh(\LTd/\characterlatticeforT)$ so that $\FourierMukai_T$ becomes t-exact.

\begin{Lemma}\label{lemma tensor projectives}
    For for any pair of projective objects $\F, \mathcal{G} \in \D(T)^{\heartsuit}$  the convolution $\F \star \mathcal{G}$ is a projective object of $ \D(T)^{\heartsuit}$. Similarly, for any pair of projective objects $M, N \in \IndCoh(\LTd/\characterlatticeforT)^{\heartsuit}$, the tensor product $M \mathop{\otimes}\limits^{!} N$ is a  projective object of $ \IndCoh(\LTd/\characterlatticeforT)^{\heartsuit}$. 
\end{Lemma}

\begin{proof}

Because the following diagram commutes \begin{equation}\xymatrix@R+2em@C+2em{T \times T \ar[r] \ar[d] & T \ar[d]\\
 T_{dR} \times T_{dR} \ar[r] & T_{dR}
  }\end{equation} where the horizontal maps induced by multiplication and the vertical maps are the canonical maps, we see that in the category $\D(T)$, if $\mathcal{G}$ is the is the compact generator $(T \to T_{dR})_*^{\IndCoh}(\omega_T)$, then $\mathcal{G} \star \mathcal{G}$ is an infinite direct sum of copies of $\mathcal{G}$. The projection formula (\cite[Section 2.1.8]{GaRoII}) similarly implies that if $\mathcal{G}' := \quotientMapFromLTdToQuotientByCharacterLattice_*^{\IndCoh}(\omega_{\LTd})$ then we have an isomorphism of $\mathcal{G}' \mathop{\otimes}\limits^{!}\mathcal{G}'$ with \[\quotientMapFromLTdToQuotientByCharacterLattice_*^{\IndCoh}(\omega_{\LTd}) \mathop{\otimes}\limits^{!}\quotientMapFromLTdToQuotientByCharacterLattice_*^{\IndCoh}(\omega_{\LTd}) \simeq \quotientMapFromLTdToQuotientByCharacterLattice_*^{\IndCoh}(\omega_{\LTd}\mathop{\otimes}\limits^{!}\quotientMapFromLTdToQuotientByCharacterLattice^!\quotientMapFromLTdToQuotientByCharacterLattice_*^{\IndCoh}(\omega_{\LTd})) \simeq \quotientMapFromLTdToQuotientByCharacterLattice_*^{\IndCoh}\quotientMapFromLTdToQuotientByCharacterLattice^!\quotientMapFromLTdToQuotientByCharacterLattice_*^{\IndCoh}(\omega_{\LTd})\] and so $\mathcal{G}' \mathop{\otimes}\limits^{!}\mathcal{G}'$ is also an infinite direct sum of copies of $\mathcal{G}'$, by base change along the diagram \labelcref{Group Action Cartesian Diagram for LTd and Character Action}. Now the claims follow since both $\mathcal{G}$ and $\mathcal{G}'$ are projective generators of the associated abelian categories (which we can see by and \cite{GaiRozCrystals} and the fact that $\quotientMapFromLTdToQuotientByCharacterLattice^!$ is conservative by base change respectively) so any projective object is a direct summand of a direct sum of $\mathcal{G}$ or $\mathcal{G}'$ respectively.    
\end{proof}

\begin{proof}[Proof of \cref{theorem FM sym monoidal}]
It follows from \cref{lemma tensor projectives} that   $\D(T)$ is a nonunital commutative $\operatorname{Vect}$-algebra in $\DGroth_\proj$. Note that it has an action of $W$ induced from the action of $W$ on $T$. Similarly, $\IndCoh(\LTd/\characterlatticeforT)$ is also a  nonunital commutative $\operatorname{Vect}$-algebra in $\DGroth_\proj$ with an action of $W$. By \cref{theorem equivalence operads} the usual $W$-equivariant symmetric monoidal structure on $\FourierMukai_T^\heartsuit$ may be upgraded to a $W$-equivariant nonunital symmetric monoidal  structure on $\FourierMukai_T$. This equivalence admits a unique unital symmetric monoidal extension by virtue of \cite[Theorem 5.4.3.5]{LuHA}.
\end{proof}
 
We now turn to the proof of \cref{theorem equivalence operads}.

\begin{Notation}
For each object $\ccal$ of $\Groth_\proj$ we denote by $\ccal_\proj$ the full subcategory of $\ccal$ on the projective objects. 
\end{Notation}

\begin{Lemma}\label{lemma univ property derived}
Let $\ccal$ be an object of $\DGroth_\proj$. Then:
\begin{enumerate}[\normalfont (1)]
\item For every category with small colimits $\dcal$, restriction to $\ccal^\heartsuit_\proj$ provides an equivalence between the category  $\Funct^L(\ccal^{\leq 0}, \dcal)$ of colimit preserving functors $\ccal^{\leq 0} \rightarrow \dcal$ and the category $\Funct^\oplus(\ccal^\heartsuit_\proj, \dcal)$ of small coproduct preserving functors $\ccal^\heartsuit_\proj \rightarrow \dcal$.
\item For every stable category with small colimits $\dcal$, restriction to $\ccal^\heartsuit_\proj$ provides an equivalence $\Funct^L(\ccal, \dcal) = \Funct^\oplus(\ccal^\heartsuit_\proj, \dcal)$.
\end{enumerate}
\end{Lemma}
\begin{proof}
Part (2) follows from part (1) since $\ccal$ is the stabilization of $\ccal^{\leq 0}$. To prove part (1) we apply the results from \cite{DAGVIII} section 4.2. The category $\ccal^\heartsuit_\proj$ is a socle in the sense of definition 4.2.9. The lemma follows from a combination of corollary 4.2.14 and proposition 4.2.15. 
\end{proof}

\begin{Lemma}\label{lemma univ prop abelian}
Let $\ccal$ be an object of $\Groth_\proj$. Then for every $(1,1)$-category with small colimits $\dcal$, restriction to $\ccal_\proj$ provides an equivalence $\Funct^L(\ccal, \dcal) = \Funct^\oplus(\ccal_\proj, \dcal) $.
\end{Lemma}
\begin{proof}
Follows by applying part (1) of \cref{lemma univ property derived} to the derived category of $\ccal$.
\end{proof}

\begin{proof}[Proof of \cref{theorem equivalence operads}]
We first show that the assignment $\ccal \mapsto \ccal^\heartsuit$ is an equivalence of operads. Since it is surjective, it is enough to show that for every sequence $\ccal_1, \ldots, \ccal_n, \ccal$ of objects of $\DGroth_\proj$ passage to hearts induces an equivalence 
\[
\Hom_{\DGroth_\proj}(\lbrace \ccal_1, \ldots, \ccal_n \rbrace, \ccal) = \Hom_{\Groth_\proj}(\lbrace \ccal_1^\heartsuit, \ldots, \ccal^\heartsuit_n \rbrace, \ccal^\heartsuit).
\]
The case $n = 0$ is clear: in both cases a $0$-ary operation consists of a projective object of $\ccal^\heartsuit$. Assume now that $n > 0$. Then applying \cref{lemma univ prop abelian} to the case when $\dcal$ is the category of functors $\ccal^\heartsuit_2 \times \ldots \times \ccal^\heartsuit_n \rightarrow \ccal^\heartsuit$ which preserve colimits in each variable we see that restriction along the inclusion $(\ccal^\heartsuit_1)_\proj \rightarrow \ccal^\heartsuit_1$ provides an equivalence between $\Hom_{\Groth_\proj}(\lbrace \ccal_1^\heartsuit, \ldots, \ccal^\heartsuit_n \rbrace, \ccal^\heartsuit)$ and the space of functors
\[
f: (\ccal^\heartsuit_1)_\proj \times \ccal^\heartsuit_2 \times \ccal^\heartsuit_3 \times \ldots \times \ccal^\heartsuit_n \rightarrow \ccal^\heartsuit
\]
with the following properties:
\begin{itemize}
\item $f$ preserves coproducts in the first variable.
\item $f$ preserves colimits in the variables $2, \ldots, n$.
\item For every sequence of projective objects $X_i$ in $\ccal^\heartsuit_i$ the object $f(X_1, \ldots, X_n)$ is projective.
\end{itemize}
Applying this reasoning inductively we conclude that \[\Hom_{\Groth_\proj}(\lbrace \ccal_1^\heartsuit, \ldots, \ccal^\heartsuit_n \rbrace, \ccal^\heartsuit)\] is equivalent to the space of functors
\[
(\ccal^\heartsuit_1)_\proj \times (\ccal^\heartsuit_2)_\proj \times \ldots \times (\ccal^\heartsuit_n)_\proj \rightarrow \ccal^\heartsuit_\proj
\]
which preserve coproducts in each variable.

Similarly, an inductive application of part (2) of \cref{lemma univ property derived} shows that $\Hom_{\DGroth_\proj}(\lbrace \ccal_1, \ldots, \ccal_n \rbrace, \ccal) $ is also equivalent to the above space. This concludes the proof that the map $\DGroth_\proj \rightarrow \Groth_\proj$ is an equivalence of operads. It remains to show that these are symmetric monoidal categories.

Equip $\Groth$ with the structure of operad where an operation $\lbrace \ccal_1, \ldots, \ccal_n \rbrace \rightarrow \ccal$  is a functor
\[
f: \ccal_1 \times \ldots \times \ccal_n \rightarrow \ccal
\]
which preserves colimits in each variable. This is in fact a symmetric monoidal category, and the inclusion $\Groth \rightarrow \Pr^L$ preserves tensor products \cite[Corollary C.5.4.19]{SAG}. We may regard $\Groth_\proj$ as a suboperad of $\Groth$. The unit of $\Groth$ is the category of abelian groups, which belongs to $\Groth_\proj$. To finish the proof it will suffice to show that if $\ccal$ and $\dcal$ are objects of $\Groth_\proj$ then the tensor product $\ccal \otimes \dcal$ (computed in $\Groth$) is still a tensor product in $\Groth_\proj$. It is enough for this to prove that if $X$ and $Y$ are projective objects of $\ccal$ and $\dcal$ then $X \otimes Y$ is a projective object of $\ccal \otimes \dcal$. 

The Yoneda embedding provides an equivalence between $\ccal \otimes \dcal$ and the category of limit preserving functors from  $(\ccal \otimes \dcal)^\op$ into the category of sets. This is equivalent to the category of functors $\ccal^\op \times \dcal^\op \rightarrow  \operatorname{Set}$ which preserve limits in each variable.  Applying once again \cref{lemma univ prop abelian} we obtain an equivalence between $\ccal \otimes \dcal$ and the category $\Funct^\prod(\lbrace \ccal_\proj^\op, \dcal_\proj^\op \rbrace, \operatorname{Set})$ of functors $\ccal_\proj^\op \times \dcal_\proj^\op \rightarrow \operatorname{Set}$ which preserve products in each variable.  The functor $\ccal \otimes \dcal \rightarrow \operatorname{Set}$ corepresented by $X \otimes Y$ corresponds under this dictionary to the functor 
\[
\operatorname{ev}_{(X, Y)}: \Funct^{\prod}(\lbrace \ccal_\proj^\op, \dcal_\proj^\op \rbrace, \operatorname{Set}) \rightarrow \operatorname{Set}  
\]
of evaluation at $(X,Y)$. Our goal is to show that $\operatorname{ev}_{(X, Y)}$ preserves geometric realizations. To do so it suffices to prove that $ \Funct^{\prod}(\lbrace \ccal_\proj^\op, \dcal_\proj^\op \rbrace, \operatorname{Set})$ is closed under geometric realizations in the category $ \Funct(\lbrace \ccal_\proj^\op, \dcal_\proj^\op \rbrace, \operatorname{Set})$ of functors $\ccal_\proj^\op \times \dcal_\proj^\op \rightarrow \operatorname{Set}$.

Let $\operatorname{Ab}$ be the category of abelian groups. We have a commutative square of categories
\[
\begin{tikzcd}
\Funct^{\prod}(\lbrace \ccal_\proj^\op, \dcal_\proj^\op \rbrace, \operatorname{Ab})  \arrow{r}{} \arrow{d}{} & \Funct^{\prod}(\lbrace \ccal_\proj^\op, \dcal_\proj^\op \rbrace, \operatorname{Set}) \arrow{d}{} \\
\Funct(\lbrace \ccal_\proj^\op, \dcal_\proj^\op \rbrace, \operatorname{Ab})  \arrow{r}{}  & \Funct(\lbrace \ccal_\proj^\op, \dcal_\proj^\op \rbrace, \operatorname{Set})
\end{tikzcd}
\]
where the horizontal arrows are induced by the forgetful functor $\operatorname{Ab} \rightarrow \operatorname{Set}$ and the vertical arrows are the inclusions. The top horizontal arrow is an equivalence since $\ccal^\op_{\proj}$ and $\dcal^\op_\proj$ are additive. The bottom horizontal arrow preserves geometric realizations since these are preserved by the forgetful functor $\operatorname{Ab} \rightarrow \operatorname{Set}$. We may therefore reduce to showing that left vertical arrow preserves geometric realizations. This follows from the fact that products in $\operatorname{Ab}$ are exact.
\end{proof}

\printbibliography

    \end{document}